\newtheorem{theorem}{Theorem}[section]
\newtheorem{proposition}[theorem]{Proposition}
\newtheorem{lemma}[theorem]{Lemma}
\theoremstyle{definition}
\newtheorem{definition}[theorem]{Definition}
\newtheorem{assumption}[theorem]{Assumption}
\newtheorem{example}[theorem]{Example}
\theoremstyle{remark}
\newtheorem{remark}[theorem]{Remark}
\crefname{theorem}{Theorem}{Theorems}
\Crefname{theorem}{Theorem}{Theorems}
\crefname{assumption}{Assumption}{Assumptions}
\Crefname{assumption}{Assumption}{Assumptions}
\crefname{lemma}{Lemma}{Lemmas}
\Crefname{lemma}{Lemma}{Lemmas}
\crefname{definition}{Definition}{Definitions}
\Crefname{definition}{Definition}{Definitions}
\crefname{proposition}{Proposition}{Propositions}
\Crefname{proposition}{Proposition}{Propositions}
\crefname{algorithm}{Algorithm}{Algorithms}
\Crefname{algorithm}{Algorithm}{Algorithms}
\crefname{section}{Section}{Sections}
\Crefname{section}{Section}{Sections}
\crefname{appendix}{Appendix}{Appendices}
\Crefname{appendix}{Appendix}{Appendices}
\DeclareMathOperator*{\argmin}{arg\,min}
\DeclareMathOperator*{\dist}{dist}
\DeclareMathOperator*{\cvxflag}{cvx\_flag}
\newcommand{\N}{\mathbb{N}}
\newcommand{\R}{\mathbb{R}}
\newcommand{\Ha}{\mathcal{H}}
\newcommand{\st}{\operatorname{subject\ to}}
\DeclareMathOperator*{\TV}{TV}
\DeclareMathOperator*{\pred}{pred}
\DeclareMathOperator*{\ared}{ared}
\DeclareMathOperator*{\BV}{BV}
\DeclareMathOperator*{\supp}{supp}
\DeclareMathOperator*{\dvg}{div}
\DeclareMathOperator*{\diam}{diam}
\newcommand*\dd{\mathop{}\!\mathrm{d}}
\newcommand{\weakto}{\rightharpoonup}
\newcommand{\bdvg}[1]{\dvg{}_{{\hspace*{-2pt}#1\hspace*{2pt}}}}
\pgfplotsset{compat=1.7}
\newcommand{\ds}{\, \textup{d} s }
\newcommand{\dx}{\, \textup{d} x}
\newcommand{\dxs}{\, \textup{d}x\textup{d}s}
\newcommand{\intO}{\int_{\Omega}}
\newcommand{\Ltwo}{L^2(\Omega)}
\newcommand{\Linf}{L^\infty(\Omega)}
\newcommand{\Hone}{H^1(\Omega)}
\newcommand{\Hneg}{H^{1}(\Omega)^*}
\newcommand{\ud}{u_{\textup{d}}}
\newcommand{\calM}{\mathcal{M}}
\newcommand{\calU}{\mathcal{U}}
\newcommand{\Wad}{W_{\textup{ad}}}
\def\tw{\tilde{w}}
\def\ow{\overline{w}}
\def\uw{\underline{w}}
\newcommand{\Wadzero}{W_{\textup{ad}}^0}
\newcommand{\Wadeps}{W_{\textup{ad}}^{\varepsilon}}
\newcommand{\Rgrad}{\nabla_{{\hspace*{-2pt}R\hspace*{2pt}}}}
\newcommand{\symdiff}{\mathbin{\triangle}}
\numberwithin{equation}{section}
\title{Homotopy trust-region method for phase-field
	approximations in perimeter-regularized
	binary optimal control\footnote{Paul Manns acknowledges funding by Deutsche Forschungsgemeinschaft
	through research grant MA 10080/2-1.}}
\author[1]{Paul Manns}
\author[2]{Vanja Nikoli\'{c}}
\affil[1]{Faculty of Mathematics, TU Dortmund University, 44227 Dortmund, Germany, \textit{paul.manns@tu-dortmund.de}}
\affil[2]{Department of Mathematics, Radboud University, 6525 AJ Nijmegen, The Netherlands, \textit{vanja.nikolic@ru.nl}}
\begin{document}
\maketitle
	
\begin{abstract}
We consider optimal control problems that have binary-valued control
input functions and a perimeter regularization. We develop and analyze
a trust-region algorithm that solves a sequence of subproblems in which
the regularization term and the binarity constraint are relaxed
by a non-convex energy functional. 

We show how the parameter that controls the distinctiveness of the
resulting phase field can be coupled to the trust-region radius updates and be driven to zero over the course
of the iterations in order to obtain convergence to 
points that satisfy a first-order optimality
condition of the limit problem under suitable regularity
assumptions.

Finally, we highlight and discuss the assumptions and restrictions of our
approach and provide the first computational results for a motivating 
application in the field of control of acoustic waves in dissipative
media.
\end{abstract}
\textbf{2020 Mathematics Subject Classification.} 49M05,49M20,49Q15

\section{Introduction}

Let $\Omega \subset \R^d$ with $d \in \{2,3\}$ be a bounded
Lipschitz domain. We are interested in solving
optimal control problems of the following form:
\begin{gather}\label{eq:p} \tag{$\textup{P}$}
\left\{ \begin{aligned}
&\hspace*{2cm}\min_{u, w}\ J(u) + \gamma C_0 P_{\Omega}(w^{-1}(\{1\})),\\
&\text{with} \\ 
&\hspace*{2cm}w  \in \Wadzero \coloneqq
\BV(\Omega, \{0,1\}),\quad u \in \calU,
\\
&\text{subject to }\\
&\hspace*{2cm}u = S(w).
\end{aligned}
\right.
\end{gather}
In \eqref{eq:p}, $J$ is the principal part of the objective
and depends on $u\in \calU$, where $\calU$ is the state space
of an underlying (initial) boundary-value problem for a partial differential equation (PDE). Further, $w$ is the control input of the PDE and may only
attain the values $0$ and $1$, which is reflected by
our notation $\BV(\Omega, \{0,1\})$ for the space
of functions of bounded variation on $\Omega$
that are $\{0,1\}$-valued almost everywhere (a.e.).

We assume that the PDE is uniquely solvable for any admissible control function $w$
and we denote the solution map by $S : \Wadzero \to \calU$.
The optimization problem is regularized with the
term $\gamma C_0 P_{\Omega}(w^{-1}(\{1\}))$
with positive constants $C_0 > 0$ and $\gamma > 0$
and the perimeter of the level set $w^{-1}(\{1\})$
in $\Omega$, that is, the function
$P_{\Omega}$ gives the perimeter of a set
in $\Omega$. We recall that for our binary setting,
$P_{\Omega}(w^{-1}(\{1\})) = \TV(w)$ if
$\TV : L^1(\Omega) \to [0,\infty]$ is the total
variation seminorm on $\Omega$.

Optimization problems of the form \eqref{eq:p} arise, for example, in acoustics in the context of optimal focusing of 
ultrasound waves and are particularly relevant in acoustic imaging~\cite{szabo2004diagnostic} and  applications of high-intensity ultrasound
waves~\cite{kennedy2005high}. Focusing can be achieved through the use of acoustic lenses, which represent regions with different acoustic material properties (e.g., the speed of sound).  
Thus, problems of optimal focusing can generally be treated with algorithmic approaches from  optimal control and shape and topology optimization.
Excellent overviews on such methods can be found in \cite{allaire2021shape, manzoni2021optimal}. We refer to~\cite{clason2021optimal} for a rigorous study of an optimal control problem for a linear wave equation with the control in the propagation speed. We also mention the works \cite{tran2017shape,garcke2022phase},
which use a phase-field method to solve the problem of optimal acoustic focusing by reframing it as an optimal control problem with the control in the phase-field function.

In this work, we combine a trust-region algorithm, see \cite{manns2022on,yan2021discrete,hu2022adaptive}
for recent work on trust-region algorithms in topology optimization, with ideas from phase-field models.
Instead of replacing \eqref{eq:p} with a phase-field model, we replace the perimeter functional
$C_0P(w^{-1}(\{1\}))$ by the Ginzburg--Landau energy $E_\varepsilon$ in the subproblems generated by the
trust-region algorithm and drive
$\varepsilon \searrow 0$, where $\varepsilon$ controls the distinctiveness or non-binarity
of the phase field. More precisely, for $\varepsilon > 0$, $E_\varepsilon$ is defined by
\begin{gather*}
E_\varepsilon(w) \coloneqq \left\{
\begin{aligned}
&\int_\Omega \frac{\varepsilon}{2}|\nabla w|^2 + \frac{1}{\varepsilon} \Psi(w)\dd x 
	&& \text{ if } w \in H^1(\Omega),\\
	&\infty 
	&& \text{ else,}
\end{aligned}\right.
\end{gather*}
where $\Psi$ is the superposition operator that is defined 
for all $w \in L^1(\Omega)$ and almost every (a.e{e}.) $x \in \Omega$ by
\[ \Psi(w)(x) \coloneqq \left\{
\begin{aligned}
&w(x)(1 - w(x)) &&\text{ if }w(x) \in [0,1], \\
&\infty &&\text{ else.}
\end{aligned}
\right.
\]
Consequently, the feasible set of our subproblems is relaxed to
$\Wadeps \coloneqq H^1(\Omega;[0,1])$, the space of weakly 
differentiable $L^2(\Omega)$-functions with weak derivative in 
$L^2(\Omega)$ that are $[0,1]$-valued a.e.

To the best of our knowledge, there is not much work on the question of how to drive $\varepsilon \searrow 0$
in optimization algorithms in order to obtain beneficial properties for the limit. We start from the recent
work on a trust-region algorithm \cite{leyffer2022sequential,manns2022on} that allows for non-trivial geometric
and thus topological changes of the level sets of the control functions. We study a 
modification of this approach, where the perimeter regularization is replaced by the 
Ginzburg--Landau energy and $\varepsilon$ is driven to zero over the course of the 
iterations. In this way, the smoothness allows for a relatively straightforward
consistent discretization of the regularization term. In contrast, a consistent 
numerical analysis for the subproblems proposed in \cite{manns2022on} is difficult 
because established strategies for the discretization of the perimeter regularizer, 
see, for example, \cite{caillaud2022error,chambolle2021approximating,bartels2012total}, are not
directly applicable, as they use convex duality and that the projection of controls 
to piecewise constant functions on a given grid yields a feasible function.
Both properties are not available for \eqref{eq:p}
or induced subproblems that do not relax the binarity constraint as in 
\cite{manns2022on}.

\subsection{Contributions}

We develop a homotopy trust-region algorithm that solves subproblems, in which
the main part of the objective $J \circ S$ is linearized and the regularizer is approximated with the
Ginzburg--Landau energy functional $E_\varepsilon$. We assume that the trust-region subproblems can be
solved exactly and limit points of our iterates admit a certain regularity condition, see the assumptions
of \cref{thm:gamma_to_stat}, which can be
interpreted as a constraint qualification when relating the approach to nonlinear programming algorithms. 
Under these conditions, we prove convergence of subsequences of the iterates to so-called L-stationary points of
the limit problem \eqref{eq:p}, where L-stationarity is the first-order optimality condition
from \cite{manns2022on}.

Our proof uses a compactness result that requires boundedness of the Ginzburg--Landau energies
for $\varepsilon \searrow 0$ for (a subsequence of) the iterations of our algorithm. The
boundedness is not trivial to establish because the Ginzburg--Landau energy may increase when
$\varepsilon$ is reduced. We establish this property by showing how to construct a suitable competitor
that eventually becomes feasible for the trust-region subproblems and then implies reductions of the
objective whenever the Ginzburg--Landau energy exceeds a certain threshold before the next 
reduction of $\varepsilon$ is triggered.

Since our trust-region subproblems are non-convex, we also show how to integrate convex subproblems' solves
into the algorithm in order to accelerate it, while our convergence analysis only
hinges on the non-convex solves so far. This is due to the fact that we do not know how Cauchy points,
which usually guide the convergence analysis of trust-region algorithms, could be constructed from
the purely local information that is given by the
violation of L-stationarity condition.

Consequently, our convergence analysis requires three relatively strong assumptions, which open up
ample possibilities for future research on their alleviation and handling and thus improving the
analyzed algorithm. First, we need to solve a class of non-convex subproblems to global optimality
in order to guarantee the asymptotic properties. Second, we need an additional regularity condition
(constraint qualification) on the (reduced) boundaries $\partial^* \bar{w}^{-1}(\{1\})$ of the level
sets of our limit points. Third, the regularity assumptions on $S$ that are required by the algorithm
currently have a gap to the regularity that we can assert for our motivating application.
While we focus on
the functionals $E_\varepsilon$ introduced above in this work,
we believe that our results can be carried over to other
families of energy functionals that approximate the perimeter functional,
see, for example, \cite{amstutz2012topology,amstutz2022consistent}.

\subsection{Structure of the remainder}

After providing some guidance on our notation, we formally introduce our problem and recap
the L-stationarity concept from \cite{manns2022on} in \cref{sec:abstract}. We prove the L-stationarity
in slightly more generality than is done in \cite{manns2022on} so that only the first derivatives are involved.
We then introduce and describe the analyzed algorithm as well as its subproblems in 
\cref{sec:homotopy_trust_region_algorithm}, where we also provide a $\Gamma$-convergence type result
for the trust-region subproblems when driving $\varepsilon \searrow 0$. In \cref{sec:asymptotics},
we show our main results, the asymptotic boundedness and 
L-stationarity of subsequences of our iterates.
In \cref{sec:applications}, we verify our regularity assumptions on $S$ for an elliptic PDE and show
how far we can currently get for a linear acoustic equation when the design variable enters the propagation speed coefficient.
Moreover, we provide a discretization of the acoustic equation and show some preliminary computational
results of our algorithm in this case, where we resort to convex subproblem solves.
Finally, we provide a brief conclusion in \cref{sec:conclusion}.

\subsection{Notation and assumptions}
We assume throughout that $\Omega \subset \R^d$ is a bounded Lipschitz domain. In the context of acoustic
waves, $T>0$ denotes the final propagation time, which is given and fixed. We use $H^1(\Omega)^*$ to denote
the dual of $H^1(\Omega)$. When denoting norms on Bochner spaces, we often omit the temporal domain; that is, 
$\|\cdot\|_{L^p(L^q(\Omega))}$ denotes the norm on $L^p(0,T; L^q(\Omega))$. Note that we also replace
$\|\cdot\|_{L^p(\Omega)}$ by $\|\cdot\|_{L^p}$ and $\|\cdot\|_{H^1(\Omega)}$ by $\|\cdot\|_{H^1}$ in several
of our proofs when there are no ambiguities in the interest of a clean and
compact presentation.
We use $x \lesssim y$ to denote $x \leq C y$,
where $C>0$ is a generic constant. For a set of finite 
perimeter (Caccioppoli set) $E$, we denote its reduced 
boundary by $\partial^* E$.
We recall that for a Caccioppoli set $E$
with $\{0,1\}$-valued indicator function $\chi_E$,
the reduced boundary $\partial^* E$ is given as
the set of points $x \in \supp D\chi_E$ such that
\[ \nu_E(x) \coloneqq
\lim_{r \searrow 0} \frac{D\chi_E(B_r(x))}{|D\chi_E|(B_r(x))}
\]
exists and satisfies $\|\nu_E(x)\| = 1$. Here, $D\chi_E$
denotes the weak derivative of $\chi_E$.
We refer to Section 15 in \cite{maggi2012sets} for more details.
For a set $A$, $\delta_A$ denotes its $\{0,\infty\}$-valued
indicator function. We write $\Ha^{d-1}$ for the
$d-1$-dimensional Hausdorff measure on $\R^d$.

\section{Abstract problem formulation and optimality conditions}\label{sec:abstract}
For the statement and analysis of the asymptotics of our 
algorithm, we replace $J(u)$ and $u = S(w)$ in \eqref{eq:p}
by the reduced objective $j(w) \coloneqq J(S(w))$,
which gives the following abstract and handy problem formulation
for $C_0 > 0$:
\begin{gather}\label{eq:p_abstract}
\begin{aligned}
\min_{w \in L^2(\Omega)}\ & j(w) + \gamma C_0P_\Omega(w^{-1}(\{1\})) \\
\text{s.t.}\quad & w(x) \in \{0,1\} \text{ for a.e.\ } x\in \Omega.
\end{aligned}\tag{PA}
\end{gather}
The following existence result for minimizers is well known.
\begin{proposition}
Let $j : L^2(\Omega) \to \R$ be lower semi-continuous
and bounded below. Then \eqref{eq:p_abstract} admits
a minimizer.
\end{proposition}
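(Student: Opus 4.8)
The plan is to apply the direct method of the calculus of variations, exploiting the fact—recalled in the excerpt—that on the feasible set the perimeter term coincides with the total variation seminorm, $P_\Omega(w^{-1}(\{1\})) = \TV(w)$. First I would check that the infimum is finite: the choice $w \equiv 0$ is feasible with $w^{-1}(\{1\}) = \emptyset$, hence vanishing perimeter and finite value $j(0) \in \R$, while the objective is bounded below since $j$ is bounded below and the perimeter term is nonnegative. Let $(w_n)_n$ be a minimizing sequence. Along it the objective values are bounded above, so $\TV(w_n)$ is bounded. Since each $w_n$ is $\{0,1\}$-valued on the bounded domain $\Omega$, we also have $\|w_n\|_{L^1} \leq |\Omega|$, so that $(w_n)_n$ is bounded in $\BV(\Omega)$.

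Next I would invoke the compact embedding of $\BV(\Omega)$ into $L^1(\Omega)$, valid since $\Omega$ is a bounded Lipschitz domain, to extract a subsequence (not relabeled) converging to some $\bar{w}$ strongly in $L^1(\Omega)$. Passing to a further subsequence, the convergence holds pointwise a.e., so $\bar{w}(x) \in \{0,1\}$ for a.a.\ $x \in \Omega$, and thus $\bar{w}$ is feasible for \eqref{eq:p_abstract}.

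The point that reconciles the two topologies is that for $\{0,1\}$-valued functions one has $|w_n - \bar{w}|^2 = |w_n - \bar{w}|$ pointwise a.e., since the difference takes values in $\{-1,0,1\}$. Consequently $\|w_n - \bar{w}\|_{L^2}^2 = \|w_n - \bar{w}\|_{L^1} \to 0$, so the $L^1$-convergence upgrades to strong convergence in $L^2(\Omega)$. This lets me use the assumed $L^2$-lower semicontinuity of $j$ to get $j(\bar{w}) \leq \liminf_n j(w_n)$. Combining this with the lower semicontinuity of $\TV$ with respect to $L^1$-convergence and the superadditivity of $\liminf$ yields
\[
j(\bar{w}) + \gamma C_0 \TV(\bar{w}) \leq \liminf_n \big( j(w_n) + \gamma C_0 \TV(w_n) \big),
\]
so that $\bar{w}$ attains the infimum and is the desired minimizer.

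The main obstacle is precisely the mismatch between the hypothesis—lower semicontinuity of $j$ in the $L^2$-topology—and the natural compactness for $\BV$-bounded sequences, which only furnishes $L^1$-convergence. The resolution is the elementary but essential identity $\|w_n - \bar{w}\|_{L^2}^2 = \|w_n - \bar{w}\|_{L^1}$ for binary functions, which turns $L^1$-convergence to a binary limit into $L^2$-convergence; the remaining steps are a routine application of the direct method.
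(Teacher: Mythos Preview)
Your proof is correct and follows the same route as the paper's: the direct method of the calculus of variations, with compactness coming from the $\BV$ bound on a minimizing sequence and the compact embedding $\BV(\Omega)\hookrightarrow L^1(\Omega)$. You spell out one point the paper leaves implicit, namely the upgrade from $L^1$- to $L^2$-convergence via $|w_n-\bar w|^2=|w_n-\bar w|$ for $\{0,1\}$-valued functions, which is exactly what is needed to invoke the assumed $L^2$-lower semicontinuity of $j$.
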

\begin{proof}
The claim follows from the direct method of calculus of
variations, where the necessary compactness comes from
the fact that bounded subsets of $\Wadzero$ (with respect
to the perimeter functional) are compact in $L^1(\Omega)$,
see, for instance,
\cite{leyffer2022sequential,burger2012exact}.
\end{proof}
With the help of local variations (smooth perturbations of the essential boundary)
of the level sets of minimizers of \eqref{eq:p_abstract}, one can
prove first-order optimality conditions for \eqref{eq:p_abstract}. To this end, it is necessary
to bound the remainder term of the Taylor expansion of the perimeter functional
\cite{leyffer2022sequential,manns2022on,maggi2012sets,marko2022integer}, 
which is possible, for example, by means of the following
assumption.
\begin{assumption}[Relaxation of Assumption 4.3 in \cite{manns2022on}]\label{ass:general_var}
	Let $j : L^1(\Omega) \to \R$ be continuously Fr\'{e}chet differentiable.
\end{assumption}
As in \cite{leyffer2022sequential,manns2022on}, we note that
the requirement that the differentiability is with respect to the
$L^1(\Omega)$-norm on the domain in \cref{ass:general_var} implies
a regularity enhancement inside $F$. This is a typical assumption
in the presence of binary- or discrete-valued control functions
because the linear part of Taylor's expansion of $F$ can only
be estimated with respect to the $L^1$-norm difference and
the $L^1$-norm difference is the same as the squared $L^2$-norm
for binary-valued functions.
We introduce the tailored concept of
first-order optimality below.
\begin{proposition}\label{prp:stationarity}
	Let \cref{ass:general_var} hold and $\Rgrad j(\bar{w}) \in C(\bar{\Omega})$
	for some $\bar{w}\in \Wadzero$.
	If $\bar{w}$ is a local minimizer, that is
	\[j(\bar{w}) + \gamma C_0P_\Omega(\bar{w}^{-1}(\{1\})) \le j(w) + \gamma C_0P_\Omega(w^{-1}(\{1\}))\]
	for all $w \in \Wadzero$ with $\|w - \bar{w}\|_{L^1(\Omega)} \le r$ for some $r > 0$, 
	then $\bar{w}$ and $E = \bar{w}^{-1}(\{1\})$ satisfy
\begin{gather}\label{eq:p_variational_principle}
\int_{\partial^*E \cap \Omega}
-\Rgrad j(\bar{w})(\phi\cdot n_{E}) \dd \Ha^{d-1} = 
\gamma C_0 \int_{\partial^* E \cap \Omega}\bdvg{E} \phi\dd \Ha^{d-1}
\end{gather}	
for all $\phi \in C_c^\infty(\Omega, \R^d)$, where
$\bdvg{E} \phi: \partial^*E \to \R$ is the 
\emph{boundary divergence of} $\phi$ on $E$, that is
\[\bdvg{E}\phi(x) \coloneqq \dvg \phi(x) - n_E(x) \cdot \nabla \phi(x)n_E(x)\]
for $x \in \partial^* E$, $n_E$ is the outer normal on $\partial^* E$,
and $\Rgrad j(\bar{w}) \in L^2(\Omega)$ is the Riesz representative of $j'(\bar{w})$.
\end{proposition}
\begin{proof}
	We closely follow the arguments in \cite{manns2022on}. Let
	the local variation $(f_t)_{t\in(-\varepsilon,\varepsilon)}$
	be defined by
	$f_t \coloneqq I + t \phi$ for a smooth and compactly supported vector field
	$\phi \in C_c^\infty(\Omega; \R^d)$.
	Here, $\varepsilon > 0$ is small enough such that $(f_t)_{t\in(-\varepsilon,\varepsilon)}$ is a family
	of diffeomorphisms. Moreover, $\{ x \in \R^d\,|\, f_t(x) \neq x\} \subset K$ holds for some compact set $K \subset \Omega$
	and all $t \in (-\varepsilon, \varepsilon)$, see \cite[Prop.\,3.2]{manns2022on}.
	We define the induced transformation of the $\{0,1\}$-valued function $\bar{w}$ by means of $f_t$ as
	\[f_{t}^{\#}\bar{w} \coloneqq \chi_{f_t(\bar{w}^{-1}(\{1\}))}
	= \left\{
		\begin{aligned}
		1 & \text{ if } x \in f_t(\bar{w}^{-1}(\{1\})) \\
		0 & \text{ else }
		\end{aligned}
	\right.
	 \]
	for $t \in (-\varepsilon, \varepsilon)$. This means that $f_{t}^{\#}\bar{w}$ is the $\{0,1\}$-valued
	function whose level sets are given by the images of the level sets of $\bar{w}$ under $f_t$. In other words,
	we obtain a corresponding perturbation of the level sets of $\bar{w}$, which translates
	to a perturbation of the interface between the level sets. Our condition \eqref{eq:p_variational_principle} arises below
	from local optimality because no such perturbation can be able to improve the objective.
	
	Specifically, the function $t \mapsto J(f_{t}^{\#}\bar{w}) + \gamma C_0 P_\Omega((f_{t}^{\#}\bar{w})^{-1}(\{1\}))$ is differentiable at
	$t = 0$, the first-order optimality condition
	\[  \frac{\dd}{\dd t}J(f_{t}^{\#}\bar{w}) + \gamma C_0 P_\Omega((f_{t}^{\#}\bar{w})^{-1}(\{1\}))\Big|_{t = 0} = 0 \]
	holds by virtue of Fermat's theorem, the assumed optimality of $\bar{w}$ in an $L^1$-ball of radius $r$, and the estimate
	\begin{gather}\label{eq:L1_bound}
	\|f_t^{\#}\bar{w} - \bar{w}\|_{L^1} \le \bar{C} |t| P_\Omega((f_{t}^{\#}\bar{w})^{-1}(\{1\}))
	\end{gather}
	that holds for some $\varepsilon_0 > 0$ and $\bar{C} > 0$ for all $t \in (-\varepsilon_0,\varepsilon_0)$. Estimate \eqref{eq:L1_bound} is shown in
	Lemma 3.8 in \cite{manns2022on} (use $\TV(f_{t}^{\#}\bar{w}) = P_\Omega((f_{t}^{\#}\bar{w})^{-1}(\{1\}))$).
	
	The Fr\'{e}chet differentiability of $J$ implies
	\begin{align*}
	\frac{\dd}{\dd t} J(f_t^{\#}\bar{w})\Big|_{t = 0}
	= \lim_{t \searrow 0}\frac{(\nabla_R J(\bar{w}), f_t^{\#}\bar{w} - \bar{w})_{L^2}}{t}
	+ \frac{o(\|f_t^{\#}\bar{w} - \bar{w}\|_{L^1})}{t}
	= \frac{\dd}{\dd t} (\nabla J(v), f_t^{\#}\bar{w})_{L^2}\Big|_{t = 0},
	\end{align*}
	where the second identity follows from the estimate \eqref{eq:L1_bound}. In combination
	with Theorem 17.5 in \cite{maggi2012sets},
	the function
	$t \mapsto J(f_{t}^{\#}\bar{w}) + \gamma C_0 P_\Omega((f_{t}^{\#}\bar{w})^{-1}(\{1\}))$
	is differentiable at $t = 0$.
	
	Consequently, we obtain
	\begin{gather}\label{eq:derivative_to_derivative}
	\frac{\dd}{\dd t} J(f_t^{\#}\bar{w})\Big|_{t = 0}
	= \frac{\dd}{\dd t}
	(\nabla J(v), f_t^{\#}\bar{w})_{L^2}\Big|_{t = 0}
	\end{gather}
	and
	the claim follows from Lemmas 3.3 and 3.5 in \cite{manns2022on}.
\end{proof}

\begin{definition}[Definition 4.4 in \cite{manns2022on}]\label{dfn:stationarity}
Let \cref{ass:general_var} hold and $\Rgrad j(\bar{w}) \in C(\bar{\Omega})$	for some $\bar{w}\in \Wadzero$.
Then we call $\bar{w}$ \emph{L-stationary} if it satisfies
\eqref{eq:p_variational_principle}.
\end{definition}
L-stationarity is a first-order condition for local optimality for 
\eqref{eq:p_abstract} at points, where the Riesz representative 
$\Rgrad j(w)$ is also a continuous function, which we recall in
the following proposition.
\eqref{eq:p_variational_principle} means that the negative gradient 
of the first part of the objective is the distributional mean 
curvature of the boundary of the level set $w^{-1}(\{1\})$ after
scaling by $\gamma^{-1}C_0^{-1}$, see Remark 17.7 in 
\cite{maggi2012sets}. Using the
\emph{second variation of the perimeter}, a stronger optimality
condition for \eqref{eq:p} that involves the
\emph{principal curvatures} of the boundaries of the level sets
can likely be shown under the assumption of additional boundary 
regularity at the local minimizer. In the case $\Omega \subset \R$,
which is not covered in this work, L-stationarity was
improved to a second-order type optimality condition
under the assumption that $\nabla_R j(\bar{w})$ is
in $C^1$ in \cite[\S3.2]{marko2022integer}.

\section{Homotopy trust-region algorithm}\label{sec:homotopy_trust_region_algorithm}
We develop a trust-region algorithm for the optimization of 
\eqref{eq:p}, where we follow the abstract algorithm design in 
\cite{leyffer2022sequential,manns2022on}.
We introduce trust-region subproblems for the sharp interface
problem and the phase-field approximation in \cref{sec:subproblems}, 
which are related to each other with a result of
$\Gamma$-convergence-type. Then, we introduce the algorithm that 
solves trust-region subproblems and drives the interface parameter
to zero over the course of its iterations in 
\cref{sec:algorithm_statement}.

\subsection{Trust-region subproblems}\label{sec:subproblems}
We consider three trust-region subproblems. The first arises from the sharp-interface formulation:
\begin{gather}\label{eq:tr_sharp}
\text{{\ref{eq:tr_sharp}}}(\bar{w}, g, \Delta) \coloneqq
\left\{
\begin{aligned}
\min_{w \in L^2(\Omega)}\ & (g, w - \bar{w})_{L^2(\Omega)} + \gamma C_0  P_\Omega(w^{-1}(\{1\}))
-\gamma C_0  P_\Omega(\bar{w}^{-1}(\{1\}))\\
\text{s.t.}\quad & \|w - \bar{w}\|_{L^1(\Omega)} \le \Delta\text{ and }w(x) \in \{0,1\} \text{ for a.e.\ } x \in \Omega.
\end{aligned}
\right.
\tag{TR}
\end{gather}
The inputs of \eqref{eq:tr_sharp} are a (partial) linearization point $\bar{w}$, a function $g$, typically the gradient
of the reduced objective $\Rgrad j(\bar{w})$ or an approximation, and a trust-region radius $\Delta$.
This subproblem is analyzed in \cite{leyffer2022sequential,manns2022on} (choose $V = \{0,1\}$, $\alpha = \gamma$,
and $C_0 = 1$ therein and observe that $\TV(w) = P_\Omega(w^{-1}(\{1\}))$. A distinct feature of
\eqref{eq:tr_sharp} is that the regularization
term is considered exactly and not replaced by a model function. \\
\indent After discretization, an instance of class \eqref{eq:tr_sharp} becomes a finite-dimensional integer linear
program \cite{leyffer2022sequential}. It is, however, not known if the discretizations can be
solved efficiently for $d \ge 2$. Moreover, binary-valued functions
are usually modeled as piecewise constant functions. Therefore any discretization of the domain $\Omega$
into finitely many cells restricts the geometry of the level sets of binary-valued ansatz functions. Specifically, the boundaries
of their level sets can only follow the boundaries of the discretization cells. This may lead to a gap between the value of 
the perimeter functional for a discretized approximant and the
perimeter functional for the function that is approximated that
is bounded below.
A detailed example with visualization is given in Example 2.10 and Figure 1 in \cite{schiemann2024discretization}.
Our analysis of the trust-region algorithm 
introduced below solves a sequence of
subproblems that
approximate \eqref{eq:tr_sharp}.

Specifically, the second trust-region subproblem arises by replacing the perimeter regularization $C_0 P_{\Omega}(w^{-1}(\{1\}))$ with the Ginzburg--Landau energy $E_\varepsilon(w)$, which yields
\begin{gather}\label{eq:tr_gle}
\text{{\ref{eq:tr_gle}}}(\bar{w}, g, \Delta) \coloneqq
\left\{
\begin{aligned}
\min_{w \in L^2(\Omega)}\ & (g, w - \bar{w})_{L^2(\Omega)} + \gamma E_\varepsilon(w) - \gamma E_\varepsilon(\bar{w})\\
\text{s.t.}\quad & \|w - \bar{w}\|_{L^2(\Omega)}^2 \le \Delta\text{ and }w(x) \in [0,1] \text{ for a.e.\ } x \in \Omega.
\end{aligned}
\right.
\tag{TR$_\varepsilon$}
\end{gather}
We note that we impose the trust region
in \eqref{eq:tr_gle} with the (smooth) squared $L^2$-norm 
instead of the $L^1$-norm. This does not make a difference
for binary-valued functions, however, because the difference of
two binary-valued functions is $\{-1,0,1\}$-valued
so that the squared $L^2$-norm coincides with the $L^1$-norm
of the difference.
While instances of \eqref{eq:tr_gle} are non-convex optimization problems, there are well-established discretizations
available for them that can, in principle, be solved to $\varepsilon$-global optimality
with successive approximations of the non-convexities and a branch-and-bound strategy, albeit this is clearly
computationally very expensive.

Since this trust-region subproblem is hard to solve due to the non-convexity of the term $\Psi$, we propose
to replace many of its solves by solves of the following trust-region subproblem that replaces the
non-convex term in $E_\varepsilon(w)$ by its linearization at $\bar{w}$ and is convex.
\begin{gather}\label{eq:tr_cvx}
\text{{\ref{eq:tr_cvx}}}(\bar{w}, g, \Delta) \coloneqq
\left\{
\begin{aligned}
\min_{w \in L^2(\Omega)}\ & \left(g + \frac{\gamma}{\varepsilon}(1 - 2\bar{w}), w - \bar{w}\right)_{L^2(\Omega)} \\
&\quad + \gamma \frac{\varepsilon}{2}\|\nabla w\|^2_{L^2(\Omega)} - \gamma \frac{\varepsilon}{2}\|\nabla \bar{w}\|^2_{L^2(\Omega)}
   \\
\text{s.t.}\quad & \|w - \bar{w}\|_{L^2(\Omega)}^2 \le \Delta\text{ and }w(x) \in [0,1] \text{ for a.e.\ } x \in \Omega.
\end{aligned}
\right.
\tag{TR$^c_\varepsilon$}
\end{gather}
We briefly state that the trust-region subproblems \eqref{eq:tr_sharp}, \eqref{eq:tr_gle}, and
\eqref{eq:tr_cvx} admit minimizers under mild assumptions.
\begin{proposition}\label{prp:tr_existence}
Let $\gamma > 0$, $\varepsilon > 0$, $g \in L^2(\Omega)$, and $\Delta > 0$.
\begin{enumerate}
\item If $\bar{w} \in H^1(\Omega)$ and $\bar{w}(x) \in [0,1]$ for a.e.\ $x \in \Omega$, then
$\text{{\emph{\ref{eq:tr_gle}}}}(\bar{w}, g, \Delta)$ admits a minimizer in $H^1(\Omega)$.
\item If $\bar{w} \in H^1(\Omega)$ and $\bar{w}(x) \in [0,1]$ for a.e.\ $x \in \Omega$, then
$\text{{\emph{\ref{eq:tr_cvx}}}}(\bar{w}, g, \Delta)$ admits a minimizer in $H^1(\Omega)$.
\item If $\bar{w}(x) \in \{0,1\}$ for a.e.\ $x \in \Omega$ and $P_\Omega(\bar{w}^{-1}(\{1\})) <\infty$, then 
$\text{{\emph{\ref{eq:tr_sharp}}}}(\bar{w}, g, \Delta)$ admits a minimizer
in $\Wadzero$.
\end{enumerate}
In all of these situations, the minimal objective value is less than or equal to zero.
\end{proposition}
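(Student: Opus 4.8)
The plan is to handle all three parts by the direct method of the calculus of variations, after first disposing of the value claim. Observe that in every case $\bar w$ is itself feasible: it satisfies the relevant pointwise constraint by assumption, and the trust-region constraint holds trivially since $\|\bar w - \bar w\| = 0 \le \Delta$. Evaluating each objective at $w = \bar w$ makes the linear inner-product term vanish and cancels the regularization difference exactly, so the objective equals $0$ there. This simultaneously shows each feasible set is nonempty and that the infimum is at most $0$ (the final assertion); it remains only to show the infimum is attained, and since a minimizing sequence has objective values converging to the infimum $\le 0 < \infty$, every such sequence automatically lies in the effective domain of the regularizer (in $H^1(\Omega)$ for the relaxed problems, in $\Wadzero$ for the sharp one).

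For the two relaxed problems (parts 1 and 2), I would fix a minimizing sequence $(w_n)$ and first show it is bounded in $H^1(\Omega)$. The pointwise constraint $w_n(x)\in[0,1]$ together with $\|w_n-\bar w\|_{L^2}^2\le\Delta$ bounds $\|w_n\|_{L^2}$, and the gradient term $\gamma\frac{\varepsilon}{2}\|\nabla w_n\|_{L^2}^2$ bounds $\|\nabla w_n\|_{L^2}$: the remaining terms are bounded below, namely the linear term (by the $L^2$-bound just obtained) and, for \eqref{eq:tr_gle}, the nonnegative potential $\frac{1}{\varepsilon}\int_\Omega\Psi(w_n)$, so boundedness of the objective forces boundedness of the gradient. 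Passing to a subsequence gives $w_n\weakto w^\ast$ in $H^1(\Omega)$ and, by Rellich--Kondrachov, $w_n\to w^\ast$ strongly in $L^2(\Omega)$ and a.e. The constraints survive the limit: $w^\ast\in[0,1]$ a.e.\ by a.e.\ convergence and the trust-region ball by weak lower semicontinuity of the $L^2$-norm. For the objective, the linear term passes by strong $L^2$ convergence, the convex gradient term is weakly lower semicontinuous, and in the \eqref{eq:tr_gle} case the potential term converges by dominated convergence, using that $\Psi$ is continuous and bounded on $[0,1]$ and $w_n\to w^\ast$ a.e. Hence $w^\ast\in H^1(\Omega)$ is a minimizer.

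For the sharp-interface problem (part 3) I would run the same scheme with BV compactness replacing the Sobolev embedding. The key preliminary estimate is that the linear term is bounded independently of the perimeter: since $w_n$ and $\bar w$ are $\{0,1\}$-valued, $\|w_n-\bar w\|_{L^2}^2=\|w_n-\bar w\|_{L^1}\le\Delta$, whence $|(g,w_n-\bar w)_{L^2}|\le\|g\|_{L^2}\sqrt{\Delta}$. As the objective values are bounded along a minimizing sequence and the linear term is bounded, the perimeter $P_\Omega(w_n^{-1}(\{1\}))=\TV(w_n)$ stays bounded. The compactness of bounded-perimeter sets in $L^1(\Omega)$ (the same fact underlying the existence result for \eqref{eq:p_abstract}) yields $w_n\to w^\ast$ in $L^1(\Omega)$ along a subsequence, with $w^\ast\in\{0,1\}$ a.e.\ by a.e.\ convergence; the $L^1$ trust-region constraint is preserved in the limit, the perimeter is $L^1$-lower semicontinuous, and the linear term is $L^1$-continuous on the bounded binary set, so $w^\ast\in\Wadzero$ is a minimizer.

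The main obstacle is the coercivity and limit passage for the nonconvex functional $E_\varepsilon$ in part 1: one must use that the potential contributes nonnegatively, so it does not interfere with the coercivity supplied by the gradient term, and simultaneously that it is continuous and bounded on $[0,1]$, so that the Rellich compactness is enough to pass to the limit despite the nonconvexity of $\Psi$. In the sharp case the analogous delicate point is that the trust region is measured in $L^1$ rather than $L^2$, which is exactly what bounds the linear term through the binarity identity $\|\cdot\|_{L^2}^2=\|\cdot\|_{L^1}$; without the $\{0,1\}$-valuedness this control, and hence the uniform perimeter bound, would be unavailable.
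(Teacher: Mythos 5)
Your proposal is correct and follows essentially the same route as the paper: the direct method of the calculus of variations, using the $L^\infty$/pointwise bounds to control the linear and potential terms, coercivity of the gradient term plus the compact embedding $H^1(\Omega)\hookrightarrow L^2(\Omega)$ and weak lower semicontinuity for parts 1 and 2, BV compactness and $L^1$-lower semicontinuity of the perimeter for part 3, and feasibility of $\bar w$ with objective value zero for the final claim. Your write-up merely spells out the details that the paper's proof leaves implicit.
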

\begin{proof}
The first and second existence claims follow by using the direct method of calculus of variations. The existence
of an infimal value in $\R$ follows, for example, from the $L^\infty$-bounds, which yields a bound on
the first term in the objective in both cases and a bound on the non-convex term in $E_\varepsilon$,
as well as the coercivity of the term
$\frac{\varepsilon}{2}\|\nabla w\|_{L^2(\Omega)}^2$ with respect to
$H^1(\Omega)$ under the boundedness in $L^2(\Omega)$.
Then, the limit of a weakly converging infimal sequence is a minimizer because of the compact embedding
$H^1(\Omega) \hookrightarrow L^2(\Omega)$, which yields continuity of the first term and the non-convex term
in $E_\varepsilon$ in the first claim. Finally, the term $\frac{\varepsilon}{2}\|\nabla w\|_{L^2(\Omega)}^2$ is lower
semi-continuous with respect to weak convergence in $H^1(\Omega)$.

The third existence claim follows with the direct method of calculus of variations when the feasible set
of $\text{{\ref{eq:tr_sharp}}}(\bar{w}, g, \Delta)$ is equipped with the weak-$^*$ topology
of functions of bounded variations, see \cite[Prop.\ 2.3]{leyffer2022sequential}.

Finally, the feasibility of $\bar{w}$ with objective value zero gives the non-positivity of the
minimal objective value for all of the three trust-region subproblems.
\end{proof}

L-stationarity in the sense of \cref{dfn:stationarity} also holds for minimizers of \eqref{eq:tr_sharp},
which is given below.

\begin{proposition}\label{prp:tr_sharp_stationarity}
Let $g \in C(\bar{\Omega})$. Let $\Delta > 0$.
Let $w \in L^1(\Omega)$ satisfy $w(x) \in \{0,1\}$ for a.e.\ $x \in \Omega$ and $P_\Omega(w^{-1}(\{1\})) <\infty$.
Let $w$ be optimal for $\text{\emph{\ref{eq:tr_sharp}}}(w,g,\Delta)$ in an $L^1$-ball of radius $r$ around $w$
for some $r > 0$, then $w$ is L-stationary.
\end{proposition}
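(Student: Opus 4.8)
The plan is to reduce the statement to \cref{prp:stationarity} by introducing the linear surrogate reduced objective $\tilde{\jmath}(v) \coloneqq (g, v)_{L^2(\Omega)}$ and verifying that it satisfies the hypotheses of that proposition at $\bar{w} = w$. Since $g \in C(\bar{\Omega}) \subset L^\infty(\Omega)$, the map $v \mapsto \int_\Omega g v \dd x$ is a bounded linear functional on $L^1(\Omega)$; being linear, it is continuously Fréchet differentiable with $\tilde{\jmath}'(v) = (g, \cdot)_{L^2(\Omega)}$ for every $v$, so \cref{ass:general_var} holds for $\tilde{\jmath}$. Its $L^2(\Omega)$-Riesz representative is $\Rgrad \tilde{\jmath}(v) = g$, which lies in $C(\bar{\Omega})$ by assumption; hence the regularity hypothesis of \cref{prp:stationarity} is met at $w$, and I note that $w \in \Wadzero$ because it is $\{0,1\}$-valued with finite perimeter.

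Next I would rewrite the trust-region objective. The subtracted terms $(g, w)_{L^2(\Omega)}$ and $\gamma C_0 P_\Omega(w^{-1}(\{1\}))$ in $\text{\ref{eq:tr_sharp}}(w, g, \Delta)$ do not depend on the optimization variable, so for any feasible competitor $v$ the subproblem objective equals $\bigl[\tilde{\jmath}(v) + \gamma C_0 P_\Omega(v^{-1}(\{1\}))\bigr] - \bigl[\tilde{\jmath}(w) + \gamma C_0 P_\Omega(w^{-1}(\{1\}))\bigr]$. Competitors with $P_\Omega(v^{-1}(\{1\})) = \infty$ yield objective value $+\infty$ and are irrelevant, so optimality of $w$ among feasible points amounts to optimality among $v \in \Wadzero$. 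The feasible set moreover enforces $\|v - w\|_{L^1(\Omega)} \le \Delta$, which combined with the assumed optimality inside the $L^1$-ball of radius $r$ shows that $w$ minimizes $\tilde{\jmath} + \gamma C_0 P_\Omega((\cdot)^{-1}(\{1\}))$ over all $v \in \Wadzero$ with $\|v - w\|_{L^1(\Omega)} \le r' \coloneqq \min\{r, \Delta\} > 0$.

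This is exactly the local-optimality hypothesis of \cref{prp:stationarity} with $j$ replaced by $\tilde{\jmath}$ and $r$ replaced by $r'$. Invoking that proposition then yields that $w$ satisfies the variational principle \eqref{eq:p_variational_principle} with $\Rgrad j(w)$ replaced by $g = \Rgrad \tilde{\jmath}(w)$, which is the asserted stationarity of $w$.

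I do not expect a genuine obstacle: the argument is a direct reduction, and the only points needing care are checking that the discarded terms are constant in the optimization variable and passing from the radius pair $(r, \Delta)$ to the single effective radius $r' = \min\{r, \Delta\}$. A subtlety worth stating explicitly is that here ``$w$ is stationary'' must be read with the given datum $g$ playing the role of $\Rgrad j(w)$ in \cref{dfn:stationarity}, which is precisely what the surrogate $\tilde{\jmath}$ with $\Rgrad \tilde{\jmath} = g$ delivers.
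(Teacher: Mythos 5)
Your proposal is correct and follows essentially the same route as the paper: the paper's proof likewise applies \cref{prp:stationarity} with the surrogate objective $j(v) \coloneqq (g,v)_{L^2}$ and observes that optimality in an $L^1$-ball of radius $r$ under the trust-region constraint yields optimality in an $L^1$-ball of radius $r_0 \le \Delta$. Your write-up merely spells out the verification of \cref{ass:general_var} and the cancellation of the constant terms in more detail than the paper does.
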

\begin{proof}
This follows from \cref{prp:stationarity} with the choice $j(w) \coloneqq (g,w)_{L^2}$, $w \in L^2(\Omega)$,
after observing that the optimality of $w$ in an $L^1$-ball of radius $r$ also implies
optimality in an $L^1$-ball of radius $r_0$ for some $r_0 \le \Delta$, see also the proof of
Proposition 5.5 in \cite{manns2022on}.
\end{proof}

In \cite{manns2022on}, the convergence analysis of the 
trust-region algorithm therein uses that, for a fixed trust-region radius,
the minimizers of the trust-region subproblems for a converging (sub)sequence
of iterates converge to a minimizer to the limit problem. This then leads
to a sufficient decrease for iterates that are close enough to the limit
if the limit is not stationary. This work will use a similar argument
and we thus re-establish this convergence of minimizers for our situation,
where the regularization term changes with $\varepsilon^n \searrow 0$
over the course of the iterations $n \to \infty$. The intended result
that minimizers of \eqref{eq:tr_gle} converge to minimizers \eqref{eq:tr_sharp},
follows from the following $\Gamma$-convergence
considerations. To this end, we consider a sequence
$(v^n)_n \subset \Wadzero$ and $v \in L^1(\Omega)$,
a sequence $(g^n)_n \subset L^2(\Omega)$ and $g\in L^2(\Omega)$,
$\varepsilon^n \subset (0,\infty)$, and $\Delta > 0$.

For $n \in \N$, we define the functionals $T^n:\{ w \in L^1(\Omega)\,|\,\|w\|_{L^\infty(\Omega)}\le 1\}\to\R$ by
\[ T^n(w) \coloneqq (g^n, w - v^n)_{L^2(\Omega)}
+ \gamma E_{\varepsilon^n}(w) 
- \gamma E_{\varepsilon^n}(v^n) 
+ \delta_{[0,\Delta]}(\|w - v^n\|^2_{L^2(\Omega)})
\]
for $w \in L^1(\Omega)$ and the limit functional $T:\{  w \in L^1(\Omega)\,|\,\|w\|_{L^\infty(\Omega)}\le 1\} \to \R$ by
\[
T(w) \coloneqq (g, w - v)_{L^2} + \gamma C_0P_\Omega(w^{-1}(\{1\})) - \gamma C_0P_\Omega(v^{-1}(\{1\}))
+ \delta_{\{0\}}(|w^{-1}((0,1))|)
+ \delta_{[0,\Delta]}(\|w - v\|_{L^1})
\]
for $w \in L^1(\Omega)$.
\begin{theorem}\label{thm:tr_gamma_convergence}
Let $(v^n)_n$ and $v$ satisfy
$\sup_{n\in\N}\|v^n\|_{L^\infty(\Omega)}\le 1$,
$v^n \to v$ in $L^1(\Omega)$, and
$E_{\varepsilon^n}(v^n) \to C_0P_\Omega(v^{-1}(\{1\}))$
for $n \to \infty$ with $v(x) \in \{0,1\}$
for a.e.\ $x\in \Omega$.
Let $(g^n)_n$ and $g$ satisfy $g^n \weakto g$ in $L^2(\Omega)$.
Let $\varepsilon^n \searrow 0$ and $\Delta > 0$.
Then the functionals $T^n$ $\Gamma$-converge to $T$
with respect to convergence in $L^1(\Omega)$.
\end{theorem}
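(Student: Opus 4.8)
The plan is to recognize $T^n$ as a perturbation of the Ginzburg--Landau functional $\gamma E_{\varepsilon^n}$ and to reduce the claim to the classical Modica--Mortola $\Gamma$-convergence of $E_{\varepsilon^n}$ to $w \mapsto C_0 P_\Omega(w^{-1}(\{1\}))$ on $\{0,1\}$-valued limits, treating the remaining three ingredients---the linear term, the additive constant $-\gamma E_{\varepsilon^n}(v^n)$, and the trust-region indicator---as a continuous/closed perturbation. I would first record two preliminary facts used throughout. First, since every admissible competitor satisfies $\|w\|_{L^\infty(\Omega)}\le 1$, any sequence $w^n \to w$ in $L^1(\Omega)$ in the domain also converges strongly in $L^2(\Omega)$, because $\|w^n - w\|_{L^2}^2 \le 2\|w^n-w\|_{L^1}$; consequently $v^n \to v$ in $L^2(\Omega)$ as well, and the weak--strong pairing of $g^n \weakto g$ against $w^n - v^n \to w-v$ yields $(g^n, w^n - v^n)_{L^2} \to (g, w-v)_{L^2}$. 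Together with the hypothesis $E_{\varepsilon^n}(v^n) \to C_0 P_\Omega(v^{-1}(\{1\}))$, every term other than $\gamma E_{\varepsilon^n}(\cdot)$ and the indicator passes to its limit along each admissible converging sequence. Second, and crucially, for $\{0,1\}$-valued $w$ and $v$ one has $|w-v|^2 = |w-v|$ pointwise, so $\|w-v\|_{L^2}^2 = \|w-v\|_{L^1}$; this is what reconciles the squared-$L^2$ constraint in $T^n$ with the $L^1$ constraint in $T$ on the relevant (binary) limits.

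For the liminf inequality I take $w^n \to w$ in $L^1(\Omega)$ and may assume $L \coloneqq \liminf_n T^n(w^n) < \infty$. Passing to a subsequence attaining $L$, the indicator forces $\|w^n - v^n\|_{L^2}^2 \le \Delta$ and finiteness of $E_{\varepsilon^n}(w^n)$ forces $w^n \in H^1(\Omega)$ with $w^n \in [0,1]$ a.e. Subtracting the (convergent) linear and constant terms shows $\sup_n E_{\varepsilon^n}(w^n) < \infty$, whence the compactness part of the Modica--Mortola theorem gives $w \in \BV(\Omega,\{0,1\})$, so that $\delta_{\{0\}}(|w^{-1}((0,1))|) = 0$, and its liminf part gives $\liminf_n E_{\varepsilon^n}(w^n) \ge C_0 P_\Omega(w^{-1}(\{1\}))$. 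Passing to the limit in the constraint via strong $L^2$ convergence and then invoking the binarity identity yields $\|w - v\|_{L^1} = \|w-v\|_{L^2}^2 \le \Delta$, so the limit indicator also vanishes. Adding the convergent linear and constant terms to the Modica--Mortola liminf then delivers $L \ge T(w)$.

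For the limsup inequality I may assume $T(w) < \infty$, i.e. $w \in \BV(\Omega,\{0,1\})$ with $\|w-v\|_{L^1}\le\Delta$. When the constraint is slack, $\|w-v\|_{L^1} < \Delta$, a Modica--Mortola recovery sequence $w^n \to w$ in $L^1(\Omega)$ with $w^n \in H^1(\Omega)\cap[0,1]$ and $E_{\varepsilon^n}(w^n) \to C_0 P_\Omega(w^{-1}(\{1\}))$ suffices: by strong $L^2$ convergence $\|w^n - v^n\|_{L^2}^2 \to \|w-v\|_{L^1} < \Delta$, so the indicator is eventually zero and the remaining terms converge to those of $T(w)$. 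The main obstacle is the boundary case $\|w-v\|_{L^1} = \Delta$, where the recovery sequence only satisfies $\|w^n - v^n\|_{L^2}^2 \to \Delta$ and may violate the constraint for every large $n$, sending $T^n(w^n)$ to $+\infty$. I would resolve this by the standard device of lower semicontinuity of the $\Gamma$-upper limit $T''$: it suffices to produce $w_k \to w$ in $L^1(\Omega)$ lying in the slack region $\{\|\cdot - v\|_{L^1} < \Delta\}$ with $\liminf_k T(w_k) \le T(w)$, since then $T''(w) \le \liminf_k T''(w_k) \le \liminf_k T(w_k) \le T(w)$ by the already-treated slack case. Such an inner approximation is obtained by removing from $E = w^{-1}(\{1\})$ a small ball centered at an interior Lebesgue density point of $E \symdiff F$, where $F = v^{-1}(\{1\})$ and $|E \symdiff F| = \Delta > 0$: this strictly decreases $\|w_k - v\|_{L^1}$ below $\Delta$, drives $\|w_k - w\|_{L^1}\to 0$, and, by subadditivity of the relative perimeter, changes $P_\Omega(E_k)$ by at most the vanishing perimeter of the ball, so that $\liminf_k P_\Omega(E_k) \le P_\Omega(E)$ and hence $\liminf_k T(w_k) \le T(w)$. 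Verifying the density-point estimates and this perimeter bound is the only genuinely technical step; the remainder is bookkeeping around the Modica--Mortola theorem.
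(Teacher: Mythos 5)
Your proof is correct and follows essentially the same route as the paper's. The lower-bound inequality is identical: the $L^\infty$ bounds upgrade $L^1$ to $L^2$ convergence, the weak--strong pairing handles the linear term, Modica's Proposition~1 gives binarity of the limit and the liminf for the Ginzburg--Landau part, and the identity $\|w-v\|_{L^2}^2=\|w-v\|_{L^1}$ for binary functions reconciles the two trust-region constraints. For the upper bound, both you and the paper split into the slack case $\|w-v\|_{L^1}<\Delta$ (standard Modica--Mortola recovery sequence, eventually feasible) and the boundary case $\|w-v\|_{L^1}=\Delta$, and both resolve the latter by the same geometric device: modify $w$ on a small ball centred at a Lebesgue density point of the disagreement set so that it agrees with $v$ there, gaining a definite amount of slack of order $\delta^d$ while perturbing the relative perimeter only by a quantity that vanishes as $\delta\searrow 0$. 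The one organizational difference is how the boundary case is closed out: the paper constructs an explicit diagonal recovery sequence $\tilde w^{\delta^k}_{n_k}$ (smooth-boundary approximation of the modified set, then Modica's Proposition~2, with the perimeter error controlled by $\delta^{d-1}c_1+\Ha^{d-1}(\partial^*D\cap B_\delta(y))$), whereas you invoke lower semicontinuity of the $\Gamma$-upper limit to reduce the boundary case to the already-settled slack case; this is a cleaner bookkeeping of the same construction and is perfectly valid. Two minor points to tighten: a density point of $E\symdiff F$ need not be a density point of $E\setminus F$, so ``removing a ball from $E$'' should be ``replacing $w$ by $v$ on the ball'' (as the paper does), or the point should be chosen in whichever of $E\setminus F$, $F\setminus E$ has positive measure; and the perimeter control you need is exactly the submodularity estimate $P_\Omega(E\setminus B)\le P_\Omega(E)+P(B)$, which gives the (stronger) bound $\limsup_k P_\Omega(E_k)\le P_\Omega(E)$.
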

\begin{proof}
We extend the arguments from the proof of Theorem 5.2 in \cite{manns2022on}, where
the perimeter regularization is present in the functionals $T^n$ instead of
the Ginzburg--Landau energy. We prove the lower bound inequality first and the upper bound
inequality second.

In order to prove the lower bound inequality, let $w^n \to w$ in $L^1(\Omega)$ and
$\sup_{n\in\N} \|w^n\|_{L^\infty} \le 1$.
Then we obtain $w^n \to w$ in $L^2(\Omega)$ and thus $(g^n, w^n - v^n)_{L^2} \to (g, w - v)_{L^2}$.
We also have $\gamma E_{\varepsilon^n}(v^n) \to \gamma C_0 P_\Omega(v^{-1}(\{1\}))$
by assumption. If $\|w^{n_k} - v^{n_k}\|_{L^2}^2 \le \Delta$ holds for a subsequence (indexed by $k$),
we obtain $\|w - v\|_{L^1} \le \Delta$ by virtue of the triangle inequality
and the fact that $\|f\|_{L^1} = \|f\|_{L^2}^2$ if $f$ is $\{0,1\}$-valued. Combining these considerations
with the analysis of $E_{\varepsilon^n}$ with respect to convergence in $L^1(\Omega)$
in Proposition 1 in \cite{modica1987gradient} yields the lower bound inequality
\[ T(w) \le \liminf_{n\to\infty} T^n(w^n). \]
Note that Proposition 1 in \cite{modica1987gradient} also asserts $w(x) \in \{0,1\}$ a.e.\ if
$\liminf_{n\to\infty} E_\varepsilon(w^n) < \infty$.

The upper bound inequality is immediate if $T(w) = \infty$ holds with the choice $w^n = w$ for all $n\in\N$.
Let $w \in L^1(\Omega)$ with $T(w) < \infty$ be given. We distinguish the cases
$\|v - w\|_{L^1} < \Delta$ and $\|v - w\|_{L^1} = \Delta$.
In the case $\|v - w\|_{L^1} \eqqcolon \tilde{\Delta} < \Delta$, we apply Lemma 1 in
\cite{modica1987gradient}, which provides open and bounded subsets of $\R^d$
with smooth boundary $F_k$ so that $P_\Omega(F_k) \to P_\Omega(w^{-1}(\{1\}))$
and $|(F_k \cap \Omega) \symdiff w^{-1}(\{1\})| \to 0$.
For each such set $F_k$, Proposition 2 in \cite{modica1987gradient}
provides functions $w^n_k \to \chi_{F_k \cap \Omega}$ in $L^1(\Omega)$ with $\|w^n_k\|_{L^\infty} \le 1$
for all $n \in \N$ such that
\[ \limsup_{n\to\infty} \gamma E_{\varepsilon^n}(w^n_k) = \gamma C_0 P_\Omega(F_k). \]
By choosing a suitable diagonal sequence, which we denote by  $w^n$, we obtain
\[ \limsup_{n\to\infty} \gamma E_{\varepsilon^n}(w^n) = \gamma C_0 P_\Omega(w^{-1}(\{1\})). \]

Moreover, we obtain
\begin{gather}\label{eq:wn_vn_strictly_less}
\|w^n - v^n\|_{L^2}^2
\le \|w^n - v^n\|_{L^1}
\le \|v - v^n\|_{L^1} + \|w - w^n\|_{L^1} + \tilde{\Delta},
\end{gather}   
where the first inequality follows from $|w^n(x) - v^n(x)| \le 1$,
thereby implying $|w^n(x) - v^n(x)|^2 \le |w^n(x) - v^n(x)|$,
for a.e.\ $x \in \Omega$.
Because of $v^n \to v$ and $w^n \to w$ in $L^1(\Omega)$ and $\tilde{\Delta} < \Delta$,
the right hand side in \eqref{eq:wn_vn_strictly_less} is bounded by $\Delta$ for all large enough $n$.

We finish the proof by considering the case $\|v - w\|_{L^1} = \Delta$. 
This implies that there is a set $D = \{ x \in \Omega \,|\, w(x) = 1 - v(x) \}$ that has strictly positive $d$-dimensional Lebesgue measure.
We assume without loss of generality that $v(x) = 1$ a.e.\ in $D$ and obtain
\[ P_\Omega(D) 
= P_\Omega\left(v^{-1}(\{1\}) \cap w^{-1}(\{0\})
\right).
\]
Because $D$ has strictly positive $d$-dimensional Lebesgue measure,
$D$ has a point $y \in D$ of density $1$, that is
\[
\lim_{r\searrow 0} \frac{|D \cap B_r(y)|}{|B_r(y)|} = 1.
\]
We deduce that there exists $r_0 > 0$ such that
\[ 
\overline{B_{r_0}(y)} \subset \Omega
\quad
\text{and}
\quad
|D \cap B_r(y)| \ge \frac{1}{2}|B_r(y)| \text{ for all } 0 < r < r_0.
\]
In particular, $B_{r_0}(y)$ has strictly positive distance to the boundary $\partial \Omega$.

Let $0 < \delta < r_0$. We define for $x \in \Omega$:
\[ w^\delta(x) \coloneqq \left\{ \begin{aligned}
w(x) & \text{ if } x \in \Omega \setminus B_{\delta}(y), \\
v(x) & \text{ else}
\end{aligned}\right.
\]
so that
\begin{gather}\label{eq:wdelta_v}
\|w^\delta - v\|_{L^1} 
\le \|w - v\|_{L^1} - \frac{1}{2}\delta^d|B_1|
= \Delta - \frac{1}{2}\delta^d |B_1|.
\end{gather}
We approximate $(w^\delta)^{-1}(\{1\})$ with an open and bounded set
$F^\delta$ that has a smooth boundary and define
$\tilde{w}^\delta \coloneqq \chi_{F^\delta \cap \Omega}$ such that
\begin{align}
\|w^\delta - \tilde{w}^\delta\|_{L^1} &\leq \frac{1}{4}\delta^d|B_1| \text{ and} \label{eq:wdelta_tildewdelta}\\
|P_\Omega(F^\delta) - P_\Omega((w^\delta)^{-1}(\{1\}))| &< 
\delta \frac{1}{2C_0},
\end{align}
which is possible because of Lemma 2 in \cite{modica1987gradient}.
Next, we consider an approximation as is asserted in Proposition 2 in
\cite{modica1987gradient} for $\tilde{w}^\delta$.
In particular, there exists a large enough $n_\delta \in \N$
such that for all $n \ge n_\delta$ there exists a
$[0,1]$-valued function $\tilde{w}^\delta_n \in H^1(\Omega)$ such that
\begin{align}
|E_{\varepsilon^{n}}(\tilde{w}^\delta_n) - C_0P_\Omega((w^\delta)^{-1}(\{1\}))| &\le |E_{\varepsilon^{n}}(\tilde{w}^\delta_n) - C_0P_\Omega((\tilde{w}^\delta)^{-1}(\{1\}))|
\label{eq:per_tildewdeltan_wdelta}\\
&\hphantom{\le}
+ |C_0 P_\Omega((\tilde{w}^\delta)^{-1}(\{1\})) - C_0P_\Omega((w^\delta)^{-1}(\{1\}))|\nonumber \\
&< \delta,\nonumber\\
\|w^\delta - \tilde{w}^\delta_n\|_{L^1} &\le
\|w^\delta - \tilde{w}_\delta\|_{L^1} + \|\tilde{w}^\delta - \tilde{w}^\delta_n\|_{L^1} 
\le \frac{\delta^d}{4}|B_1|,
\label{eq:wdelta_wtildedeltan}\text{ and}\\
\|v^{n} - v\|_{L^1} &\le \frac{1}{4}\delta^d|B_1|.
\label{eq:vn_v}
\end{align}
We combine \eqref{eq:wdelta_v}, \eqref{eq:wdelta_wtildedeltan},
and \eqref{eq:vn_v} in order to obtain 
\begin{align}
\|v^{n} - \tilde{w}^\delta_n\|_{L^2}^2
\le \|v^{n} - v\|_{L^1} 
+ \|v - w^\delta\|_{L^1} 
+ \|w^\delta - \tilde{w}^\delta_n\|_{L^1} \nonumber\\
\le \|v^{n} - v\|_{L^1} + \Delta - \frac{\delta^d}{4}|B_1|
\le \Delta.\label{eq:L1_reverse_approximation}
\end{align}
Next, we consider the asymptotics of $E_{\varepsilon^{n}}(\tilde{w}^\delta_n)$.
First, we obtain by the construction of $w^\delta$,
the $\sigma$-additivity of $\Ha^{d-1}$, the finiteness of all involved terms,
and the triangle inequality that
\begin{align}
\hspace{1em}&\hspace{-1em}
\left|C_0 P_\Omega\big((w^\delta)^{-1}(\{1\})\big) - C_0 P_\Omega\big(w^{-1}(\{1\})\big)\right|
\nonumber \\
&= C_0 \Big|\Ha^{d-1}\big( (\Omega\setminus \overline{B_\delta(y)}) \cap
\partial^* w^{-1}(\{1\})\big)
+ \Ha^{d-1}\big(\partial B_\delta(x) \cap \partial^*(w^\delta)^{-1}(\{1\})\big) \nonumber \\
&\hphantom{=} + \Ha^{d-1}\big(B_\delta(y) \cap \partial^*v^{-1}(\{1\})\big) 
- \Ha^{d-1}\big(\Omega \cap \partial^* w^{-1}(\{1\})\big)\Big|
\nonumber \\
&= C_0\Big|\Ha^{d-1}\big(\partial B_\delta(y) \cap \partial^*(w^\delta)^{-1}(\{1\})\big)
+ \Ha^{d-1}\big(B_\delta(y) \cap \partial^*v^{-1}(\{1\})\big) 
\nonumber \\
&\hphantom{=} 
- \Ha^{d-1}\big(\partial B_\delta(y) \cap \partial^* w^{-1}(\{1\})\big)
- \Ha^{d-1}\big(B_\delta(y) \cap \partial^* w^{-1}(\{1\})\big)
\Big|
\nonumber\\
&\le C_0\left(2 \Ha^{d-1}\big(\partial B_\delta(y)\big)
+ \Ha^{d-1}\big(B_\delta(y) \cap \partial^*v^{-1}(\{1\})\big) 
+ \Ha^{d-1}\big(B_\delta(y) \cap \partial^* w^{-1}(\{1\})\big)\right)
\nonumber \\
&\le \delta^{d - 1}c_1
+ C_0 \Ha^{d-1}\big(B_\delta(y) \cap \partial^*v^{-1}(\{1\})\big) 
+ C_0 \Ha^{d-1}\big(B_\delta(y) \cap \partial^* w^{-1}(\{1\})\big)
\end{align}
for some $c_1 > 0$. In combination with
\eqref{eq:per_tildewdeltan_wdelta}, the triangle inequality gives
\begin{multline}\label{eq:per_reverse_approximation}
|C_0 P_\Omega(w^{-1}(\{1\})) - E_{\varepsilon^{n}}(\tilde{w}^\delta_n)| \\
\le \delta + \delta^{d-1}c_1 
+ C_0 \Ha^{d-1}\big(B_\delta(y) \cap \partial^*v^{-1}(\{1\})\big) 
+ C_0 \Ha^{d-1}(B_\delta(y) \cap \partial^* w^{-1}(\{1\})).
\end{multline}
We observe that the restricted measures $\Ha^{d-1}(\ \cdot\ \cap \partial^* w^{-1}(\{1\}))$ and
$\Ha^{d-1}(\ \cdot\ \cap \partial^* v^{-1}(\{1\}))$
are finite Radon measures so that the last two terms tend to zero for $\delta \searrow 0$.
Thus, to conclude the argument, we consider a sequence $\delta^k \searrow 0$
and define $n_k \coloneqq n_{\delta^k}$. We define for $k \in \N$
\[ w^{n_k} \coloneqq \tilde{w}^{\delta^{k}}_{n_k}
\]
and obtain by virtue of \eqref{eq:L1_reverse_approximation} and
\eqref{eq:per_reverse_approximation} that
\[ T(w) = \lim_{k\to\infty}T^{n_k}(w^{n_k}), \]
which implies the desired upper-bound inequality. 
\end{proof}

\begin{remark}
The assumption that the regularization terms converge, specifically $E_{\varepsilon^n}(v^n) \to C_0P_\Omega(v^{-1}(\{1\}))$,
may seem restrictive at first. It corresponds to assuming strict (or intermediate)
convergence of $v^n$ in the space of functions of bounded variation if we were
considering convergence in $\BV$ instead of a phase-field approach.
In \cite{manns2022on}, it is shown that
this property can be ensured for iterates produced by a trust-region algorithm with a reset strategy for the
trust-region radius. Our algorithm will provide this feature as well.
\end{remark}
Unfortunately, the result above does not hold for the convex
problem \eqref{eq:tr_cvx}, which can be seen by means of the 
example below. Because of this, our convergence analysis
hinges on solutions to the subproblems \eqref{eq:tr_gle} and 
the subproblems \eqref{eq:tr_cvx} only provide a means to accelerate the
algorithm.
\begin{example}\label{ex:no_gamma_convergence_for_cvx_tr}
Let $w = 1$ and $\bar{w} = 1$ and $\Delta = |\Omega|$, implying that we can neglect
the trust-region constraint in \eqref{eq:tr_cvx}. Let $g^n = g \in L^2(\Omega)$, $v^n = v$ for all $n \in \N$.
Let $\varepsilon^n \searrow 0$. We choose $w^n \to w$ in $L^2(\Omega)$ with $\limsup_{n\to\infty} \gamma E_{\varepsilon^n}(w^n) < \infty$ such that $\varepsilon^n \in o(\|w - w^n\|_{L^1})$.
Then
\begin{align*}
\hspace*{2em}&\hspace*{-2em} (g, w - \bar{w})_{L^2} + \gamma C_0 P_\Omega(w^{-1}(\{1\})) - \gamma C_0 P_\Omega(\bar{w}^{-1}(\{1\})) \\
&= 0 >- \infty = \liminf_{n\to\infty} 
\underbrace{\frac{\gamma}{\varepsilon^n}\int_{\Omega} -(w^n - w) \dd x}_{\to - \infty}\\
&= \liminf_{n\to\infty} \left(g + \frac{\gamma}{\varepsilon}(1 - 2\bar{w}), w^n - \bar{w}\right)_{L^2(\Omega)}
+ \gamma E_{\varepsilon^n}(w^n) - \gamma E_{\varepsilon^n}(\bar{w}).
\end{align*}
This implies that the lower bound inequality cannot hold for the objective functional of \eqref{eq:tr_cvx}.
\end{example}

\subsection{Algorithm statement}\label{sec:algorithm_statement}
The homotopy trust-region algorithm is given in \cref{alg:trm} and proceeds as follows.
For a given $\varepsilon$, it repeatedly solves convex trust-region subproblems of
the form \eqref{eq:tr_cvx} and accepts candidate iterates $\tilde{w}^n$ as new iterates
$w^{n+1}$ if the acceptance criterion in ln.\ \ref{ln:suff_dec} is satisfied. The trust-region
radius $\Delta^n$ is doubled on acceptance of the computed candidate step and halved on
rejection. The trust-region radius is not increased over the value $\Delta^0$, which
stays constant over the course of the algorithm.

If the trust-region radius falls below $\underline{\Delta}^n$, then the trust-region radius is
reset to $\Delta^0$ and \cref{alg:trm} invokes solves of the non-convex trust-region
subproblems \eqref{eq:tr_gle} for successively halved trust-region radii until a
computed iterate can be accepted or the trust-region radius falls below $\underline{\Delta}^n$
again. In the former case, the trust-region radius is reset, and \cref{alg:trm} returns
to solving trust-region subproblems of the form \eqref{eq:tr_cvx}. In the latter case,
the trust-region is reset as well, $\underline{\Delta}^n$
is divided by two, and the regularization parameter $\varepsilon^n$
of the Ginzburg--Landau energy is divided by $r > 4$.
Then \cref{alg:trm} also returns to solving
instances of type \eqref{eq:tr_cvx}.

In this way, \cref{alg:trm} ensures that a closer approximation of the perimeter
regularizer $C_0P_\Omega$ by means of a tighter Ginzburg--Landau energy is only triggered
if \cref{alg:trm} cannot improve the objective further with steps larger than
$\underline{\Delta}^n$ and that are computed by means of the non-convex trust-region
subproblem \eqref{eq:tr_gle}. We will see that this procedure allows us to relate a
first-order sufficient decrease with respect to \eqref{eq:p_abstract} / \eqref{eq:tr_sharp}
to an eventual acceptance of a solution of \eqref{eq:tr_gle} in the analysis of the
asymptotics of \cref{alg:trm} without having the problem that the algorithm does
not progress towards sharp interface limits because it produces infinitely many iterates for
one $\varepsilon^n$.

This, in turn, allows us to prove the convergence of iterates produced by \cref{alg:trm} to
L-stationary points of \eqref{eq:p_abstract}. We have failed to prove the convergence of
iterates produced by \cref{alg:trm} to L-stationary points of \eqref{eq:p_abstract} when relying on
subproblem solves of type \eqref{eq:tr_cvx}, which are much less expensive after
discretization due to their convexity. However, we have added the preferred solution of instances
of \eqref{eq:tr_cvx} in order to show how to accelerate the solution process
of many of the trust-region subproblems and, thereby, the overall algorithm if better results cannot be had
in the future.

\begin{algorithm}[ht]
\caption{Homotopy Trust-region Algorithm}\label{alg:trm}
\begin{flushleft}
\textbf{Input:} $j$ sufficiently regular, $\rho \in (0,1)$,\\
\textbf{Input:} $\Delta^0 > 0$,  $\varepsilon^0 > 0$, $r > 4$, $\rho > \kappa^0 > 0$,
$\underline{\Delta}^0 \in (0,\Delta^0)$,\\
\textbf{Input:} $w^0 \in H^1(\Omega) \cap \{ w \in L^\infty(\Omega) \,|\, w(x) \in \{0,1\}\ \text{a.e.} \}$.
\end{flushleft}
\begin{algorithmic}[1]
	\State $\cvxflag \gets 1$
	\For{$n = 0,\ldots$}
	\If{$\cvxflag = 1$}
	\State\label{ln:tr_step_cvx} $\tilde{w}^n \gets$ minimizer of \ref{eq:tr_cvx}$(w^n, \Rgrad j(w^{n}), \Delta^n)$
	\Else
	\State\label{ln:tr_step_gle} $\tilde{w}^n \gets$ minimizer of \ref{eq:tr_gle}$(w^n, \Rgrad j(w^{n}), \Delta^n)$
	\EndIf	
	\If{sufficient decrease \eqref{eq:suffdec} and $\ared(w^n,\tilde{w}^n,\varepsilon^n) > \kappa^n \underline{\Delta}^n$}\label{ln:suff_dec}
	\State $w^{n+1} \gets \tilde{w}^n$
	\State $\Delta^{n+1} \gets \min\{\Delta^0,2 \Delta^n\}$
	\State $\cvxflag \gets 1$
	\Else
	\State $w^{n+1} \gets w^n$
	\State $\Delta^{n+1} \gets \Delta^n / 2$
	\EndIf
	\If{$\Delta^{n+1} < \underline{\Delta}^n$ \text{ and } $\cvxflag = 1$}
	\State $\cvxflag \gets 0$
	\State $\Delta^{n+1} \gets \Delta^0$
	\State $\underline{\Delta}^{n+1} \gets \underline{\Delta}^n$
	\State $\varepsilon^{n+1} \gets \varepsilon^n$
	\State $\kappa^{n+1} \gets \kappa^n$
	\ElsIf{$\Delta^{n+1} < \underline{\Delta}^n$}\label{ln:reduce_epsilon}
	\State $\cvxflag \gets 1$
	\State $\Delta^{n+1} \gets \Delta^0$
	\State $\underline{\Delta}^{n+1} \gets \underline{\Delta}^n / 2$
	\State $\varepsilon^{n+1} \gets \varepsilon^n / r$
	\State $\kappa^{n+1} \gets \kappa^n / 2$	    
	\Else
	\State $\underline{\Delta}^{n+1} \gets \underline{\Delta}^n$
	\State $\varepsilon^{n+1} \gets \varepsilon^n$
	\State $\kappa^{n+1} \gets \kappa^n$	    
	\EndIf
	\EndFor
\end{algorithmic}
\end{algorithm}

For the sufficient-decrease condition in \cref{alg:trm} ln.\ \ref{ln:suff_dec}, we consider
$\rho \in (0,1)$ and choose the acceptance criterion
\begin{gather}\label{eq:suffdec}
\ared(w^n,\tilde{w}^n,\varepsilon^n) \ge \rho \pred(w^n, \Delta^n, \varepsilon^n)
\end{gather} 
with the \emph{actual reduction} $\ared$ of the objective that is defined by
\[ \ared(w, v, \varepsilon) \coloneqq j(w) + \gamma E_{\varepsilon}(w) 
- j(v) - \gamma E_{\varepsilon}(v)
\]
and the \emph{predicted reduction} $\pred$ that is defined by the solution of the trust-region subproblem as
\[ \pred(w, \Delta, \varepsilon) \coloneqq -\left\{
\begin{aligned}
\text{minimal objective value of }\text{{\ref{eq:tr_cvx}}}(w, \Rgrad j(w), \Delta) & \text{ if } \cvxflag = 1,\\
\text{minimal objective value of }\text{{\ref{eq:tr_gle}}}(w, \Rgrad j(w), \Delta) & \text{ if } \cvxflag = 0.
\end{aligned}
\right.
\]

\section{Asymptotics of \cref{alg:trm}}\label{sec:asymptotics}

In order to analyze the limits of iterates generated
by \cref{alg:trm}, we need to ensure their
existence and feasibility (that is, their
binarity) for \eqref{eq:p_abstract}, which is
the purpose of \cref{sec:compactness}.
We continue by proving that the limits produced
by \cref{alg:trm} are L-stationary for 
\eqref{eq:p_abstract} in the sense of
\cref{dfn:stationarity} under certain
regularity conditions in
\cref{sec:asymptotic_stationarity}.

\subsection{Boundedness of iterates and compactness}\label{sec:compactness}

A standard way of proving boundedness of the iterates
produced by optimization algorithms and in turn weak-$^*$ convergence
of a subsequence is to combine the boundedness
of the objective from below with descent properties of the produced
sequence of iterates. In our situation, descent of the objective
values is only guaranteed for a fixed value of $\varepsilon^n$ so
that the objective value may increase when $\varepsilon^n$ is reduced
and thus the objective term itself changes from one iteration to the next.
We recover convergence of subsequences of the iterates in $L^1(\Omega)$
by combining a compactness result with competitor constructions that
use sufficient descent properties of the algorithm during the iterations
when $\varepsilon^n$ is not changed to ensure that the objective values
have to remain bounded despite the changes of the objective term itself.
We briefly state the compactness result due to~\cite{fonseca1989gradient},
on sequences $w^\varepsilon$ that are bounded with respect to
the functionals $j + \gamma E_\varepsilon$ for $\varepsilon \searrow 0$, 
which guarantees the convergence of a subsequence of iterates produced by
\cref{alg:trm} to a binary-valued limit under boundedness
of $(E_\varepsilon(w^\varepsilon))_\varepsilon$.
\begin{proposition}\label{prp:compactness}
Let $\varepsilon \searrow 0$,
$(w^\varepsilon)_\varepsilon \subset H^1(\Omega)$
with $0 \le w^\varepsilon(x) \le 1$ for a.e.\ $x \in \Omega$.
Let there exist $C > 0$ such that $j(w^\varepsilon) + \gamma E_\varepsilon(w^\varepsilon) \le C$
for all $\varepsilon$. Then $w^\varepsilon$ has a subsequence that converges
in $L^p(\Omega)$ for all $p \in [1,\infty)$ and the $L^p(\Omega)$-limit
is $\{0,1\}$-valued and satisfies
$P_\Omega(w^{-1}(\{1\})) \le \liminf_{\varepsilon\to 0} E_\varepsilon(w^\varepsilon)$.
\end{proposition}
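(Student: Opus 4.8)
The plan is to combine the classical Modica–Mortola (Modica 1987) compactness result for the Ginzburg–Landau functionals with the given uniform energy bound. First I would extract from the hypothesis $j(w^\varepsilon) + \gamma E_\varepsilon(w^\varepsilon) \le C$ a uniform bound on the energies themselves. Since $0 \le w^\varepsilon \le 1$ a.e., the first term $j(w^\varepsilon)$ is controlled from below: using that $j$ is continuously Fréchet differentiable (\cref{ass:general_var}) together with the uniform $L^\infty$-bound, the sequence $(w^\varepsilon)_\varepsilon$ is bounded in $L^2(\Omega)$, hence $j(w^\varepsilon)$ is bounded below by some constant. Therefore $\gamma E_\varepsilon(w^\varepsilon) \le C - \inf_\varepsilon j(w^\varepsilon) =: C'$, giving a uniform bound on the Ginzburg–Landau energies along the sequence.

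With $\sup_\varepsilon E_\varepsilon(w^\varepsilon) < \infty$ in hand, I would invoke the compactness statement of the Modica–Mortola theory, as made precise for gradient-constrained sequences in \cite{fonseca1989gradient} (and in the form used in Proposition 1 of \cite{modica1987gradient}). The mechanism is the standard one: writing $G(t) = \int_0^t \sqrt{\Psi(s)}\,ds$ for the primitive of the square root of the double-well potential, the pointwise inequality $\frac{\varepsilon}{2}|\nabla w^\varepsilon|^2 + \frac{1}{\varepsilon}\Psi(w^\varepsilon) \ge 2\sqrt{\Psi(w^\varepsilon)}\,|\nabla w^\varepsilon| = 2|\nabla (G\circ w^\varepsilon)|$ shows that the composed functions $G\circ w^\varepsilon$ are uniformly bounded in $\BV(\Omega)$. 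Compactness of bounded sets in $\BV$ with respect to $L^1$-convergence then yields a subsequence with $G\circ w^{\varepsilon} \to \zeta$ in $L^1(\Omega)$ and a.e. Because $G$ is strictly monotone with continuous inverse on $[0,1]$, this transfers to $L^1$-convergence of $w^\varepsilon$ itself to some limit $w$; the uniform $L^\infty$-bound upgrades this to $L^p(\Omega)$-convergence for every $p \in [1,\infty)$ via dominated convergence.

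The final step is to identify the limit as $\{0,1\}$-valued and to establish the liminf inequality for the perimeter. The binarity follows because $\frac{1}{\varepsilon}\int_\Omega \Psi(w^\varepsilon)\,\dd x \le C'/\gamma$ forces $\int_\Omega \Psi(w^\varepsilon)\,\dd x \to 0$, so $\Psi(w) = 0$ a.e. along the a.e.-convergent subsequence, meaning $w(x) \in \{0,1\}$ a.e. The perimeter bound $P_\Omega(w^{-1}(\{1\})) \le \liminf_{\varepsilon \to 0} E_\varepsilon(w^\varepsilon)$ is exactly the lower-bound ($\Gamma$-liminf) half of the Modica–Mortola theorem after normalizing the double-well so that $C_0 = 2\int_0^1 \sqrt{\Psi(s)}\,\dd s$: the limit of $G\circ w^\varepsilon$ recovers a multiple of $\chi_{w^{-1}(\{1\})}$ whose total variation is the perimeter, and lower semicontinuity of total variation under $L^1$-convergence combined with the pointwise energy inequality above delivers the claim.

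I expect the main obstacle to be the bookkeeping of constants, specifically verifying that the constant $C_0$ appearing in \eqref{eq:p} is precisely the one produced by the Modica–Mortola limit of the chosen $\Psi$, so that the asserted inequality holds without a spurious scaling factor; this is a normalization issue rather than a conceptual one. The genuinely nontrivial input — the $\BV$-compactness of $G\circ w^\varepsilon$ and the $\Gamma$-liminf inequality — is entirely supplied by \cite{fonseca1989gradient} and \cite{modica1987gradient}, so the proof is essentially a citation combined with the elementary lower bound on $j$ that converts the combined bound into an energy bound.
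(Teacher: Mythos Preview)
Your proposal is correct and follows essentially the same approach as the paper's own proof, which is a brief citation of \cite[Theorem~4.1]{fonseca1989gradient} for the $L^1$-compactness and \cite[Proposition~1]{modica1987gradient} for the lower semicontinuity, combined with the $L^\infty$-bound to upgrade to $L^p$-convergence. Your version simply spells out the Modica--Mortola mechanism (the $G\circ w^\varepsilon$ argument and the binarity via $\int_\Omega \Psi(w^\varepsilon)\to 0$) that the paper leaves to the references. One small caveat: deducing a lower bound on $j(w^\varepsilon)$ from continuous Fr\'echet differentiability alone is not automatic in infinite dimensions, since $C^1$ functions need not be bounded on non-compact bounded sets; the paper does not derive this either but simply asserts uniform boundedness of $j$ on the $[0,1]$-valued functions, and the later results \cref{prp:boundedness_simple,prp:boundedness_involved} take this as an explicit hypothesis.
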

\begin{proof}
The existence of an $L^1(\Omega)$-limit follows from the compactness result
\cite[Theorem\,4.1]{fonseca1989gradient} and the uniform boundedness of
$F$ on $\{ w \in L^\infty(\Omega)\,|\, 0 \le w(x) \le 1 \text{ for a.e.\ } x \in \Omega\}$.
The $L^\infty$-bounds on the $w^\varepsilon$ also yield convergence in $L^p(\Omega)$
for all $p \in [1,\infty)$ and the lower semi-continuity is due to
\cite[Proposition 1]{modica1987gradient}.
\end{proof}
In order to leverage \cref{prp:compactness} in the convergence analysis of \cref{alg:trm}, we need the existence of a bounded
subsequence. We pose this as an assumption below and then proceed by asserting it under appropriate assumptions.
\begin{assumption}\label{ass:algorithm}
For $k \in \N$, let $n(k) \in \N$ be the last iteration
such that $\varepsilon^{n(k)} = \varepsilon^0 r^{-k}$.
We assume that the subsequence $(w^{n(k)})_k$ of
the iterates $(w^n)_n$ produced by \cref{alg:trm} is bounded.
\end{assumption}
\Cref{ass:algorithm} is well defined if $n(k)$
exists for all $k$, which we prove in
\cref{lem:finitely_many_iterations_per_epsilon}
below.
\begin{lemma}\label{lem:finitely_many_iterations_per_epsilon}
Let $n_0 \in \N$ be the first iteration
of \cref{alg:trm} with $\varepsilon^n = \varepsilon^{n_0}$. Then there are only
finitely many iterations $n \ge n_0$ such that
$\varepsilon^{n} = \varepsilon^{n_0}$.
\end{lemma}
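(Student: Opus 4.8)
The plan is to show that the phase during which $\varepsilon^n$ stays fixed at $\varepsilon^{n_0}$ comprises only finitely many iterations, by bounding separately the accepted steps, the convex-to-nonconvex mode switches, and the rejected steps occurring within such a phase. The key structural observation is that $\underline{\Delta}^n$, $\kappa^n$, and $\varepsilon^n$ are modified only in the branch at ln.~\ref{ln:reduce_epsilon} of \cref{alg:trm}, which divides $\varepsilon^n$ by $r$ and therefore ends the phase. Hence throughout the phase we have $\underline{\Delta}^n \equiv \underline{\Delta}^{n_0}$ and $\kappa^n \equiv \kappa^{n_0}$, both strictly positive constants, and the objective functional $F \coloneqq j + \gamma E_{\varepsilon^{n_0}}$ is fixed. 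This reduction to a constant model and a constant acceptance threshold is what makes the counting possible.

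First I would bound the number of accepted steps. On a rejected step the iterate is unchanged, so $F(w^{n+1}) = F(w^n)$, while on an accepted step the criterion in ln.~\ref{ln:suff_dec} (in particular the condition combined with \eqref{eq:suffdec}) forces $\ared(w^n, \tilde{w}^n, \varepsilon^{n_0}) = F(w^n) - F(w^{n+1}) > \kappa^{n_0}\underline{\Delta}^{n_0}$. Since $E_{\varepsilon^{n_0}} \ge 0$ and $j$ is bounded below, $F$ is bounded below, so telescoping this uniform decrease shows that at most $(F(w^{n_0}) - \inf F)/(\kappa^{n_0}\underline{\Delta}^{n_0})$ accepted steps can occur in the phase; call this finite number $A$.

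Next I would bound the number of times the branch with $\cvxflag = 1$ and $\Delta^{n+1} < \underline{\Delta}^n$ is taken, i.e.\ the switches into the non-convex mode. Every accepted step resets $\cvxflag \gets 1$, and within the phase the only other event touching $\cvxflag$ is precisely this branch, which sets it to $0$; therefore each such switch after the first must be preceded by an accepted step that restored $\cvxflag = 1$, so the number of mode switches is at most $A + 1$. Finally, between any two consecutive events (accepted steps or mode switches) only rejected steps occur, each merely halving the radius from a value at most $\Delta^0$; after at most $\lceil \log_2(\Delta^0/\underline{\Delta}^{n_0})\rceil$ consecutive rejections the radius drops below $\underline{\Delta}^{n_0}$ and forces an event (a mode switch, or the $\varepsilon$-reduction that ends the phase). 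This caps the rejected steps per block, and summing over the finitely many blocks bounds the total iteration count of the phase.

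The main obstacle will be handling the $\cvxflag$ variable carefully: one must rule out indefinite oscillation between the convex and non-convex modes at fixed $\varepsilon^{n_0}$. This is resolved by the observation that every return to $\cvxflag = 1$ inside a phase is triggered by an accepted step, so the finiteness of accepted steps caps the number of oscillations; together with the lower bound on $F$ — which uses only the non-negativity of $E_{\varepsilon^{n_0}}$ and the standing lower bound on $j$ — this closes the argument. Since an infinite run at $\varepsilon^{n_0}$ would require infinitely many iterations, the phase must be finite, and its only exit is the branch at ln.~\ref{ln:reduce_epsilon} that reduces $\varepsilon$.
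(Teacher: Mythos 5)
Your proposal is correct and follows essentially the same route as the paper's own proof: finitely many accepted steps because each acceptance decreases the bounded-below objective $j+\gamma E_{\varepsilon^{n_0}}$ by more than the constant $\kappa^{n_0}\underline{\Delta}^{n_0}$, only acceptances can restore $\cvxflag=1$ within the phase, and repeated halving of the radius from at most $\Delta^0$ down past $\underline{\Delta}^{n_0}>0$ forces an event after finitely many rejections. Your version merely makes the block-by-block counting more explicit than the paper's terser statement of the same argument.
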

\begin{proof}
We note that the iterations $n \ge n_0$
with $\varepsilon^n = \varepsilon^{n_0}$
also satisfy $\kappa^n = \kappa^{n_0}$.
Because the objective $j + \gamma E_{\varepsilon^n}$
is bounded below, there can only be finitely many iterates
that are accepted in \cref{alg:trm} ln.\ \ref{ln:suff_dec}
and thus only finitely many iterations, in which $\cvxflag$ is set
to one. The trust-region radius is halved if an iterate is not accepted, so
that $\cvxflag$ is set to zero eventually and not set to one anymore afterward.
Since there is no accepted iterate anymore, the trust-region
radius is halved with $\cvxflag = 0$
until the condition in
\cref{alg:trm} ln.\ \ref{ln:reduce_epsilon}
is met and $\varepsilon^n$ is halved. This happens after finitely many
iterations because $\underline{\Delta}^n > 0$
holds inductively.
\end{proof}
The boundedness of the iterates that allows to
apply \cref{prp:compactness} can be ensured under boundedness 
assumptions on $j$ and its gradient, which we prove below.

After the auxiliary result \cref{lem:nonincreasing_for_const_eps}, \cref{prp:boundedness_simple}
shows the boundedness of the sequence of iterates for the case that the reset trust-region
radius $\Delta^0$ is so large that all of $\Omega$ is contained in the trust region
after the reset occurs. In particular, the zero function is feasible and satisfies
$\gamma E_\varepsilon(0) = 0$. Consequently, if the penalty parameter becomes
small, the regularization dominates the objective in the subproblem in the event of an
unbounded sequence so that the function $0$ is a competitor that reduces the subproblem
solution to lead to a contradiction.
Then \cref{prp:boundedness_involved} shows the boundedness
for smaller values of $\Delta^0$. The proof is based on the same general idea as
\cref{prp:boundedness_simple} but the function $0$ may not be feasible and
therefore, we provide a more involved construction and estimates for a 
feasible point of the trust-region subproblems for the initial
trust-region radius $\Delta^0$ that satisfies our needs.
\begin{lemma}\label{lem:nonincreasing_for_const_eps}
\Cref{alg:trm} produces a sequence of iterates with non-increasing objective values
over subsequent iterations during which $\varepsilon^n$ is not altered.
\end{lemma}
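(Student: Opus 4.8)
The plan is to track the combined objective value $\Phi^n \coloneqq j(w^n) + \gamma E_{\varepsilon^n}(w^n)$ along the iterations and to exploit that, whenever $\varepsilon^n$ is held fixed from one iteration to the next, the actual reduction $\ared$ is \emph{exactly} the decrease of this objective. Concretely, I would fix an index $n$ with $\varepsilon^{n+1} = \varepsilon^n =: \bar\varepsilon$ and write $\Phi(w) \coloneqq j(w) + \gamma E_{\bar\varepsilon}(w)$, so that both $\Phi^n = \Phi(w^n)$ and $\Phi^{n+1} = \Phi(w^{n+1})$ are measured with respect to one and the same functional $E_{\bar\varepsilon}$. Unfolding the definition of $\ared$ then gives $\ared(w^n, \tilde{w}^n, \varepsilon^n) = \Phi(w^n) - \Phi(\tilde{w}^n)$.

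The argument then splits into the two cases determined by the acceptance test in ln.\ \ref{ln:suff_dec}. If the candidate $\tilde{w}^n$ is accepted, then $w^{n+1} = \tilde{w}^n$ and the acceptance requirement $\ared(w^n, \tilde{w}^n, \varepsilon^n) > \kappa^n \underline{\Delta}^n$ reads $\Phi(w^n) - \Phi(w^{n+1}) > \kappa^n \underline{\Delta}^n$. Since $\kappa^n > 0$ and $\underline{\Delta}^n > 0$ hold inductively (both are only ever halved, starting from positive inputs, as already used in \cref{lem:finitely_many_iterations_per_epsilon}), this yields the strict decrease $\Phi(w^{n+1}) < \Phi(w^n)$. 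If instead $\tilde{w}^n$ is rejected, then $w^{n+1} = w^n$, so that $\Phi(w^{n+1}) = \Phi(w^n)$. In both cases $\Phi(w^{n+1}) \le \Phi(w^n)$, which is precisely the asserted monotonicity over any stretch of iterations on which $\varepsilon^n$ is unchanged.

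The one point requiring a moment of care is to confirm that these two cases genuinely exhaust all iterations within a constant-$\varepsilon$ stretch, i.e.\ that $\varepsilon^n$ is never altered on an accepted step (otherwise $\Phi^n$ and $\Phi^{n+1}$ would involve different functionals and the comparison would be meaningless). This I would settle by inspecting the branching logic of \cref{alg:trm}: acceptance sets $\cvxflag \gets 1$, whereas the sole branch that divides $\varepsilon^n$ by $r$ is the \textbf{ElseIf} in ln.\ \ref{ln:reduce_epsilon}, which is entered only when $\cvxflag = 0$ at that point, and hence only on a rejected step. Consequently $\varepsilon$ can change only on rejections, stays fixed on every accepted step, and the comparison of $\Phi$ across consecutive constant-$\varepsilon$ iterations is always legitimate.

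I expect no substantial obstacle: once $\ared$ is recognized as the objective decrease for fixed $\varepsilon$, the acceptance criterion directly certifies a strict decrease, and a rejection leaves the iterate — and thus the objective — untouched. The only bookkeeping is the $\cvxflag$ control-flow check above, which guarantees that the regularization functional is held constant exactly over the iterations we compare.
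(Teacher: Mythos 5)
Your proof is correct and follows essentially the same route as the paper's: a case split on acceptance versus rejection, with rejection leaving the iterate (hence the objective) unchanged and acceptance forcing a decrease. The only cosmetic difference is that you certify the decrease via the second acceptance conjunct $\ared(w^n,\tilde{w}^n,\varepsilon^n) > \kappa^n \underline{\Delta}^n > 0$, whereas the paper uses the sufficient-decrease condition \eqref{eq:suffdec} together with $\pred(w^n,\Delta^n,\varepsilon^n) \ge 0$ from \cref{prp:tr_existence}; both are immediate, and your explicit check that $\varepsilon$ can only change on rejected steps is a welcome piece of bookkeeping the paper leaves implicit.
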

\begin{proof}
The iterate $w^{n+1}$ coincides with the previous iterate $w^n$ if the computed update $\tilde{w}^n$
is rejected. Thus, under the condition $\varepsilon^{n+1} = \varepsilon^n$, the objective value only changes
if a step is accepted. Specifically, it changes by $\ared(w^n, w^{n+1}, \varepsilon^n)$.
Because \cref{prp:tr_existence} asserts that $\pred(w^n, \Delta^n, \varepsilon^n)$ is non-negative,
the sufficient decrease condition \eqref{eq:suffdec} and \cref{alg:trm} ln.\ \ref{ln:suff_dec}
yield the claim.
\end{proof}
\begin{proposition}\label{prp:boundedness_simple}
Let $j$ and $\nabla_R j$ be bounded on the unit ball in $L^\infty(\Omega)$.
Let $(w^n)_n \subset \{ w \in H^1(\Omega)\,|\, 0 \le w(x) \le 1 \text{ for a.e.\ } x \in \Omega\}$
be the sequence of iterates produced by \cref{alg:trm}. Let $\Delta^0 \ge |\Omega|$.
Then there exist $C > 0$ and
a subsequence  $(w^{n_k})_k \subset (w^n)_n$ such that
\[ \sup_{k \in \N} j(w^{n_k}) + \gamma E_{\varepsilon^{n_k}}(w^{n_k}) < C < \infty. \]
\end{proposition}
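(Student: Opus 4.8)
The goal is to produce a subsequence of iterates along which $j + \gamma E_\varepsilon$ stays uniformly bounded. The plan is to exploit the special structure granted by $\Delta^0 \ge |\Omega|$: after every reset of the trust-region radius to $\Delta^0$, the constant-zero function lies inside the trust region. Indeed, for any iterate $w^n$ with values in $[0,1]$ we have $\|0 - w^n\|_{L^2(\Omega)}^2 \le \|w^n\|_{L^1(\Omega)} \le |\Omega| \le \Delta^0$, so $0$ is feasible for both \eqref{eq:tr_gle} and \eqref{eq:tr_cvx} at radius $\Delta^0$. Since $\gamma E_\varepsilon(0) = 0$, this competitor will let me bound the Ginzburg--Landau energy along the distinguished subsequence.

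\textbf{Choosing the subsequence and deriving the competitor bound.} First I would pin down the subsequence. For each $k$ let $n_0(k)$ be the first iteration at which $\varepsilon^n = \varepsilon^0 r^{-k}$; by \cref{lem:finitely_many_iterations_per_epsilon} this is well defined and the block of iterations with this fixed $\varepsilon$ is finite. At iteration $n_0(k)$ the radius has just been reset to $\Delta^0$, so $0$ is a feasible competitor for the subproblem solved there. The plan is to use this competitor to control the energy at the \emph{end} of the previous block rather than directly: concretely, I would compare the accepted value against the competitor value $(g, 0 - w^{n_0(k)})_{L^2} + \gamma E_{\varepsilon}(0) - \gamma E_{\varepsilon}(w^{n_0(k)})$, whose sign (using $\pred \ge 0$ from \cref{prp:tr_existence}) forces $(g,0-w^{n_0(k)})_{L^2} - \gamma E_\varepsilon(w^{n_0(k)}) \le 0$, i.e.\ $\gamma E_\varepsilon(w^{n_0(k)}) \ge (g, -w^{n_0(k)})_{L^2} \ge -\|g\|_{L^2}\|w^{n_0(k)}\|_{L^2}$. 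This is a lower bound and therefore the wrong direction; the useful inequality comes from monotonicity instead.

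\textbf{The monotonicity mechanism.} The key engine is \cref{lem:nonincreasing_for_const_eps}: within a block of constant $\varepsilon^n$ the objective $j + \gamma E_{\varepsilon^n}$ is non-increasing. So the largest value of the objective in the $k$-th block occurs at its first iterate $w^{n_0(k)}$. I would therefore track the quantity $a_k := j(w^{n_0(k)}) + \gamma E_{\varepsilon^{n_0(k)}}(w^{n_0(k)})$ across blocks. Going from the end of block $k$ to the start of block $k+1$, the iterate is unchanged (the $\varepsilon$-reduction step in ln.~\ref{ln:reduce_epsilon} copies $w^{n+1} \gets w^n$), but $\varepsilon$ drops from $\varepsilon^0 r^{-k}$ to $\varepsilon^0 r^{-(k+1)}$, which can \emph{increase} the energy because of the $\tfrac1\varepsilon \Psi$ term. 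The clean way to absorb this is to invoke the zero competitor at the start of block $k+1$: the accepted/non-increasing value $a_{k+1}$ is bounded by the objective attainable from the first subproblem solve, and feasibility of $0$ together with non-positivity of the predicted reduction gives $j(w^{n_0(k+1)+1}) + \gamma E(w^{n_0(k+1)+1}) \le j(w^{n_0(k+1)}) + \gamma E(w^{n_0(k+1)})$ while the competitor caps the energy at the first accepted step by $j(0) + \gamma E(0) = j(0)$ up to the linear term. Using boundedness of $j$ on the unit ball in $L^\infty(\Omega)$ (so $|j| \le M$ there) and $\gamma E(0)=0$, I expect to obtain a uniform bound $a_k \le 2M + \gamma E_{\varepsilon^{n_0(k)}}(w^{n_0(k)})$ with the energy term itself controlled by the competitor at the block start, closing the estimate.

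\textbf{Main obstacle.} The delicate point — and the one I would spend the most care on — is bridging the $\varepsilon$-reduction step, where the same binary-ish iterate is re-measured with a smaller $\varepsilon$ and its energy can jump. The resolution is that the zero competitor is available precisely \emph{after} each reset, so although the energy may spike when $\varepsilon$ shrinks, the very next subproblem solve can reduce the objective back down toward $j(0)$ by moving toward $0$; combined with \cref{lem:nonincreasing_for_const_eps} this prevents the bound from accumulating across the infinitely many blocks. Making this rigorous requires carefully checking that the competitor argument controls $E_\varepsilon(w^{n_0(k)})$ uniformly in $k$ rather than merely per block, which is where the hypothesis $\Delta^0 \ge |\Omega|$ is essential: it guarantees $0$ is feasible at \emph{every} reset regardless of the current iterate, so the bound $\gamma E_{\varepsilon}(w^{n_0(k)+1}) \le (g, w^{n_0(k)})_{L^2} \le \|g\|_{L^2}|\Omega|^{1/2}$ holds with a constant independent of $k$, yielding $C$.
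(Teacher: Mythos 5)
You have the right competitor ($w \equiv 0$, feasible at radius $\Delta^0 \ge |\Omega|$ with $E_{\varepsilon}(0) = 0$), the right use of \cref{lem:nonincreasing_for_const_eps}, and you correctly locate the difficulty at the $\varepsilon$-reduction steps. But your resolution of that difficulty has a genuine gap: you never show that the candidate step produced by the subproblem solve is \emph{accepted}. Comparing the minimizer $\bar w^n$ of the subproblem against the competitor $0$ only tells you that the \emph{candidate} has small energy, $\gamma E_{\varepsilon^n}(\bar w^n) \le -(\Rgrad j(w^n), \bar w^n)_{L^2} \lesssim C_g$; the algorithm only moves to $\bar w^n$ if the test in \cref{alg:trm} ln.~\ref{ln:suff_dec} passes, i.e.\ $\ared \ge \rho\,\pred$ and $\ared > \kappa^n \underline{\Delta}^n$. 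If the step is rejected, then $w^{n+1} = w^n$, the radius is halved (so $0$ need no longer be feasible and your competitor bound is unavailable for the remaining solves of the block), and the large energy simply persists into the next $\varepsilon$-reduction --- exactly the runaway scenario you are trying to exclude. Your phrase ``the very next subproblem solve can reduce the objective back down toward $j(0)$'' silently replaces ``can'' by ``does''.

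The paper closes this gap quantitatively: with $C_j$ and $C_g$ the uniform bounds on $|j(w)|$ and on $(\Rgrad j(w), d)_{L^2}$ over the unit $L^\infty$-ball, the zero competitor gives both $\pred(w^n,\Delta^0,\varepsilon^n) \le \gamma E_{\varepsilon^n}(w^n) + C_g$ and $\ared(w^n,\bar w^n,\varepsilon^n) \ge \gamma E_{\varepsilon^n}(w^n) - 2C_g - 2C_j$, so that whenever $\gamma E_{\varepsilon^n}(w^n) \ge C_E \coloneqq (1-\rho)^{-1}((2+\rho)C_g + 2C_j)$ the ratio $\ared/\pred$ automatically exceeds $\rho$ and the step \emph{must} be accepted, reducing the objective by at least the fixed positive amount $C_E - 2C_g - 2C_j$; see \eqref{eq:accept_for_gE_large_enough}. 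Since $j \ge -C_j$, only finitely many such forced decreases can occur consecutively, so within each $\varepsilon$-block the energy is eventually driven below $C_E$, and those iterates form the bounded subsequence. The missing idea is therefore not the competitor but the acceptance mechanism: both $\ared$ and $\pred$ are comparable to $\gamma E_{\varepsilon^n}(w^n)$ up to additive constants, which is what turns ``a low-energy candidate exists'' into ``the algorithm actually takes it''.
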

\begin{proof}
We define constants for uniform bounds on the value of $j$ and the first term of the objective of the trust-region subproblem:
\begin{align*}
C_j    &\coloneqq 
\sup\{ |j(w)| \,|\, w \in L^2(\Omega) \text{ and } \|w\|_{L^\infty} \le 1\} \text{ and}\\
C_{g} &\coloneqq 
\sup\{ (\Rgrad j(w),d)_{L^2}\,|\, w, d \in L^2(\Omega) 
\text{ and } \|w\|_{L^\infty} \le 1, \|d\|_{L^\infty} \le 1 \}.
\end{align*}
Because of the assumption $\Delta^0 \ge |\Omega|$, the trust-region constraint is not present in
(or does not restrict) the trust-region subproblems of the form
$\text{{\ref{eq:tr_gle}}}(w^n, \nabla_R j(w^n), \Delta^0)$ and
$\text{{\ref{eq:tr_cvx}}}(w^n, \nabla_R j(w^n), \Delta^0)$,
which implies that all $w \in H^1(\Omega)$ with $0 \le w(x) \le 1$ for a.e.\ $x \in \Omega$ are feasible
with finite objective value for the trust-region subproblems. We obtain
\[ \gamma E_{\varepsilon^n}(w^n) + C_{g} \ge \pred(w^n,\Delta^0,\varepsilon^n)
\ge \gamma E_{\varepsilon^n}(w^n) - C_{g}
\]
because $0$ is feasible and $E_{\varepsilon^n}(0) = 0$ and for any solution 
$\bar{w}^n \in \argmin \text{{\ref{eq:tr_gle}}}(w^n, \nabla_R j(w^n), \Delta^0)$
and $\cvxflag = 1$ that
\[ \ared(w^n,\bar{w}^n,\varepsilon^n) \ge 
\gamma E_{\varepsilon^n}(w^n) - \gamma E_{\varepsilon^n}(\bar{w}^n) - 2 C_j
\ge \gamma E_{\varepsilon^n}(w^n) - 2 C_g - 2 C_j,
\]
where the second inequality follows from the optimality of $\bar{w}^n$,
which gives
\[ (\Rgrad j(w^n), \bar{w}^n - w^n)_{L^2} + \gamma E_{\varepsilon^n}(\bar{w}^n)
\le -(\Rgrad j(w^n), w^n)_{L^2}
\]   
and thus
\[ -2C_g \le -\gamma E_{\varepsilon^n}(\bar{w}^n). \]
Consequently, if $\gamma E_{\varepsilon^n}(w^n) \ge C_E \coloneqq (1 - \rho)^{-1}\left((2 + \rho) C_g + 2 C_j\right)$,
we have
\begin{gather}\label{eq:accept_for_gE_large_enough}
\frac{\ared(w^n,\bar{w}^n,\varepsilon^n)}{\pred(w^n,\Delta^0,\varepsilon^n)}
\ge \frac{\gamma E_{\varepsilon^n}(w^n) - 2 C_g - 2 C_j}{\gamma E_{\varepsilon^n}(w^n) + C_{g}}
\ge \rho.
\end{gather}
\Cref{lem:nonincreasing_for_const_eps} and the design of \cref{alg:trm}
give that the objective can only increase if $\varepsilon^n = \varepsilon^{n - 1}/r$. 
In this case, the trust-region radius was reduced below $\underline{\Delta}^{n-1}$, implying that
$w^n = w^{n-1}$ by design of \cref{alg:trm}.

This means: if there is some iteration such that
\[ j(w^n) + \gamma E_{\varepsilon^n}(w^n) \ge j(w^n) + C_E > j(w^{n}) + \gamma E_{\varepsilon^{n-1}}(w^n) \]
and thus also $\gamma E_{\varepsilon^n}(w^n) \ge C_E$ for some $n \in \N$, then
\cref{lem:nonincreasing_for_const_eps,lem:finitely_many_iterations_per_epsilon}
give that there is some $k \in \N$ such that $\varepsilon^{n} = \ldots = \varepsilon^{n + k}$
\[ j(w^n) + \gamma E_{\varepsilon^n}(w^n)
\ge \ldots \ge 
j(w^{n+k}) + \gamma E_{\varepsilon^n}(w^{n+k}) \]
with $\cvxflag = 1$ and in the next iteration $n + k + 1$ we have
\[ \varepsilon^{n+k+1} = \varepsilon^{n+k} \text{ and } \Delta^{n+k+1} = \Delta^0
\text{ and } \cvxflag = 1.
\]
Then \eqref{eq:accept_for_gE_large_enough} implies that $\bar{w}^{n+k+1}$ is accepted
and the objective is reduced by at least $C_E - 2 C_g - 2 C_j$.
Such iterations occur consecutively
at least until the value of $\gamma E_{\varepsilon^{n+k+1}}(w^{n+k+1})$ falls below
$C_E$ because the algorithm reduces the objective until it sets $\cvxflag$ to $1$ and the trust-region radius
to $\Delta^0$ after further finitely many iterations,
in which case \eqref{eq:accept_for_gE_large_enough} implies a further acceptance and subsequent reduction of
the objective by at least $C_E - 2 C_g - 2 C_j$.

Because $j(w^n) \le C_j$ holds for all $n \in \N$, the considerations above yield that there is a subsequence,
indexed by $k$, such that $j(w^{n_k}) + \gamma E_{\varepsilon^{n_k}}(w^{n_k}) \le C_j + C_E$.
\end{proof}
\begin{proposition}\label{prp:boundedness_involved}
Let $\Omega$ be convex.
Let $j$ and $\nabla_R j$ be bounded on the unit ball in $L^\infty(\Omega)$.
Let $(w^n)_n \subset \{ w \in H^1(\Omega)\,|\, 0 \le w(x) \le 1 \text{ for a.e.\ } x \in \Omega\}$
be the sequence of iterates produced by \cref{alg:trm}.
Then there exist $C > 0$ and
a subsequence  $(w^{n_k})_k \subset (w^n)_n$ such that
\[ \sup_{k \in \N} j(w^{n_k}) + \gamma E_{\varepsilon^{n_k}}(w^{n_k}) < C < \infty. \]
\end{proposition}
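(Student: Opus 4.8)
The plan is to follow the blueprint of \cref{prp:boundedness_simple} and to isolate the single new ingredient that $\Delta^0\ge|\Omega|$ provided there: a comparison point that is feasible for the reset radius $\Delta^0$ and carries much less Ginzburg--Landau energy than the current iterate. I reuse the constants $C_j,C_g$ and the bookkeeping driven by \cref{lem:nonincreasing_for_const_eps,lem:finitely_many_iterations_per_epsilon} essentially verbatim. The only place where $\Delta^0\ge|\Omega|$ entered was the use of $w\equiv 0$ (feasible, with $E_{\varepsilon}(0)=0$) as the competitor in \eqref{eq:accept_for_gE_large_enough}. For $\Delta^0<|\Omega|$ the zero function is infeasible, so I will construct, for every iterate $w^n$ whose energy is large, a competitor $\hat w^n\in H^1(\Omega;[0,1])$ with $\|\hat w^n-w^n\|_{L^2}^2\le\Delta^0$ and $E_{\varepsilon^n}(\hat w^n)$ well below $E_{\varepsilon^n}(w^n)$.

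First I would record a sharpened acceptance test that avoids the fractional-reduction argument of \cref{prp:boundedness_simple}, which degrades as $\Delta^0$ shrinks. Restricting to non-convex steps ($\cvxflag=0$), where both $\ared$ and $\pred$ use the true energy, a direct computation gives $\ared(w^n,\bar w^n,\varepsilon^n)=\pred(w^n,\Delta^n,\varepsilon^n)+\big(j(w^n)-j(\bar w^n)+(\Rgrad j(w^n),\bar w^n-w^n)_{L^2}\big)$, and the bracket is bounded in modulus by $2C_j+C_g$ because $w^n,\bar w^n$ are $[0,1]$-valued. Hence $\ared\ge\pred-(2C_j+C_g)$, so that whenever $\pred\ge(2C_j+C_g)/(1-\rho)$ the sufficient-decrease condition \eqref{eq:suffdec} holds; by enlarging the threshold we additionally force $\ared>\kappa^0\underline\Delta^0\ge\kappa^n\underline\Delta^n$. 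Since the minimizer $\bar w^n$ of \eqref{eq:tr_gle} satisfies $\pred\ge(\Rgrad j(w^n),w^n-\hat w^n)_{L^2}+\gamma\big(E_{\varepsilon^n}(w^n)-E_{\varepsilon^n}(\hat w^n)\big)\ge-C_g+\gamma\big(E_{\varepsilon^n}(w^n)-E_{\varepsilon^n}(\hat w^n)\big)$, it suffices that the competitor lower the energy by more than a fixed \emph{additive} constant once $E_{\varepsilon^n}(w^n)$ is large. That an additive — rather than a fractional — reduction is enough is precisely what makes small $\Delta^0$ tractable.

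The heart of the proof is the competitor. Fix $\rho_0>0$ with $|B_{2\rho_0}|=\Delta^0$. Covering $\bar\Omega$ by balls of radius $\rho_0/2$ and summing energy contributions, a pigeonhole argument yields a center $x_0$ with $E_{\varepsilon^n}(w^n;B_{\rho_0/2}(x_0)\cap\Omega)\ge\beta\,E_{\varepsilon^n}(w^n)$ for a fixed $\beta=\beta(\Delta^0,|\Omega|,d)>0$. I then set $\hat w^n$ equal to $w^n$ outside a ball $B_{\rho^*}(x_0)$ with $\rho^*\in[\rho_0,2\rho_0]$, equal to $0$ in its core, and interpolating across a shell of width $\mathcal O(\varepsilon^n)$ adjacent to $\partial B_{\rho^*}\cap\Omega$, concretely $\hat w^n=w^n\,g(\dist(\cdot,\Omega\setminus B_{\rho^*})/\varepsilon^n)$ for a fixed cut-off profile $g$. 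Then $\|\hat w^n-w^n\|_{L^2}^2\le|B_{2\rho_0}|=\Delta^0$, so $\hat w^n$ is feasible. A direct estimate shows the shell contributes at most $C_{\mathrm{trans}}\,\Ha^{d-1}(\partial B_{\rho^*}\cap\Omega)+2E_{\varepsilon^n}(w^n;\mathrm{shell})$ to the energy — the potential term and the $|\nabla\dist|/\varepsilon^n$ gradient term being uniformly bounded precisely because the shell is $\mathcal O(\varepsilon^n)$ thin — while the core contributes nothing. Choosing $\rho^*$ by a slicing/averaging argument over $[\rho_0,2\rho_0]$ so that $E_{\varepsilon^n}(w^n;\mathrm{shell})\le(L\varepsilon^n/\rho_0)E_{\varepsilon^n}(w^n)$, I obtain
\[
E_{\varepsilon^n}(w^n)-E_{\varepsilon^n}(\hat w^n)\ \ge\ \Big(\beta-\tfrac{L\varepsilon^n}{\rho_0}\Big)E_{\varepsilon^n}(w^n)-C_{\mathrm{trans}},
\]
which, once $\varepsilon^n$ is small enough that $L\varepsilon^n/\rho_0\le\beta/2$, is at least $\tfrac{\beta}{2}E_{\varepsilon^n}(w^n)-C_{\mathrm{trans}}$. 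Together with the previous paragraph this produces a threshold $C_E$ such that $E_{\varepsilon^n}(w^n)\ge C_E$ forces acceptance of the non-convex step with objective decrease at least a fixed $\delta_0>0$.

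With this in hand the remainder mirrors \cref{prp:boundedness_simple}. By \cref{lem:nonincreasing_for_const_eps} the objective is non-increasing within each $\varepsilon$-phase and it is bounded below, so at most finitely many forced decreases of size $\delta_0$ occur; hence within each phase one reaches a $\cvxflag=0$ reset iteration with $E_{\varepsilon^n}(w^n)<C_E$, after which monotonicity yields $\gamma E_{\varepsilon^n}\le 2C_j+\gamma C_E$ up to the final iterate $w^{n(k)}$ of the phase, and combined with $j\le C_j$ this furnishes the bounded subsequence. The main obstacle is the competitor construction: realising the energy reduction and the feasibility \emph{simultaneously} for an arbitrary iterate forces the transition layer to be $\mathcal O(\varepsilon^n)$-thin (to bound its energy uniformly in $\varepsilon^n$) and, via the slicing choice of $\rho^*$, prevents $w^n$'s own energy on the cut sphere from cancelling the gain; convexity of $\Omega$ is what I would use to guarantee that $B_{\rho^*}(x_0)\cap\Omega$ and its adjacent shell are geometrically regular so that all these estimates are uniform in the carve-out location $x_0$.
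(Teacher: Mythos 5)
Your proof is correct, and while the outer bookkeeping (the constants $C_j$, $C_g$, the acceptance threshold $C_E$, and the phase-by-phase monotonicity argument via \cref{lem:nonincreasing_for_const_eps,lem:finitely_many_iterations_per_epsilon}) matches the paper, the key construction of the competitor is genuinely different. The paper slices $\Omega$ along a coordinate direction into a \emph{fixed finite family} of $2R-1$ overlapping convex slabs whose cores cover $\Omega$, pigeonholes over this family to capture a $\tfrac{1}{R+1}$-fraction of the energy (see \eqref{eq:large_enough_Tn}), and builds the transition layer from the optimal ODE profile $q_\tau$, whose layer energy it must then show converges (claims \eqref{eq:claim1} and \eqref{eq:claim1a}, Modica-style, uniformly over the finite family) so that it is eventually dominated; convexity of $\Omega$ is used to keep the slabs and their perimeters under control. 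You instead cover $\Omega$ by balls, pigeonhole to find a ball carrying a fixed energy fraction $\beta$, carve out a concentric ball whose radius $\rho^*\in[\rho_0,2\rho_0]$ is chosen \emph{adaptively} by averaging over disjoint $\mathcal{O}(\varepsilon^n)$-shells so that the iterate's own energy on the cut shell is only $\mathcal{O}(\varepsilon^n/\rho_0)\,E_{\varepsilon^n}(w^n)$, and use a generic Lipschitz cut-off whose layer energy admits a crude uniform bound $C_{\mathrm{trans}}$ rather than a sharp limit. This buys two things: it sidesteps the delicate $\delta\searrow 0$ convergence claims and the need for a uniform index $\tilde n$ over a prescribed family of competitor sets, and it arguably does not use convexity of $\Omega$ at all (the ball, its shell, and the distance function are regular regardless of $\partial\Omega$), so your argument is, if anything, slightly more general; your explicit additive identity $\ared=\pred+\bigl(j(w^n)-j(\bar w^n)+(\Rgrad j(w^n),\bar w^n-w^n)_{L^2}\bigr)$ also replaces the paper's ratio estimate \eqref{eq:accept_for_gE_large_enough} cleanly. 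The only points to spell out in a full write-up are the disjointness of the energy-carrying ball $B_{\rho_0/2}(x_0)$ from the chosen shell for small $\varepsilon^n$ (so the captured $\beta$-fraction really is removed) and the final observation that each $\varepsilon$-phase can only terminate through a failed non-convex round starting at radius $\Delta^0$, which forces $\gamma E_{\varepsilon^{n(k)}}(w^{n(k)})<C_E$ at the phase's last iterate; both are implicit in what you wrote and unproblematic.
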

\begin{proof}
We consider the case $\Delta^0 < |\Omega|$ and define the ratio $R \coloneqq \left\lceil \frac{|\Omega|}{\Delta^0} \right\rceil + 1$.
Without loss of generality, that is, after possibly applying a suitable translation to $\Omega$, we
assume that $\inf \{x_1\,|\,x \in \Omega\} = 0$. Moreover, we define
$\ell \coloneqq \sup\{x_1 \,|\, x \in \Omega\}$. Then, by virtue of Fubini's theorem,
we obtain
\[ |\Omega| = \int_{0}^{\ell} \lambda_{\R^{d-1}}(\Omega_t) \dd t
\text{ with } \Omega_t = \{ (a + t e_1)_{2,\ldots,d} \,|\, a \in \{e_1\}^\perp \text{ and }
a + t e_1 \in \Omega \}
\text{ for } t \in [0,\ell]
\]
and where $\lambda_{\R^{d-1}}$ denotes the Lebesgue measure on $\R^{d-1}$. The function
$s\mapsto \int_{0}^{s} \lambda_{\R^{d-1}}(\Omega_t) \dd t$ is continuous and we thus obtain
$0 = s_0 \le s_a < s_b \le s_L = \ell$ so that
\[ |S| = \int_{s_{a}}^{s_{b}} \lambda_{\R^{d-1}}(\Omega_t) \dd t = \frac{|\Omega|}{R}
\]
for a slice $S \coloneqq \bigcup_{t=s_{a}}^{s_b} \Omega_t$. All such slices $S$ have a finite perimeter because we may estimate
their perimeter from above by $P_{\R^d}(\Omega) + 2 (\diam \Omega)^{d-1}$, which is finite.
All such slices $S$ are also convex because they are intersections of two half-spaces with the convex set $\Omega$.
We will approximate $\chi_S$ smoothly along $e_1$ following the ideas in~\cite{modica1987gradient,modica5esempio}. Then, we will use this approximation
in order to construct a competitor that is feasible for the trust-region subproblem
and reduces the objective value substantially enough to contradict that
the sequence of iterates produced by \cref{alg:trm} is unbounded with respect
to the objective values.

To this end, we first analyze $S$ independently of its position along
$e_1$. Our required choices for $S$ will be defined later. We construct a set $\hat{S}$ from $S$ by extending it in the directions in $\{e_1\}^\perp$ in order to obtain a cylinder with positive distance of its edges to $\Omega$ and then smoothing the edges. By making this
extension large enough, these properties can be established independently of the specific choice of $S$. This set satisfies $\hat{S} \cap \Omega = S$, $\Ha^{d-1}(\partial \hat{S} \cap \partial \Omega) = 0$, and
\begin{gather}\label{eq:perimeter_equality_S_Shat}
P_\Omega(\hat{S}) = \Ha^{d-1}(\partial \hat{S} \cap \Omega) = \Ha^{d-1}(\partial S \cap \Omega) = P_\Omega(S).
\end{gather}

The situation is sketched in \cref{fig:sketch_set}.
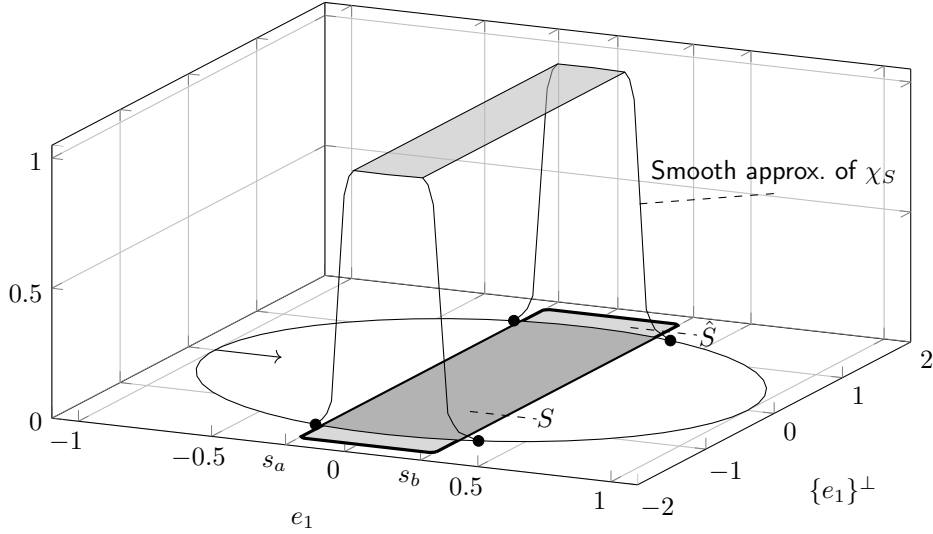
\begin{figure}[h]
\begin{center}
\begin{tikzpicture}
\begin{axis}[
height=8cm,
width=13cm,
grid,
xmin=-1.1,
xmax=1.1,
ymin=-2,
ymax=2,
zmin=0.0,
zmax=1.05,
extra x ticks={-0.2225209,0.2845276},
extra x tick labels={$s_a$,$s_b$},	 
xlabel={$e_1$},
ylabel={$\{e_1\}^\perp$} 
]

\addplot3[draw=black,mark=*]
coordinates {
(0.3500000,1.4051245,0)
(0.3500000,-1.4051245,0)
};

\addplot3[draw=black,mark=*]
coordinates {
(-0.2500000,1.4523688,0)
(-0.2500000,-1.4523688,0)
};

\addplot3[draw=black,fill=white] coordinates
{
(1.0000000,0.0000000,0)
(0.9917900,0.1918157,0)
(0.9672949,0.3804819,0)
(0.9269168,0.5629005,0)
(0.8713187,0.7360763,0)
(0.8014136,0.8971658,0)
(0.7183494,1.0435238,0)
(0.6234898,1.1727472,0)
(0.5183926,1.2827141,0)
(0.4047833,1.3716189,0)
(0.2845276,1.4380018,0)
(0.1595999,1.4807727,0)
(0.0320516,1.4992293,0)
(-0.0960230,1.4930687,0)
(-0.2225209,1.4623919,0)
(-0.3453651,1.4077026,0)
(-0.4625383,1.3298990,0)
(-0.5721167,1.2302584,0)
(-0.6723009,1.1104170,0)
(-0.7614460,0.9723426,0)
(-0.8380881,0.8183024,0)
(-0.9009689,0.6508256,0)
(-0.9490557,0.4726623,0)
(-0.9815592,0.2867379,0)
(-0.9979454,0.0961053,0)
(-0.9979454,-0.0961053,0)
(-0.9815592,-0.2867379,0)
(-0.9490557,-0.4726623,0)
(-0.9009689,-0.6508256,0)
(-0.8380881,-0.8183024,0)
(-0.7614460,-0.9723426,0)
(-0.6723009,-1.1104170,0)
(-0.5721167,-1.2302584,0)
(-0.4625383,-1.3298990,0)
(-0.3453651,-1.4077026,0)
(-0.2225209,-1.4623919,0)
(-0.0960230,-1.4930687,0)
(0.0320516,-1.4992293,0)
(0.1595999,-1.4807727,0)
(0.2845276,-1.4380018,0)
(0.4047833,-1.3716189,0)
(0.5183926,-1.2827141,0)
(0.6234898,-1.1727472,0)
(0.7183494,-1.0435238,0)
(0.8014136,-0.8971658,0)
(0.8713187,-0.7360763,0)
(0.9269168,-0.5629005,0)
(0.9672949,-0.3804819,0)
(0.9917900,-0.1918157,0)
(1.0000000,-0.0000000,0)
};

\addplot3[draw=black, very thick, fill=black!15!white] coordinates 
{
	(0.2845276,-1.770018,0)
	(0.2845276,1.770018,0)
	(0.2795276,1.782518,0)	
	(0.2695276,1.7975018,0)
	(0.2645276,1.800018,0)	
	(-0.2025209,1.800018,0)
	(-0.2075209,1.7975018,0)
	(-0.2175209,1.782518,0)			
	(-0.2225209,1.770018,0)
	(-0.2225209,-1.770018,0)	
	(-0.2175209,-1.782518,0)
	(-0.2075209,-1.7975018,0)	
	(-0.2025209,-1.800018,0)	
	(0.2645276,-1.800018,0)		
	(0.2695276,-1.7975018,0)	
	(0.2795276,-1.782518,0)					
	(0.2845276,-1.770018,0)	
};

\addplot3[draw=black, fill=black!30!white] coordinates 
{
	(0.2845276,1.4380018,0)
	(0.1595999,1.4807727,0)
	(0.0320516,1.4992293,0)
	(-0.0960230,1.4930687,0)
	(-0.2225209,1.4623919,0)
	(-0.2225209,-1.4623919,0)
	(-0.0960230,-1.4930687,0)
	(0.0320516,-1.4992293,0)
	(0.1595999,-1.4807727,0)
	(0.2845276,-1.4380018,0)
	(0.2845276,1.4380018,0)
};

\addplot3[draw=black] coordinates 
{
	(0.1595999,1.480772673917228,1)
	(0.178339055,1.4759536436788387, 0.975)
	(0.19707820999999998,1.4705816546768802,0.9)
	(0.22206375,1.4625482230018467,.5)
	(0.24704928999999998,1.4535043717507752,0.1)
	(0.265788445,1.446047070684452,0.06)
	(0.2845276,1.4380018,0.03)
	(0.3500000,1.4051245,0)
};

\addplot3[draw=black] coordinates 
{
	(-0.2500000,1.4523688,0)
	(-0.2225209,1.4623919,0.025)
	(-0.203546215,1.4685980087512516,0.05)
	(-0.18457152999999998,1.4742286248086771,0.1)
	(-0.15927195,1.4808521206967948,0.5)
	(-0.13397237,1.4864776013019207,0.9)
	(-0.114997685,1.4900486394747126,0.975)
	(-0.0960230,1.4930687,1)
};

\addplot3[draw=black] coordinates 
{
	(0.1595999,-1.480772673917228,1)
	(0.178339055,-1.4759536436788387, 0.975)
	(0.19707820999999998,-1.4705816546768802,0.9)
	(0.22206375,-1.4625482230018467,.5)
	(0.24704928999999998,-1.4535043717507752,0.1)
	(0.265788445,-1.446047070684452,0.06)
	(0.2845276,-1.4380018,0.03)
	(0.3500000,-1.4051245,0)
};

\addplot3[draw=black] coordinates 
{ 
	(-0.2500000,-1.4523688,0)
	(-0.2225209,-1.4623919,0.025)
	(-0.203546215,-1.4685980087512516,0.05)
	(-0.18457152999999998,-1.4742286248086771,0.1)
	(-0.15927195,-1.4808521206967948,0.5)
	(-0.13397237,-1.4864776013019207,0.9)
	(-0.114997685,-1.4900486394747126,0.975)
	(-0.0960230,-1.4930687,1)
};

\addplot3[draw=black, fill=black!30!white,fill opacity=0.5] coordinates 
{ (0.1595999,1.4807727,1)
	(0.0320516,1.4992293,1)
	(-0.0960230,1.4930687,1)
	(-0.0960230,-1.4930687,1)
	(0.0320516,-1.4992293,1)
	(0.1595999,-1.4807727,1)
	(0.1595999,1.4807727,1)
};

\addplot3[->,draw=black] coordinates
{
	(-1,0,0)
	(-0.75,0,0) 
};

\addplot3[black,point meta=explicit symbolic,nodes near coords] coordinates {(0.53,-1.125,0)[$S$]};

\addplot3[draw=black,dashed] coordinates 
{ 
	(0.15,-0.75,0)
	(0.4,-0.75,0) 
};

\addplot3[black,point meta=explicit symbolic,nodes near coords] coordinates {(0.55,1.17,0)[$\hat{S}$]};

\addplot3[draw=black,dashed] coordinates 
{ 
	(0.15,1.6,0)
	(0.4,1.6,0) 
};

\addplot3[black,point meta=explicit symbolic,nodes near coords] coordinates {(0.8,1.17,0.65)[Smooth approx.\ of $\chi_{S}$]};
\addplot3[draw=black,dashed] coordinates 
{ 
	(0.215,1.4777303000000002,0.5)
	(0.8,1.17,0.65) 
};

\end{axis}
\end{tikzpicture}
\end{center}
\caption{Positioning of $S$ (dark grey) with $s_a$
and $s_b$ inside $\Omega$ (white ellipse)
along $e_1$, extended set $\hat{S}$ (light gray, extending to the exterior of $\Omega$),
and smooth approximation of $\chi_S$.}\label{fig:sketch_set}
\end{figure}
To this end, we define the signed distance function as 
\[ h(x) \coloneqq \left\{
\begin{aligned}
\dist(x, \partial \hat{S} \cap \Omega) & \text{ if } x \in \hat{S} \\
- \dist(x, \partial \hat{S} \cap \Omega) & \text{ else}
\end{aligned}
\right. \]
We approximate the jump function on $\R$, $\chi_{(-\infty,0)}$, smoothly with a function $q$ that solves
the ordinary differential equation
\[ -q''(s) + W'(q(s)) = 0 \text{ for } s \in \R \]
with 
\[ q'(s) = \sqrt{2 W(q(s))} > 0, \]
where $W(q) = q(1 - q)$. To make the solution unique, we set $q(0) = \frac{1}{2}$.
Let $\tau > 0$ be fixed but arbitrary and let $a = \frac{q'(\tau)^2}{4q(\tau)}$ and $b = 2 \frac{q(\tau)}{q'(\tau)}$.
We define 
\[ q_\tau(s) \coloneqq \left\{
\begin{aligned}
0    & \text{ if } s \in (-\infty,-\tau - b], \\
\tilde{q}_\tau(s) & \text{ if } s \in (-\tau - b,-\tau), \\
q(s) & \text{ if } s \in [-\tau,\tau], \\
1 - \tilde{q}_\tau(-s) & \text{ if } s \in (\tau,\tau + b), \\
1  & \text{ if } s \in [\tau + b,\infty), \\
\end{aligned}
\right. \]
where $\tilde{q}_\tau(s) = a(s - (-\tau - b))^2$ is a parabola with vertex at $(-\tau - b)$ such that
$q_\tau \in C^2(\R)$ and $\supp q_\tau' = [-\tau - b,\tau + b]$.
We define $v_{\delta,\tau} \coloneqq q_\tau \circ (h / \delta)$ and obtain
$\frac{\delta}{2}|\nabla v_{\delta,\tau}(x)|^2 = \frac{1}{2\delta} (q_\tau'(h(x)/\delta))^2$
because the chain rule gives $\nabla v_{\delta,\tau}(x) = \frac{1}{\delta} q_{\tau}'(h(x)/\delta) \nabla h(x)$
and $\nabla h(x)$ exists and satisfies $\|\nabla h(x)\| = 1$ a.e. Moreover, we can prove the following assertion.

\textbf{Claim:}
\begin{gather}\label{eq:claim1} 
\begin{multlined}I_{\delta,\tau} \coloneqq \int_S \frac{\delta}{2} |\nabla v_{\delta,\tau}(x)|^2 + \frac{1}{\delta}W(v_{\delta,\tau}(x)) \dd x\\
\to \left(\int_{\R} \frac{1}{2} q_\tau'(s)^2 + W(q_\tau(s))\dd s\right) P_\Omega(S)
\text{ for } \delta \searrow 0.
\end{multlined}
\end{gather}
and 
\begin{gather}\label{eq:claim1a} 
\begin{multlined}I^c_{\delta,\tau} \coloneqq \int_{\Omega \setminus S} \frac{\delta}{2} |\nabla v_{\delta,\tau}(x)|^2 + \frac{1}{\delta}W(v_{\delta,\tau}(x)) \dd x\\
\to \left(\int_{\R} \frac{1}{2} q_\tau'(s)^2 + W(q_\tau(s))\dd s\right) P_\Omega(S)
\text{ for } \delta \searrow 0.
\end{multlined}
\end{gather}
\textbf{Proof of Claim:}
Because the length of the support interval of $s \mapsto q_{\tau}(s/\delta)$ shrinks linearly with $\delta$
we can assume that $\delta$ is small enough such that the length of the support is strictly smaller than
the halved distance between the two half-spaces that cut $S$ and $\hat{S}$ into two pieces of width $(s_b - s_a) / 2$.
We combine this insight with Fubini's theorem and obtain
\begin{align*}
I_{\delta,\tau} &=
\int_{ \{e_1\}^\perp} \int_{-\infty}^{\frac{s_a + s_b}{2}} \chi_S(y + t e_1) 
\left(\frac{1}{2\delta}q_\tau'(h(y + t e_1)/\delta)^2 + \frac{1}{\delta}W(q_{\tau}(h(y + t e_1)/\delta))\right)
\dd t \dd \Ha^{d-1}(y) \\
&\enskip + \int_{ \{e_1\}^\perp} \int_{\frac{s_a + s_b}{2}}^\infty \chi_S(y + t e_1) 
\left(\frac{1}{2\delta}q_\tau'(h(y + t e_1)/\delta)^2 + \frac{1}{\delta}W(q_{\tau}(h(y + t e_1)/\delta))\right)
\dd t \dd \Ha^{d-1}(y) \\
&= \int_{ \{e_1\}^\perp} \int_{0}^{\frac{s_b - s_a}{2}} \chi_S(y + s_a + t e_1) 
\left(\frac{1}{2\delta}q_\tau'(t/\delta)^2 + \frac{1}{\delta}W(q_{\tau}(t/\delta))\right)
\dd t \dd \Ha^{d-1}(y)\\
&\enskip + 
\int_{ \{e_1\}^\perp} \int_{0}^{\frac{s_b - s_a}{2}} \chi_S(y + s_b -  t e_1) 
\left(\frac{1}{2\delta}q_\tau'(t/\delta)^2 + \frac{1}{\delta}W(q_{\tau}(t/\delta))\right)
\dd t \dd \Ha^{d-1}(y)\\
&= \int_{ \{e_1\}^\perp} \int_{0}^{\frac{s_b - s_a}{2\delta}} \chi_S(y + s_a + t\delta e_1) 
\left(\frac{1}{2}q_\tau'(t)^2 + W(q_{\tau}(t))\right)
\dd t \dd \Ha^{d-1}(y)\\
&\enskip + 
\int_{ \{e_1\}^\perp} \int_{0}^{\frac{s_b - s_a}{2\delta}} \chi_S(y + s_b - t\delta e_1) 
\left(\frac{1}{2}q_\tau'(t)^2 + W(q_{\tau}(t))\right)
\dd t \dd \Ha^{d-1}(y)\\
&=  \int_{-\infty}^{\infty} \int_{\{e_1\}^\perp} \chi_S(y + s_a + t\delta e_1)\dd \Ha^{d-1}(y)
\left(\frac{1}{2}q_\tau'(t)^2 + W(q_{\tau}(t))\right) \dd t\\
&\enskip + 
\int_{-\infty}^{\infty} \int_{\{e_1\}^\perp} \chi_S(y + s_b - t\delta e_1) \dd \Ha^{d-1}(y)
\left(\frac{1}{2}q_\tau'(t)^2 + W(q_{\tau}(t))\right)
\dd t
\end{align*}
for all $\delta$ small enough such that $\frac{s_b - s_a}{2\delta} > \tau + b$, which implies $q_{\tau}'(t) = 0$
and $W(q_\tau(t)) = 0$ for all $t > \frac{s_b - s_a}{2\delta}$. Note that we have used that $h$ is defined with respect
to $\hat{S}$ in order to obtain the second equality. For $t \in [0,\tau + b]$, we consider the term
\[ f_\delta(t) \coloneqq \int_{\{e_1\}^\perp} \chi_S(y + s_b - t\delta e_1) + \chi_S(y + s_b - t\delta e_1)  \dd \Ha^{d-1}(y). \]
Again, because $h$ is defined with respect to $\hat{S}$ and not $S$, we observe for all $\delta$ small enough that
\[ f_\delta(t) = \Ha^{d-1}(\{x \in S\,|\, h(x) = \delta t\}) = \Ha^{d-1}(\Omega \cap \{x \in \hat{S}\,|\, h(x) = \delta t\}). \]
Moreover, $\hat{S}$ satisfies the assumptions of Lemma 3 in \cite{modica1987gradient}, from which we thus obtain
\[  f_\delta(t) \to \Ha^{d-1}(\partial \hat{S} \cap \Omega) \underset{\eqref{eq:perimeter_equality_S_Shat}}= P_\Omega(S),
\]
for $\delta \searrow 0$.
Moreover, $\Ha^{d-1}(\{x \in \Omega\,|\, h(x) = \delta t\}) \le \diam(\Omega)^{d-1}$ holds for all $t$ due to the axis-parallel construction
of $S$ and $\hat{S}$. Thus we may employ Lebesgue's dominated convergence theorem in order to obtain \eqref{eq:claim1}. A very similar chain of arguments 
and the equality $P_\Omega(S) = P_\Omega(\Omega\setminus S)$ yield 
\eqref{eq:claim1a}. Thus, the claim consisting of \eqref{eq:claim1} and 
\eqref{eq:claim1a} is proven.

Next, we define
\begin{align*}
C_j    &\coloneqq 
\sup\{ |j(w)| \,|\, w \in L^2(\Omega) \text{ and } \|w\|_{L^\infty} \le 1\} \text{ and}\\
C_{g} &\coloneqq 
\sup\{ (\Rgrad j(w),d)_{L^2}\,|\, w, d \in L^2(\Omega) 
\text{ and } \|w\|_{L^\infty} \le 1, \|d\|_{L^\infty} \le 1 \}
\end{align*}
and proceed with a contradictory argument towards the overall claim that there is a subsequence of iterates, which is
bounded with respect to the objective. First, \cref{lem:finitely_many_iterations_per_epsilon} and the
definition of $R$ give for any arbitrary but fixed $\tau > 0$ that there exists $n_0 \in \N$ such that for all $n \ge n_0$
it holds that
\begin{gather}\label{eq:trfeasibility_prep}
2\diam(\Omega)^{d-1}(\tau + b)\varepsilon^n + \frac{|\Omega|}{R} \le \Delta^0.
\end{gather}
If there is no bounded subsequence, we obtain for all $M > 0$ that there is $n_M \ge n_0$ so that
for all $n \ge n_M$
\[ j(w^n) + \gamma E_{\varepsilon^{n}}(w^{n}) \ge M. \] 
We can assume wlog that $n_M$ is the first such iteration, implying $\Delta^{n_M} = \Delta^0$,
because the objective can only increase in the event of a reduction of $\varepsilon$. The inequality
gives 
\[ \gamma E_{\varepsilon^{n}}(w^{n}) \ge M - C_j \]
for all $n \ge n_M$. Now, we consider finitely many sets $S_1$, $\ldots$, $S_{2R -1}$ 
as starting points for possible competitors that are constructed as follows.
We define
\[ S_i = \bigcup_{t = s_a^i}^{s_b^i} \Omega_t \]
for all $i \in \{1,\ldots,2R - 1\}$. For $i \in \{1, 3, \ldots, 2R - 1\}$, we choose
the scalars $0 \le s_a^i < s_b^i \le \ell$ such that $s_a^{i + 2} = s_b^i$
and $|S_i| = |\Omega|/R$ as above so that they decompose $\Omega$ into $R$
pieces of equal Lebesgue measure. Moreover, we construct for the remaining $R - 1$ sets $S_i$,
$i \in \{2, 4, \ldots, 2R - 2\}$, we choose scalars $0 \le s_a^i < s_b^i \le \ell$ such
that $S_i$ is \emph{centered} around $s_a^{i+1}$, that is $(s_a^{i} + s_b^i)/2 = s_a^{i+1}$
and again $|S_i| = |\Omega|/R$.

For any such set $S = S_i$, $i \in \{1,\ldots,2R - 1\}$,
we make the following considerations.
We define
$\tilde{S}^n \coloneqq \supp v_{\varepsilon^n,\tau} \cap \Omega$.
Then
\[ \|\tilde{w}^n - w\|_{L^2}^2
\le \|\tilde{w}^n - w\|_{L^1}
= \|v_{\varepsilon^n,\tau}\|_{L^1} 
\le |\tilde{S}^n|
\le |S| + 2\diam(\Omega)^{d-1}(\tau + b)\varepsilon^n + \frac{|\Omega|}{R}
\underset{\eqref{eq:trfeasibility_prep}}\le \Delta^0
\]
and in turn we obtain that the function
$\tilde{w}^n \coloneqq (1 - v_{\varepsilon^n,\tau}) w^n$ is feasible
for $\text{{\ref{eq:tr_gle}}}(w^n, \nabla_R j(w^n), \Delta^0)$
for all $n \ge n_M$,
where the third inequality follows due to the axis-parallel 
construction of $S$, which implies
$\Ha^{d-1}(\{x \in \Omega\,|\, h(x) = t\}) \le \diam (\Omega)^{d-1}$ for all $t$.
Moreover, we define
$T^n \coloneqq \{ x \in \Omega\,|\,v_{\varepsilon^n,\tau}(x) = 1\}$.
Then $|\tilde{S}^n\setminus T^n| \le \varepsilon^n 2 (\tau + b) \diam(\Omega)^{d-1}$ and
the function $\tilde{w}^n$ coincides with $w^n$ outside $\tilde{S}^n$, is zero inside $T^n$.
It transitions smoothly from $w^n$ to zero in $\tilde{S}^n\setminus T^n$.
Because a reduction of $\tau > 0$ leaves \eqref{eq:trfeasibility_prep} intact for $n \ge n_M$,
for any of the finitely many choices of $S$, we can choose some $\tau > 0$ small enough
so that $|T^n| \ge |\Omega| / (R + 1)$ holds for the corresponding $T^n$. By choosing the
minimal of these $\tau$, this inequality holds uniformly over the possible choices for
$S$. By possibly decreasing $\tau$ further, we also obtain that, independently of
the choice of $S$, the corresponding sets $T^n$ cover $\Omega$.

For all $n \ge n_M$, we can do the following. Because $|T^n| \ge |\Omega| / (R + 1)$
does not depend on the specific choice of $S$, we can choose the set $S = S_i$,
$i \in \{1,\ldots,2R - 1\}$, such that
\begin{gather}\label{eq:large_enough_Tn}
\frac{\varepsilon^n}{2}\|\nabla w\|_{L^2(T^n)}^2
+ \frac{1}{\varepsilon^n}
\int_{T^n} w^n(1 - w^n) \dd x \ge \frac{1}{R + 1}
E_{\varepsilon^n}(w^n)
\end{gather}
holds for all $n \ge n_M$. Such a choice can be made because the induced options
for $T^n$ (by the choice of $S$) cover $\Omega$. Then
\begin{multline*}
E_{\varepsilon^{n}}(\tilde{w}^{n})
= \frac{\varepsilon^n}{2}
\|\nabla w^n\|_{L^2(\Omega\setminus \tilde{S}^n)}^2
+ \frac{1}{\varepsilon^n}
\int_{\Omega\setminus \tilde{S}^n} w^n(1 - w^n) \dd x\\
+ \underbrace{\frac{\varepsilon^n}{2}
\|\nabla \tilde{w}^n\|_{L^2(\tilde{S}^n \setminus T^n)}^2
+ \frac{1}{\varepsilon^n} \int_{\tilde{S}^n \setminus T^n}
\tilde{w}^n(1 - \tilde{w}^n)
\dd x}_{\eqqcolon A^n}.
\end{multline*}

\textbf{Claim:} For all $n$ large enough, it holds that
\begin{gather}\label{eq:claim2}
E_{\varepsilon^{n}}(w^{n})
- E_{\varepsilon^{n}}(\tilde{w}^{n})
\ge 
\frac{1}{2(R + 1)}
E_{\varepsilon^{n}}(w^n).
\end{gather}
\textbf{Proof of Claim:} The product rule yields
\begin{multline}\label{eq:An_upper_bound}
A^n \le \frac{\varepsilon^n}{2}
\|\nabla w^n\|_{L^2(\tilde{S}^n \setminus T^n)}^2
+ \frac{1}{\varepsilon^n} \int_{\tilde{S}^n \setminus T^n}w^n(1 - w^n) \dd x\\
+ \underbrace{\frac{\varepsilon^n}{2}\|1 - v_{\tau,\varepsilon^n}\|_{L^2(\tilde{S}^n)}^2
+ \frac{1}{\varepsilon^n}\int_{\tilde{S}^n} (1 - v_{\tau,\varepsilon^n})v_{\tau,\varepsilon^n} \dd x}_{\eqqcolon B^n},
\end{multline}
where the third and fourth term satisfy
\begin{align*}
B^n
&= 
\frac{\varepsilon^n}{2}\|\nabla v_{\tau,\varepsilon^n}\|_{L^2(\tilde{S}^n)}^2
+ \frac{1}{\varepsilon^n}\int_{\tilde{S}^n} v_{\tau,\varepsilon^n}(1 - v_{\tau,\varepsilon^n}) \dd x\\
&= I_{\varepsilon^n,\tau} + I_{\varepsilon^n,\tau}^c 
\le 3 \left(\int_{\R} \frac{1}{2} q_\tau'(s)^2 + W(q_\tau(s))\dd s\right)
\left(\diam(\Omega)^{d-1} + P_{\R^d}(\Omega)\right) \eqqcolon C_S
\end{align*}
for all large $n \ge \tilde{n}$ for some $\tilde{n} \in \N$
by means of \eqref{eq:claim1} and \eqref{eq:claim1a}. Again, the uniform $\tilde{n} \in \N$
exists because there are only finitely many possible realizations of $S$ considered and in
turn only finitely many sequences $(I_{\varepsilon^n,\tau})_n$ and $(I_{\varepsilon^n,\tau}^c)_n$
need to be taken into account.

By possibly choosing $\tilde{n}$ larger, we obtain for all $n \ge \max\{n_M,\tilde{n}\}$ that
\begin{gather}\label{eq:1mvtaueps_uniform_estimate}
B^n \le C_S \le 
\frac{1}{2(R + 1)} \left(
\frac{\varepsilon^n}{2}
\|\nabla w^n\|^2_{L^2(T^n)}
+ \frac{1}{\varepsilon^n}
\int_{T^n} w^n(1 - w^n) \dd x\right).
\end{gather}
We combine our insights so that we compute and estimate by means of \eqref{eq:large_enough_Tn},
\eqref{eq:An_upper_bound}, and \eqref{eq:1mvtaueps_uniform_estimate}:
\begin{align*}
E_{\varepsilon^{n}}(w^{n})
- E_{\varepsilon^{n}}(\tilde{w}^{n})
&= \frac{\varepsilon^n}{2}\|\nabla w^n\|_{L^2(\tilde{S}^n)}^2 + \frac{1}{\varepsilon^n}\int_{\tilde{S}^n} w^n(1 - w^n)\dd x - A^n\\
&\ge \frac{\varepsilon^n}{2}\|\nabla w\|_{L^2(T^n)}^2 + \frac{1}{\varepsilon^n}\int_{T^n} w^n(1 - w^n) \dd x
- B^n\\
&\ge \frac{1}{2(R + 1)} E_{\varepsilon^n}(w^n),
\end{align*}
which proves the claim \eqref{eq:claim2}.

We continue to prove the overall assertion. To this end, let $\bar{w}^n$ solve
$\text{{\ref{eq:tr_gle}}}(w^n, \nabla_R j(w^n), \Delta^0)$. As in the proof of
\cref{prp:boundedness_simple}, we obtain
\begin{align*}
\gamma E_{\varepsilon^n}(w^n) -
\gamma E_{\varepsilon^n}(\bar{w}^n)
+ C_{g} 
&\ge \pred(w^n,\Delta^0,\varepsilon^n)
\text{ and} \\
\ared(w^n,\bar{w}^n,\varepsilon^n) &\ge 
\gamma E_{\varepsilon^n}(w^n) - \gamma E_{\varepsilon^n}(\bar{w}^n) - 2 C_j. 
\end{align*}
Moreover, the optimality of $\bar{w}^n$ and the feasibility
of $\tilde{w}^n$ for $\text{{\ref{eq:tr_gle}}}(w^n, \nabla_R j(w^n), \Delta^0)$ give
\[ d^n \coloneqq 
\gamma E_{\varepsilon^n}(w^n) 
- \gamma E_{\varepsilon^n}(\bar{w}^n)
\ge
\gamma E_{\varepsilon^n}(w^n) 
- \gamma E_{\varepsilon^n}(\tilde{w}^n)
- 2 C_g
\ge
\frac{1}{2(R + 1)} \gamma E_{\varepsilon^n}(w^n)
- 2 C_g. \]
We obtain that
\[
\frac{\ared(w^n,\bar{w}^n,\varepsilon^n)}{\pred(w^n,\Delta^0,\varepsilon^n)}
\ge 
\frac{d^n - 2 C_j}{d^n + C_g}
\ge \rho
\]
if $d^n \ge (1 - \rho)^{-1}(\rho C_g + 2 C_j)$, which holds
in particular if
$\gamma E_{\varepsilon^n}(w^n) \ge 2(R + 1)((1 - \rho)^{-1}(\rho C_g + 2 C_j) + 2 C_g) \eqqcolon C_E$. Moreover, if $n$ is large enough, this also implies
that 
\[
\ared(w^n, \bar{w}^n, \varepsilon^n) \ge 
\frac{1}{2(R + 1)}\gamma E_{\varepsilon^n}(w^n) - 2 C_g - 2 C_j 
\ge \frac{\rho}{1 - \rho}C_g > 
\kappa^n \underline{\Delta}^n. \]

Now, we apply the same argument as in 
\cref{prp:boundedness_simple}.
Thus there exists $n_0 \in \N$ such that whenever
for $n \ge n_0$ the situation
$\gamma E_{\varepsilon^n}(w^n) \ge C_E$ 
occurs
in the first iteration after $\varepsilon^n$
was decreased and the trust-region radius reset to
$\Delta^0$ for $n \ge n_0$,
the next step is accepted and $\varepsilon^n$ will not
be reduced further before $\gamma E_{\varepsilon^n}(w^n) < C_E$ holds.
Consequently, there exists a subsequence, indexed by $k$, which
abides the bound $\gamma E_{\varepsilon^{n_k}}(w^{n_k}) < C_E$ for all $k \in \N$,
which implies the claim.
\end{proof}

\begin{remark}
The constructions in
\cref{prp:boundedness_simple,prp:boundedness_involved}
together with the fact that the objective decreases
monotonically during subsequent iterations during which
$\varepsilon^n$ is not altered implies that the boundedness
assumption of the iterates $\varepsilon^{n(k)}$ in
\cref{ass:algorithm} are satisfied under the
respective prerequisites of
\cref{prp:boundedness_simple,prp:boundedness_involved}.
\end{remark}

\subsection{Asymptotic L-stationarity}
\label{sec:asymptotic_stationarity}
We are ready to prove the asymptotics of \cref{alg:trm}
under \cref{ass:general_var,ass:algorithm}.
We first prove that the approximating iterates
of limit points also have subsequences so that
the values of the corresponding regularizers converge
to the value of the regularizer in the limit,
that is a multiple of the perimeter of the level set
to the value $1$ in $\Omega$. Second we prove an auxiliary
result that asserts that we can find a function that satisfies the sufficient decrease condition \eqref{eq:suffdec}
using the actual reduction $\ared$ of the unregularized objective values and the predicted reduction $\pred$ given
by the unregularized trust-region subproblem \eqref{eq:tr_sharp}.
Finally, we prove the L-stationarity of the limit points.

\begin{lemma}\label{lem:asymptotics_of_regularizer}
Let \cref{ass:general_var,ass:algorithm} hold. Let
$\bar{w}$ be a limit point in the sense of 
\cref{prp:compactness} of the subsequence $(w^{n(k)})_k$
with $n(k)$ defined as in \cref{ass:algorithm} for all $k \in \N$.
Let $\bar{w}^{-1}(\{1\})$ admit an extension $A$ outside
of $\Omega$, that is to $\R^d$, such that $\partial^* A$ is a compact, smooth hypersurface
and satisfies $\Ha^{d-1}(\partial^* A \cap \partial^*\Omega) = 0$.
Then $\liminf_{k} E_{\varepsilon^{n(k)}}(w^{n(k)}) = C_0 P_\Omega(\bar{w}^{-1}(\{1\}))$.
\end{lemma}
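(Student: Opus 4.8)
The plan is to establish the equality through the two inequalities $C_0 P_\Omega(\bar{w}^{-1}(\{1\})) \le \liminf_k E_{\varepsilon^{n(k)}}(w^{n(k)})$ and $\liminf_k E_{\varepsilon^{n(k)}}(w^{n(k)}) \le C_0 P_\Omega(\bar{w}^{-1}(\{1\}))$. Throughout I pass to the subsequence of $(w^{n(k)})_k$ furnished by \cref{ass:algorithm} and \cref{prp:compactness}, along which $w^{n(k)} \to \bar{w}$ in $L^p(\Omega)$ for all $p \in [1,\infty)$ with uniformly bounded energies. The lower inequality is then immediate: it is the lower-semicontinuity (the $\Gamma$-liminf) estimate of the Modica--Mortola functional recorded in \cref{prp:compactness} (see \cite[Proposition~1]{modica1987gradient}), applied to $w^{n(k)} \to \bar{w}$. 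Hence everything rests on the reverse inequality, for which I would argue by contradiction and exploit that, by construction, $n(k)$ is the \emph{last} iteration carrying the value $\varepsilon^{n(k)}$, i.e. the iteration at which \cref{alg:trm} triggers the $\varepsilon$-reduction in ln.~\ref{ln:reduce_epsilon}.

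Assume $E_{\varepsilon^{n(k)}}(w^{n(k)}) \ge C_0 P_\Omega(\bar{w}^{-1}(\{1\})) + 2\eta$ for some $\eta > 0$ and all large $k$. Using the hypothesis that $\bar{w}^{-1}(\{1\})$ extends to a set $A$ with compact, smooth reduced boundary and $\Ha^{d-1}(\partial^* A \cap \partial^*\Omega) = 0$, I would build an optimal-profile recovery sequence $\hat{w}^k \in H^1(\Omega;[0,1])$ for $\bar{w}$, following the construction underlying \cref{thm:tr_gamma_convergence} (Lemmas~1--2 and Proposition~2 in \cite{modica1987gradient}) but evaluated along the prescribed diagonal $\varepsilon = \varepsilon^{n(k)} \searrow 0$, so that $\hat{w}^k \to \bar{w}$ in $L^1(\Omega)$ and $E_{\varepsilon^{n(k)}}(\hat{w}^k) \to C_0 P_\Omega(\bar{w}^{-1}(\{1\}))$. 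It is precisely the smoothness of $\partial^* A$ that lets one produce a recovery sequence directly against the fixed sequence $\varepsilon^{n(k)}$ rather than through a secondary approximation, while $\Ha^{d-1}(\partial^* A \cap \partial^*\Omega) = 0$ guarantees that no perimeter is lost on $\partial\Omega$.

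Next I would feed $\hat{w}^k$ as a competitor into the non-convex subproblems \eqref{eq:tr_gle} that \cref{alg:trm} solves at $w^{n(k)}$. Since $n(k)$ is the last iteration with this $\varepsilon$, every candidate of the final $\cvxflag = 0$ phase is rejected, so the iterate stays equal to $w^{n(k)}$ while the radius is halved from $\Delta^0$ down below $\underline{\Delta}^{n(k)}$. As $\hat{w}^k \to \bar{w}$ and $w^{n(k)} \to \bar{w}$ in $L^2(\Omega)$, we have $\|\hat{w}^k - w^{n(k)}\|_{L^2}^2 \to 0$, so $\hat{w}^k$ is feasible for \eqref{eq:tr_gle} with inputs $(w^{n(k)}, \Rgrad j(w^{n(k)}), \Delta)$ at every radius $\Delta \ge \|\hat{w}^k - w^{n(k)}\|_{L^2}^2$, and optimality forces $\pred(w^{n(k)}, \Delta, \varepsilon^{n(k)}) \ge \gamma E_{\varepsilon^{n(k)}}(w^{n(k)}) - \gamma E_{\varepsilon^{n(k)}}(\hat{w}^k) - (\Rgrad j(w^{n(k)}), \hat{w}^k - w^{n(k)})_{L^2} \ge \gamma\eta$ for all large $k$, since the scalar-product term vanishes in the limit. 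To turn this into an accepted step I would use the identity $\ared = \pred + \big(j(w^{n(k)}) - j(v) + (\Rgrad j(w^{n(k)}), v - w^{n(k)})_{L^2}\big)$ for the subproblem minimizer $v$, whose bracket is the first-order remainder of $j$. Because the minimizer obeys $\|v - w^{n(k)}\|_{L^1} \le \sqrt{|\Omega|}\,\|v - w^{n(k)}\|_{L^2} \le \sqrt{|\Omega|\Delta}$ and $w^{n(k)} \to \bar{w}$, continuity of $j'$ \emph{at the single point} $\bar{w}$ (\cref{ass:general_var}) bounds this remainder by $(1-\rho)\gamma\eta$ for all radii below a threshold $\Delta_{\mathrm{acc}} \in (0,\Delta^0]$ that is independent of $k$. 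Consequently $\ared \ge \rho\,\pred$, and $\ared \ge \rho\gamma\eta > \kappa^{n(k)}\underline{\Delta}^{n(k)}$ holds as $\kappa^{n(k)}\underline{\Delta}^{n(k)} \to 0$, so the sufficient-decrease test \eqref{eq:suffdec} and the criterion in ln.~\ref{ln:suff_dec} are met. Since $\underline{\Delta}^{n(k)} \to 0$ and $\|\hat{w}^k - w^{n(k)}\|_{L^2}^2 \to 0$, for large $k$ the halving sequence of radii reaches a value in $(\Delta_{\mathrm{acc}}/2, \Delta_{\mathrm{acc}}]$ that still exceeds $\underline{\Delta}^{n(k)}$ and at which $\hat{w}^k$ is feasible; there the candidate is accepted, contradicting that the phase ran down to the $\varepsilon$-reduction at $n(k)$.

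The main obstacle is this final quantitative balancing. One has to make the acceptance radius $\Delta_{\mathrm{acc}}$ independent of $k$ --- which I would extract from continuity of $j'$ at $\bar{w}$ combined with $w^{n(k)} \to \bar{w}$, rather than from any uniform modulus over the whole $L^\infty$-ball --- and then verify that $\Delta_{\mathrm{acc}}$ simultaneously beats the shrinking lower threshold $\underline{\Delta}^{n(k)}$ and the shrinking competitor distance $\|\hat{w}^k - w^{n(k)}\|_{L^2}^2$, so that an acceptable radius is actually visited before $\varepsilon$ is reduced. Producing the recovery sequence directly against the fixed diagonal $\varepsilon^{n(k)}$, which is what the smoothness hypothesis on $\partial^* A$ buys, is the second delicate point.
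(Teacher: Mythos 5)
Your proposal is correct and follows essentially the same route as the paper's proof: argue by contradiction, use the smooth extension $A$ to build a Modica--Mortola recovery sequence along the prescribed diagonal $\varepsilon^{n(k)}$, feed it as a competitor into \eqref{eq:tr_gle} to bound $\pred$ from below, control the first-order remainder of $j$ at small fixed radii to force acceptance, and contradict the definition of $n(k)$. The only (inessential) differences are that the paper additionally splits off the trivial case $\partial^*\bar{w}^{-1}(\{1\})\cap\Omega=\emptyset$ and invokes the quantitative estimate $\|\bar{w}_{\varepsilon^{n(k)}}-\bar{w}\|_{L^1}\lesssim(\varepsilon^{n(k)})^{1/2}$ together with $\varepsilon^{n(k)}\in o((\underline{\Delta}^{n(k)})^2)$, whereas you get feasibility of the competitor at a fixed acceptance radius from the qualitative convergences $\|\hat{w}^k-w^{n(k)}\|_{L^2}^2\to 0$ and $\underline{\Delta}^{n(k)}\to 0$ alone.
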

\begin{proof}
	\Cref{prp:compactness} implies that there is a subsequence of $(w^{n(k)})_k$ that converges
in $L^1(\Omega)$ and because of the $L^\infty(\Omega)$-bounds also in $L^2(\Omega)$.
In order to prove the claim, we restrict to this subsequence,
denote its index by $n(k)$ as well in the interest of a cleaner presentation, and prove the claim for
this subsequence. This is sufficient because \cref{prp:compactness} already gives
$\liminf_{k} E_{\varepsilon^{n(k)}}(w^{n(k)}) \ge C_0 P_\Omega(\bar{w}^{-1}(\{1\}))$.

Because $n(k)$ is the last iteration such that $\varepsilon^{n(k)} = \varepsilon^0 r^{-k}$,
it is clear that $\cvxflag = 0$ and the minimizers $\tilde{w}^{n(k)-j}$ of
\ref{eq:tr_gle}$(w^{n(k)}, \nabla_R j(w^{n(k)}), \Delta^0 2^{-j})$ were rejected in
\cref{alg:trm} ln.\ \ref{ln:suff_dec} for all $j \in \{0,\ldots,k\}$, which implies that
\begin{align}
\ared(w^{n(k)},\tilde{w}^{n(k) - j},\varepsilon^{n(k)})
&\le \kappa^0 \underline{\Delta}^0 2^{-2k} \label{eq:contr_bb} \\
\text{or } \ared(w^{n(k)},\tilde{w}^{n(k) - j},\Delta^0 2^{-j})
&< \rho \pred(w^{n(k)},\Delta^0 2^{-j},\varepsilon^{n(k)}) \label{eq:contra_sd}
\end{align}
We proceed with a proof by contradiction and assume that 
$\liminf_{k} \gamma E_{\varepsilon^{n(k)}}(w^{n(k)}) - P_\Omega(\bar{w}^{-1}(\{1\})) > \delta$ 
holds for some $\delta > 0$. We show that neither of the inequalities above
can hold for sufficiently large values of $k$ so that a step must have been accepted and thus a subsequence
with these properties cannot exist. 
We start by showing that \eqref{eq:contra_sd} cannot hold for large enough $j$ and $k$ (depending on $j$)
and then infer that \eqref{eq:contr_bb} cannot hold as well.

For all $\eta > 0$ there exists $j_0 \in \N$ such that for all $k > j_0$,
all $j \in \{j_0,\ldots,k\}$, and all $w$
that are feasible for \ref{eq:tr_gle}$(w^{n(k)}, \nabla_R j(w^{n(k)}), \Delta^0 2^{-j})$, we obtain the estimates
\begin{align*}
\ared(w^{n(k)},w,\varepsilon^{n(k)}) &= j(w^{n(k)}) - j(w)
+ \gamma E_{\varepsilon^{n(k)}}(w^{n(k)})
- \gamma E_{\varepsilon^{n(k)}}(w)\\
&\ge \gamma E_{\varepsilon^{n(k)}}(w^{n(k)}) - \gamma E_{\varepsilon^{n(k)}}(w) - \eta
\end{align*}
and
\begin{align*}
\left|\pred(w^{n(k)},\Delta^0 2^{-j},\varepsilon^{n(k)})
- \gamma E_{\varepsilon^{n(k)}}(w^{n(k)})
+ \gamma E_{\varepsilon^{n(k)}}(w)\right|
&= |(\Rgrad j(w^{n(k)}), w^{n(k)} - w)_{L^2}|
\le \eta,
\end{align*}
which give
\[ 
\frac{\ared(w^{n(k)},\tilde{w}^{n(k)},\varepsilon^{n(k)})}{\pred(w^{n(k)},\Delta^0 2^{-j_0},\varepsilon^{n(k)})}
\ge 
\frac{\pred(w^{n(k)},\Delta^0 2^{-j_0},\varepsilon^{n(k)}) - 2 \eta}{\pred(w^{n(k)},\Delta^0 2^{-j_0},\varepsilon^{n(k)})}.
\]
In order to continue the estimate, we consider an appropriate feasible approximation of $\bar{w}$ and distinguish two
cases, specifically whether $\partial^* \bar{w}^{-1}(\{1\}) \cap \Omega = \emptyset$ or not.

If $\partial^* \bar{w}^{-1}(\{1\})  \cap \Omega = \emptyset$, then $\bar{w}(x) = 0$ a.e.\ or $\bar{w}(x) = 1$ a.e.,
which gives that $\bar{w}$ is feasible for the subproblems
\ref{eq:tr_gle}$(w^{n(k)}, \nabla_R j(w^{n(k)}), \Delta^0 2^{-j_0})$
for all $k$ large enough. We choose $\eta = \delta\cdot\frac{1 - \rho}{3}$, in particular
$3\eta < \delta$, and obtain for all $k > j_0$ that 
\[ \frac{\ared(w^{n(k)},\tilde{w}^{n(k)},\varepsilon^{n(k)})}{\pred(w^{n(k)},\Delta^0 2^{-j_0},\varepsilon^{n(k)})} \ge \frac{\delta - 3\eta}{\delta} = \rho
\]
because $\bar{w}$ is feasible, implying $\pred(w^{n(k)},\Delta^0 2^{-j_0},\varepsilon^{n(k)})
\ge \delta - \eta$, and the function $x \mapsto \frac{x - 2 \eta}{x}$ is monotonically increasing for
$x \ge 2 \eta$. 
This implies that the step is accepted latemost for the trust-region radius $\Delta^0 2^{-j_0}$
(depending on $\eta$ but not on $k$ for $k > j_0$),
implying that \eqref{eq:contra_sd} cannot hold for $k > j_0$.
For the accepted step $\tilde{w}$ for the trust-region radius  $\Delta^0 2^{-\tilde{j}}$
with $\tilde{j} \le j_0$, we know that
\[ \ared(w^{n(k)},\tilde{w},\varepsilon^{n(k)})
\ge \rho \pred(w^{n(k)},\Delta^0 2^{-\tilde{j}},\varepsilon^{n(k)})
\ge \rho\left(\delta - \eta\right) 
\ge \frac{2\rho}{3}\delta, \]
where the second inequality follows from the optimality of $\tilde{w}$ for the
trust-region subproblem.
Consequently, \eqref{eq:contr_bb} cannot hold for large enough $k$.
This implies that the sequence $(n(k))_k$ cannot exist under the assumption that
$\liminf_{k} \gamma E_{\varepsilon^{n(k)}}(w^{n(k)})
- P_\Omega(\bar{w}^{-1}(\{1\})) > \delta$ for some $\delta > 0$.

If $\partial^* \bar{w}^{-1}(\{1\}) \cap \Omega \neq \emptyset$, then $\bar{w} \notin H^1(\Omega)$ so that
$E_{\varepsilon^{n(k)}}(\bar{w}) = \infty$ and we need an additional approximation
in order to obtain a feasible approximation. The properties of the assumed extension $A$
of $\bar{w}^{-1}(\{1\})$ imply that we can apply the reverse approximation argument from 
Proposition 2 in \cite{modica1987gradient}, which gives that there are functions 
$w_{\varepsilon^{n(k)}}$, $k \in \N$,
such that $E_{\varepsilon^{n(k)}}(\bar{w}_{\varepsilon^{n(k)}}) \to C_0P_\Omega(\bar{w}^{-1}(\{1\}))$.
As part of its proof (see p.\ 135 in \cite{modica1987gradient}), it is shown that the functions satisfy
\begin{gather}\label{eq:L1_estimate_for_smooth_sets}
\|\bar{w}_{\varepsilon^{n(k)}} - \bar{w}\|_{L^1(\Omega)}
\le (\varepsilon^{n(k)})^{\frac{1}{2}} C(\bar{w}),
\end{gather}   
where $C(\bar{w})$ is a constant that depends on $\bar{w}$. Because $r > 4$
we have that $\varepsilon^{n(k)} \in o((\underline{\Delta}^{n(k)})^2)$
by means of the update rules for $\varepsilon^n$ and $\underline{\Delta}^{n}$.
Consequently, by choosing $\eta = \delta\cdot\frac{1 - \rho}{3}$,
we obtain again a corresponding $j_0$ and for all $k$ large enough we obtain
that $\bar{w}_{\varepsilon^{n(k)}}$ is feasible 
for the trust-region subproblem \ref{eq:tr_gle}$(w^{n(k)}, \nabla_R j(w^{n(k)}), \Delta^0 2^{-j_0})$
(for example by choosing $j_0 - 1$ for the $j_0$ that could be chosen for $\bar{w}$).
Now, we obtain
\begin{gather*}
\begin{multlined} E_{\varepsilon^{n(k)}}(w^{n(k)}) 
- E_{\varepsilon^{n(k)}}(\bar{w}_{\varepsilon^{n(k)}})\\
\ge E_{\varepsilon^{n(k)}}(w^{n(k)}) - C_0P_\Omega (w^{-1}(\{1\}))
- |E_{\varepsilon^{n(k)}}(\bar{w}_{\varepsilon^{n(k)}}) - C_0P_\Omega (w^{-1}(\{1\}))|
\ge \frac{\delta}{2}
\end{multlined}
\end{gather*}
for all $k$ large enough. Then, using $\frac{\delta}{2}$ instead of $\delta$ for the adjustment
of $\eta$, the same reasoning as for $\partial^* \bar{w}^{-1}(\{1\}) \cap \Omega = \emptyset$
applies in order to refute the existence of the claimed sequence $(n(k))_k$.
\end{proof}

\begin{remark}
If the boundary regularity assumption of the extension in \cref{lem:asymptotics_of_regularizer} 
is dropped, a further approximation of $\bar{w}^{-1}(\{1\})$
by sets with smooth boundary is necessary
in order to apply Proposition 2 from \cite{modica1987gradient}.
Unfortunately, the constant in the estimate \eqref{eq:L1_estimate_for_smooth_sets} then depends 
on these approximants and might increase for every approximant, thereby
preventing us from controlling the convergence rate of the left hand side
of \eqref{eq:L1_estimate_for_smooth_sets}.
Consequently, we have not been able to prove the claim in this case.
\end{remark}

\begin{remark}
In \cref{alg:trm} \ref{ln:reduce_epsilon}, $\underline{\Delta}^n$ is divided by two while
$\varepsilon^n$ is divided by $r > 4$ when $\Delta^{n+1}$ falls below $\underline{\Delta}^n$.
This sufficient to guarantee $\varepsilon^{n(k)} \in o((\underline{\Delta}^{n(k)})^2)$
in the proof of \cref{lem:asymptotics_of_regularizer}. While we have used these update rules
in the interest of a clean presentation, other update rules that
guarantee $\varepsilon^{n(k)} \in o((\underline{\Delta}^{n(k)})^2)$ would be feasible as well.
\end{remark}

The following auxiliary lemma is a variant of Lemma 6.2 in \cite{manns2022on}, where 
we assume the relaxed condition \cref{ass:general_var} instead of twice continuous 
differentiability.

\begin{lemma}\label{lem:step_accept}
	Let \cref{ass:general_var} hold.
	Let $\rho \in (0,1)$. Let $w \in L^1(\Omega)$ satisfy
	$w(x) \in \{0,1\}$ for a.e.\ $x \in \Omega$ and $P_\Omega(w^{-1}(\{1\})) <\infty$.
	Let $\nabla_R j(w) \in C(\bar{\Omega})$. Let $\Delta^k \searrow 0$.
	Let $\tilde{w}^k$ minimize $\text{\emph{\ref{eq:tr_sharp}}}(w,\nabla_R j(w),\Delta^{k})$ for $k \in \N$.
	Then at least one of the following statements holds true.
	\begin{enumerate}
		\item\label{itm:step_accept_w_stationary} The function $w$ satisfies the L-stationarity
		conditions \eqref{eq:p_variational_principle}.
		\item\label{itm:step_accept_w_optimal} There exists $k_0 \in \N$ such that
		the objective value of $\text{\emph{\ref{eq:tr_sharp}}}(w,\Rgrad j(w), \Delta^{k_0})$
		at $w^{k_0}$ is zero.
		\item\label{itm:step_accept_w_sufficient_decrease} There exists $k_0 \in \N$ such that 
		\begin{multline}\label{eq:suffdec_sharp}
		\delta^{k_0} \coloneqq j(w) + \gamma C_0P(w^{-1}(\{1\})) - j(\tilde{w}^{k_0}) - \gamma C_0P((\tilde{w}^{k_0})^{-1}(\{1\})) \\
		\ge \rho (\Rgrad j(w),w - \tilde{w}^{k_0}) + \gamma C_0P(w^{-1}(\{1\})) - \gamma C_0P((\tilde{w}^{k_0})^{-1}(\{1\})) \eqqcolon p^{k_0}.
		\end{multline}
	\end{enumerate}
\end{lemma}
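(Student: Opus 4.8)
The plan is to argue by cases on the minimal objective value of \eqref{eq:tr_sharp}. By \cref{prp:tr_existence} the minimal value $m^k$ of $\text{\ref{eq:tr_sharp}}(w,\Rgrad j(w),\Delta^k)$ satisfies $m^k\le 0$, so the predicted reduction $\pred^k\coloneqq -m^k=(\Rgrad j(w),w-\tilde w^k)_{L^2}+\gamma C_0 P_\Omega(w^{-1}(\{1\}))-\gamma C_0 P_\Omega((\tilde w^k)^{-1}(\{1\}))$ is nonnegative. If $m^{k_0}=0$ for some $k_0$, the second alternative holds and we are done. Hence I would assume $m^k<0$, i.e. $\pred^k>0$ and $\tilde w^k\ne w$, for all $k$ (otherwise $m^k=0$), and additionally that $w$ is not stationary, so that the first alternative fails. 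Under these assumptions the goal is to produce the sufficient decrease \eqref{eq:suffdec_sharp}, i.e. the third alternative, for some large $k_0$.

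The first step is a Taylor reduction that eliminates the exactly-treated perimeter terms from the comparison. The perimeter contributions enter $\delta^k$ (the actual reduction) and $\pred^k$ identically, so $\delta^k-\rho\,\pred^k$ collapses to a statement about the smooth part $j$. Concretely, \cref{ass:general_var} gives Fréchet differentiability of $j$ on $L^1(\Omega)$; since $\tilde w^k-w$ is $\{0,1\}$-valued (hence bounded and in $L^2(\Omega)$) and $\Rgrad j(w)\in C(\bar\Omega)\subset L^2(\Omega)$ is the Riesz representative of $j'(w)$, I obtain $j(w)-j(\tilde w^k)=(\Rgrad j(w),w-\tilde w^k)_{L^2}+o(\|w-\tilde w^k\|_{L^1})$. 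Consequently $\delta^k-\pred^k=o(\|w-\tilde w^k\|_{L^1})$, so $\delta^k-\rho\,\pred^k=(1-\rho)\pred^k+o(\|w-\tilde w^k\|_{L^1})$, and it remains to show that $\pred^k$ dominates this remainder for some large $k$.

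The second, and main, step is a lower bound $\pred^k\gtrsim\Delta^k$ obtained from a descent competitor. Since $w$ is not stationary, \eqref{eq:p_variational_principle} fails for some $\phi\in C_c^\infty(\Omega,\R^d)$; both first-variation integrals are linear in $\phi$, so after possibly replacing $\phi$ by $-\phi$ the directional derivative $V(\phi)\coloneqq \tfrac{\dd}{\dd t}\big[(\Rgrad j(w),f_t^{\#}w-w)_{L^2}+\gamma C_0 P_\Omega((f_t^{\#}w)^{-1}(\{1\}))\big]\big|_{t=0}$ is strictly negative, where $f_t=I+t\phi$ and $f_t^{\#}w=\chi_{f_t(w^{-1}(\{1\}))}$; differentiability at $t=0$ and the value of this derivative are exactly as in \cref{prp:stationarity} (through \eqref{eq:L1_bound}). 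Choosing $t_k$ so that $\|f_{t_k}^{\#}w-w\|_{L^1}=\Delta^k$ makes $f_{t_k}^{\#}w$ feasible for $\text{\ref{eq:tr_sharp}}(w,\Rgrad j(w),\Delta^k)$ with $t_k=\Theta(\Delta^k)$, and its model value equals $V(\phi)t_k+o(t_k)\le \tfrac12 V(\phi)t_k<0$ for large $k$. By optimality of $\tilde w^k$ this yields $\pred^k\ge \tfrac12|V(\phi)|t_k\ge c\,\Delta^k\ge c\,\|w-\tilde w^k\|_{L^1}$ for a constant $c>0$ and all large $k$.

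Combining the two steps closes the argument: $\delta^k-\rho\,\pred^k\ge (1-\rho)c\,\|w-\tilde w^k\|_{L^1}+o(\|w-\tilde w^k\|_{L^1})\ge 0$ for all sufficiently large $k$, which gives \eqref{eq:suffdec_sharp} and hence the third alternative. I expect the principal obstacle to be the descent-competitor estimate of the third paragraph: converting the qualitative failure of \eqref{eq:p_variational_principle} into a predicted reduction proportional to the trust-region radius requires both the upper bound $\|f_t^{\#}w-w\|_{L^1}\le \bar C|t|P_\Omega(\cdot)$ from \eqref{eq:L1_bound} and a comparable lower bound so that $t_k\sim\Delta^k$, and it relies on the first-variation formula being valid, which is precisely where $\Rgrad j(w)\in C(\bar\Omega)$ and the remainder control of \cref{prp:stationarity} enter. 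The remaining bookkeeping—the $L^1$-versus-$L^2$ remainder and the cancellation of the perimeter terms—is then routine.
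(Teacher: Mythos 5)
Your proposal is correct and follows essentially the same route as the paper's proof: the same competitor $f_{t}^{\#}w$ with $t$ calibrated so that the $L^1$-distance equals $\Delta^k$, the same use of the strict negativity of the first variation under non-stationarity to lower-bound the predicted reduction, and the same conversion of the $o(\|\tilde{w}^k-w\|_{L^1})$ Taylor remainder into an $o(\tau^k)$ term via \eqref{eq:L1_bound}. The only cosmetic difference is that you isolate the bound $p^k\gtrsim \Delta^k$ as a separate step rather than folding it into one chain of inequalities, and your reading of \eqref{eq:suffdec_sharp} as ``actual reduction $\ge \rho\,\cdot$ full predicted reduction'' matches what the paper's proof actually establishes.
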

\begin{proof}
	We closely follow the lines of the proof of Lemma 6.2
	in \cite{manns2022on}.
	Let $k \in \N$. The objective of $\text{{\ref{eq:tr_sharp}}}(w,\Rgrad j(w), \Delta^{k})$, the negative of the 
	right hand side in \eqref{eq:suffdec_sharp}, is zero when inserting $w$ itself. Consequently, if the optimal solution $\tilde{w}^k$ has objective
	value zero (Outcome \ref{itm:step_accept_w_optimal}), then $w$ minimizes $\text{{\ref{eq:tr_sharp}}}(w, \Rgrad j(w), \Delta^{k})$.
	\Cref{prp:tr_sharp_stationarity} implies that $w$ is L-stationary (Outcome \ref{itm:step_accept_w_stationary}).
	Thus it remains to show that Outcome \ref{itm:step_accept_w_sufficient_decrease} when
	Outcome \ref{itm:step_accept_w_stationary} does not, that is we show \eqref{eq:suffdec_sharp} for some $k_0$.

	Because $w$ is not L-stationary, there exists $\phi \in C_c^\infty(\Omega;\R^d)$ with
	$\supp \phi \neq \emptyset$ and $\eta > 0$ such that
	\[
	\int_{\partial^*E \cap \Omega}
	-\Rgrad j(w)(\phi\cdot n_{E}) \dd \Ha^{d-1} - 
	\gamma C_0 \int_{\partial^* E \cap \Omega}\bdvg{E} \phi\dd \Ha^{d-1}	
	> \eta. 
	\]
	Let $(f_t)_{t\in (-\tau_0,\tau_0)}$ be defined by $f_t \coloneqq I + t \phi$ for $\phi \in C_c^\infty(\Omega, \R^d)$ and $t \in (-\tau_0,\tau_0)$
	for some $\tau_0 > 0$, see also Definition 3.1 and Proposition 3.2 in
	\cite{manns2022on}.
	We define $f_{t}^{\#}w \coloneqq \chi_{f_t(w^{-1}(\{1\}))}$.
	Then as in the proof of Lemma 6.2 in \cite{manns2022on}, we obtain for all $k \in \N$ sufficiently large that
	there is $\tau^k > 0$ such that $f_t^{\#}w$ is feasible for	$\text{{\ref{eq:tr_sharp}}}(w,\Rgrad j(w),\Delta^k)$
	for all $t \in [0,\tau^k]$ and
	$\|f^{\#}_{\tau^k}w - w\|_{L^1} = \Delta^k$.
		
	\Cref{ass:general_var} yields
	\[ \delta^k
	= p^k + o(\|\tilde{w}^k - w\|_{L^1}).
	\]
	Consequently, we obtain for all $k$ large enough and
	all $t \in (0,\tau^k]$:
	\begin{align*}
	\delta^k
	&\ge \rho p^k -(1 - \rho)\left(
	(\Rgrad j(w), f_{\tau^k}^{\#}w - w)_{L^2} + \alpha \TV(f_{\tau^k}^{\#}w) - \alpha \TV(w)\right)
	+ o(\|\tilde{w}^k - w\|_{L^1})\\
	&\ge \rho p^k + (1 - \rho)\left({\tau^k} \eta + o(\tau^k)\right) 
	+ o(\|\tilde{w}^k - w\|_{L^1})\\
	&\ge \rho p^k + (1 - \rho)\left(\tau^k \eta + o(\tau^k)\right) + o(\tau^k).
	\end{align*}
	Here, the first inequality holds because $\tilde{w}^k$
	minimizes $\text{{\ref{eq:tr_sharp}}}(w,\Rgrad j(w), \Delta^{k})$,
	the second inequality holds by virtue of Lemmas 3.3 and 3.5
	in \cite{manns2022on}, and the third holds because
	\[  \|\tilde{w}^k - w\|_{L^1} \le \Delta^k
	= \|f_{\tau^k}^{\#}w - w\|_{L^1}
	\le \kappa \tau^k
	\]
	where the existence of $\kappa > 0$, which is independent of $k$,
	is asserted by Lemma 3.8 in \cite{manns2022on}. The term
	$(1 - \rho) \tau^k \eta$, which is positive,
	is larger than the $o(\tau^k)$ terms for all $k$
	large enough so that \eqref{eq:suffdec_sharp} holds for some
	$k_0 \in \N$.
\end{proof}

We are ready to prove the asymptotics of our algorithm.

\begin{theorem}\label{thm:gamma_to_stat}
Let \cref{ass:general_var,ass:algorithm} hold. Let
$\bar{w}$ be a limit point in the sense of 
\cref{prp:compactness} of the subsequence $(w^{n(k)})_k$
with $n(k)$ defined as in \cref{ass:algorithm}.
Let $\Rgrad j(\bar{w}) \in C(\bar{\Omega})$ and
$\bar{w}^{-1}(\{1\})$ admit an extension $A$ outside
of $\Omega$, that is to $\R^d$, such that $\partial^* A$ is a compact, smooth hypersurface
and satisfies $\Ha^{d-1}(\partial^* A \cap \partial^*\Omega) = 0$.
Then $\bar{w}$ is L-stationary.
\end{theorem}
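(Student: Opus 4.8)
The plan is to derive a contradiction from the assumption that $\bar w$ is not stationary by transferring, through \cref{thm:tr_gamma_convergence}, the strict sufficient decrease that non-stationarity forces on the sharp-interface subproblem \eqref{eq:tr_sharp} at $\bar w$ to the phase-field subproblems \eqref{eq:tr_gle} that the algorithm actually rejects during the final sweep for each $\varepsilon^{n(k)}$. First I would use \cref{lem:asymptotics_of_regularizer} (whose hypotheses are exactly those of the theorem, so that the smooth-extension assumption on $\bar w^{-1}(\{1\})$ is consumed here) together with \cref{prp:compactness} to pass to a subsequence, still written $(w^{n(k)})_k$, along which $w^{n(k)}\to\bar w$ in $L^1(\Omega)$ and in $L^2(\Omega)$ and $E_{\varepsilon^{n(k)}}(w^{n(k)})\to C_0P_\Omega(\bar w^{-1}(\{1\}))$. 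Continuous Fréchet differentiability (\cref{ass:general_var}) then gives $\Rgrad j(w^{n(k)})\to\Rgrad j(\bar w)$ in $L^2(\Omega)$, while $\Rgrad j(\bar w)\in C(\bar\Omega)$ holds by \cref{ass:algorithm}.

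Assuming $\bar w$ is not stationary, I would apply \cref{lem:step_accept} to $\bar w$ along the dyadic radii $\Delta^0 2^{-j}\searrow 0$ used by the algorithm; the outcome that $\bar w$ already minimizes \eqref{eq:tr_sharp} is excluded because \cref{prp:tr_sharp_stationarity} would then force stationarity, so the sufficient-decrease outcome must hold. The argument behind \cref{lem:step_accept} in fact bounds the predicted reduction of $\text{\ref{eq:tr_sharp}}(\bar w,\Rgrad j(\bar w),\Delta^0 2^{-j})$ from below by a fixed multiple of the radius (the explicit perturbation $f_t^{\#}\bar w$ supplied by non-stationarity decreases the objective linearly in $t$), whereas the first-order Taylor remainder of $j$ along the sharp minimizer is of higher order in the radius. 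Consequently, fixing $j^*$ large enough and writing $\Delta^*:=\Delta^0 2^{-j^*}$ and $p^*:=-\min\text{\ref{eq:tr_sharp}}(\bar w,\Rgrad j(\bar w),\Delta^*)$, the sharp actual reduction $\delta^*:=j(\bar w)+\gamma C_0P_\Omega(\bar w^{-1}(\{1\}))-j(\tilde w)-\gamma C_0P_\Omega(\tilde w^{-1}(\{1\}))$ of a minimizer $\tilde w$ satisfies $\delta^*>0$ and $\delta^*>\rho\,p^*$.

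I would then apply \cref{thm:tr_gamma_convergence} at the fixed radius $\Delta^*$ with $v^n=w^{n(k)}$, $g^n=\Rgrad j(w^{n(k)})$ and $\varepsilon^n=\varepsilon^{n(k)}$; all of its hypotheses hold by the first paragraph, the energy-convergence hypothesis being precisely what \cref{lem:asymptotics_of_regularizer} delivers. Equicoercivity of the family $T^{n(k)}$ in $L^1(\Omega)$---furnished by \cref{prp:compactness}, since a bound on $T^{n(k)}$ bounds the Ginzburg--Landau energy of a $[0,1]$-valued sequence---upgrades $\Gamma$-convergence to convergence of minimal values together with subsequential convergence of minimizers. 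Hence the phase-field predicted reduction $\pred(w^{n(k)},\Delta^*,\varepsilon^{n(k)})=-\min T^{n(k)}\to p^*$; the rejected candidates $\tilde w^{n(k)}$ at radius $\Delta^*$ converge in $L^1(\Omega)$ to $\tilde w$; isolating the energy term in $\min T^{n(k)}\to\min T$ (the linear term converges by strong $L^2$-convergence of $g^{n(k)}$ and of the iterates) gives $E_{\varepsilon^{n(k)}}(\tilde w^{n(k)})\to C_0P_\Omega(\tilde w^{-1}(\{1\}))$; and continuity of $j$ yields $\ared(w^{n(k)},\tilde w^{n(k)},\varepsilon^{n(k)})\to\delta^*$.

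For all large $k$ one has $j^*\le k$, so $\Delta^*$ is among the radii at which a step is computed and rejected during the $\varepsilon^{n(k)}$-sweep, and each such rejection yields \eqref{eq:contr_bb} or \eqref{eq:contra_sd}, as recalled in the proof of \cref{lem:asymptotics_of_regularizer}. Since $\ared\to\delta^*>0$ while $\kappa^{n(k)}\underline\Delta^{n(k)}\to 0$, \eqref{eq:contr_bb} cannot hold for large $k$; since $\ared/\pred\to\delta^*/p^*>\rho$, \eqref{eq:contra_sd} cannot hold either. Thus the step would be accepted, contradicting the rejection, and therefore $\bar w$ is stationary. The main obstacle is the balance underlying the second paragraph: because the trust region is measured in squared $L^2$ while the sharp decrease is measured in $L^1$, a vanishing-radius Taylor estimate cannot be run uniformly over the phase-field minimizers. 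The remedy is to fix the small radius $\Delta^*$ first, so that the strict inequality $\delta^*>\rho\,p^*$ is a statement purely about the sharp subproblem driven by non-stationarity, and only afterwards to let $k\to\infty$ and invoke $\Gamma$-convergence, which transports both the actual and the predicted reduction to their sharp counterparts.
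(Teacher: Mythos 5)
Your proposal is correct and follows essentially the same route as the paper's proof: extract the subsequence with converging Ginzburg--Landau energies via \cref{lem:asymptotics_of_regularizer}, obtain a strict sufficient decrease for the sharp subproblem at a fixed radius $\Delta^0 2^{-j}$ from non-stationarity via \cref{lem:step_accept}, transport both predicted and actual reductions to the phase-field subproblems via \cref{thm:tr_gamma_convergence} together with the compactness of \cref{prp:compactness}, and contradict the rejection of all steps at that radius. Your only (welcome) refinement is that you explicitly rule out the second acceptance condition $\ared > \kappa^n\underline{\Delta}^n$ via $\kappa^{n(k)}\underline{\Delta}^{n(k)}\to 0$, which the paper handles inside \cref{lem:asymptotics_of_regularizer} rather than in the theorem's proof.
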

\begin{proof}
We consider a subsequence of the iterates $(n(k))_k$, which we denote by the same symbol for ease of notation,
as is asserted by \cref{ass:algorithm} such that
\begin{gather}\label{eq:strict_in_alg}
E_{\varepsilon^{n(k)}}(w^{n(k)}) \to C_0 P_\Omega(\bar{w}^{-1}(\{1\})).
\end{gather}
By virtue of the update rules for $\underline{\Delta}^n$ in \cref{alg:trm} and the well-definedness
of the sequence $(n(k))_k$ it follows that for all $j \in \N$ there is $k_j$ so that for all $k \ge k_j$
it holds that $\Delta^0 2^{-j} > \underline{\Delta}^{n(k)}$ and the solution  of
\ref{eq:tr_gle}$(w^{n(k)}, \nabla_R j(w^{n(k)}), \Delta^0 2^{-j})$ is not accepted
in \cref{alg:trm} ln.\ \ref{ln:suff_dec}.

For any such $j$, we employ \cref{thm:tr_gamma_convergence} to deduce that the minimizers $w^{n(k),*}$
returned by the subproblem solves of \ref{eq:tr_gle}$(w^{n(k)}, \Rgrad j(w^{n(k)}), \Delta^0 2^{-j})$ 
in \cref{alg:trm} ln.\ \ref{ln:tr_step_gle} converge
(after possibly restricting to subsequences and after invoking the compactness
result from \cite{fonseca1989gradient}, see \cref{prp:compactness}) to minimizers $w^*$
of \ref{eq:tr_sharp}$(\bar{w}, \Rgrad j(\bar{w}), \Delta^0 2^{-j})$ in $L^1(\Omega)$
for $k \to \infty$:
\[ T(w^*) \le
\liminf_{k\to\infty} T^{n(k)}(w^{n(k),*}) \le
\limsup_{k\to\infty} T^{n(k)}(w^{n(k),r})
\to T(w^*),\]
where the $w^{n(k),r}$ denote the elements of the reverse approximating sequence that exist by 
virtue of the upper bound inequality of the $\Gamma$-convergence result in \cref{thm:tr_gamma_convergence}.

Consequently, the predicted reductions for the returned minimizers of
\ref{eq:tr_gle}$(w^{n(k)}, \nabla_R j(w^{n(k)}), \Delta^0 2^{-j})$
converge to the predicted reduction of the limit problem 
\ref{eq:tr_sharp}$(\bar{w}, \nabla_R j(\bar{w}), \Delta^0 2^{-j})$,
which in combination with the continuity properties of $\nabla j$
yields
\[ \gamma E_{\varepsilon^{n(k)}}(w^{n(k),*}) \to C_0P_\Omega((w^*)^{-1}(\{1\})). \]
In turn, we imply convergence of the induced actual reductions as well.

This implies that if there is $s \in (\rho,1)$ such that 
\begin{gather}\label{eq:suff_dec_unreg}
\frac{j(\bar{w}) + \gamma C_0P(\bar{w}^{-1}(\{1\})) - j(w^*) - \gamma C_0P((w^*)^{-1}(\{1\}))}{(\Rgrad j(\bar{w}),\bar{w} - w^*) + \gamma C_0P(\bar{w}^{-1}(\{1\})) - \gamma C_0P((w^*)^{-1}(\{1\}))}
\ge s,
\end{gather}
then there is $\bar{k}_j \ge k_j$ such that for all $k \ge \bar{k}_j$ it follows that
\[ \frac{\ared(w^{n(k)}, w^{n(k),*}, \varepsilon^{n(k)})}{\pred(w^{n(k)}, \Delta^0 2^{-j}, \varepsilon^{n(k)})} \ge \rho, \]
which would contradict the definition of $n(k)$ and in turn $w^{n(k)}$.

If $\bar{w}$ is not L-stationary, then \cref{lem:step_accept} \eqref{itm:step_accept_w_sufficient_decrease}
implies that \eqref{eq:suff_dec_unreg} holds for some large enough $j$, which leads to said
contradiction and thus we conclude that $\bar{w}$ is
L-stationary.
\end{proof}
\begin{remark}
We do not see how \cref{alg:trm} could possibly ensure that its limit points $\bar{w}$
satisfy $\Rgrad j(\bar{w}) \in C(\bar{\Omega})$ or the regularity of the extension of $\bar{w}^{-1}$ by choice of its parameters,
which we thus keep as true regularity assumptions. 
While $\Rgrad j(\bar{w}) \in C(\bar{\Omega})$ can be verifiable a priori by means of the regularity properties of the
underlying PDE, we note that we do not know at the moment if and how the assumption in \cref{thm:gamma_to_stat} that the extension $A$ of $\bar{w}^{-1}$ has
a reduced boundary $\partial^* A$ that is a compact, smooth hypersurface and satisfies $\Ha^{d-1}(\partial^* A \cap \partial^*\Omega) = 0$
can be checked in practice. Therefore, it has a similar role as so-called constraint qualifications in nonlinear programming,
where some regularity on the constraints (in our case on the integrality restriction) needs to be satisfied in order to obtain stationarity
or convergence to stationary points.

We believe that the assumption may turn out to be realistic in many settings, as it is common in optimization and in particular optimal control that the optimality
conditions allow to improve regularity of stationary points.
\end{remark}

\section{Applications of the algorithmic framework}\label{sec:applications}

We discuss two classes of problems. In \cref{sec:application_elliptic}, we consider
the case of elliptic equations and briefly argue that non-standard regularity theory
allows to certify \cref{ass:general_var,ass:algorithm}.
In \cref{sec:application_elliptic_control}
we provide a numerical setup and computational results for the elliptic equation.
In \cref{sec:application_wave}, we consider
a  more challenging class of coefficient-control problems for acoustic wave equations. Here, we point out
the gaps between our regularity assumptions.
In \cref{sec:numerical_experiments}, we give a numerical setup for the acoustic
wave equations and provide some computational results in order to illustrate
the algorithm albeit outside of the scope of its assumptions.

\subsection{Source control of elliptic equations}\label{sec:application_elliptic}

We first discuss the developed optimization framework for the control of a
source term in an elliptic equation. Let $\Omega = (0,2)^2$ and $\nu > 0$. Consider
\begin{gather}\label{eq:elliptic} 
\left \{
\begin{aligned}
-\nu \Delta u + u =&\, B w + f\quad &&\text{in } \Omega, \\
u =&\, 0\quad &&\text{on }\partial \Omega
\end{aligned} \right.
\end{gather}
for some bounded linear operator $B : L^1(\Omega) \to L^1(\Omega)$, $f \in L^1(\Omega)$,
and $w \in L^1(\Omega)$ is the given control. We further assume that the adjoint
$B^* : L^\infty(\Omega) \to L^\infty(\Omega)$
is also bounded as a mapping $B^* : C(\bar{\Omega}) \to C(\bar{\Omega})$.

By means of
\cite[Theorems 1\,\&\,3]{groger1989aw} and \cite[Theorem 4.6]{simader1972dirichlet},
there exists $q>2$ such that \[\|u\|_{W^{1,q}(\Omega)} \le c_1 \|w\|_{W^{-1,q}(\Omega)} + c_2\|f\|_{W^{-1,q}(\Omega)}\]
for some constants $c_1$, $c_2 > 0$. Analyzing the adjoint of the solution operator to \eqref{eq:elliptic}, we obtain
\begin{gather}\label{eq:elliptic_w1qstar_estimate}
\|u\|_{W^{1,q^*}(\Omega)} \le c_3 \|w\|_{W^{-1,q^*}(\Omega)} + c_4\|f\|_{W^{-1,q^*}(\Omega)}
\end{gather}
with $q^*$ being the H\"older conjugate of $q$ and $c_3$, $c_4 > 0$.
Because of the dense embeddings
$W^{1,q}(\Omega) \hookrightarrow C(\bar{\Omega})$ for $q > 2$, we obtain
$L^1(\Omega) \hookrightarrow \calM(\Omega) \hookrightarrow W^{-1,q^*}(\Omega)$, where
$\calM(\Omega)$ denotes the space of Radon measures by means of the Riesz representation theorem.
Combining \eqref{eq:elliptic_w1qstar_estimate} and the embeddings stated above with $W^{1,q^*}(\Omega) \hookrightarrow L^2(\Omega)$,
there is a continuous solution operator $S : L^1(\Omega) \to L^2(\Omega)$ to \eqref{eq:elliptic}.

Let $j$ in \eqref{eq:p_abstract} be $j = J \circ S$ and $J : L^2(\Omega) \to \R$ be continuously differentiable.
The corresponding boundary-value problem for the adjoint equation reads
\begin{gather}\label{eq:elliptic_adjoint} 
\left \{
\begin{aligned}
-\nu \Delta p + p &= -J'(S(w))\quad &&\text{in }\Omega,\\
p &= 0\quad &&\text{on }\partial \Omega
\end{aligned} \right.
\end{gather}
and inherits the regularity properties of \eqref{eq:elliptic} and we obtain
$\Rgrad j(w) = B^* p \in C(\bar{\Omega})$. Let $B_{L^\infty(\Omega)}$ be the unit ball
of $L^\infty(\Omega)$. Then $B_{L^\infty(\Omega)}$ with respect to all $L^s$-norms, $s \ge 1$,
and in turn $S(B_{L^\infty(\Omega)})$ is bounded in $W^{1,q}(\Omega)$ for the $q > 2$
from above. Consequently, $S(B_{L^\infty(\Omega)})$ is also bounded in $H^1(\Omega)$ so that
$S(B_{L^\infty(\Omega)})$ is a compact subset of $L^2(\Omega)$. Thus because $J : L^2(\Omega) \to \R$ is
continuous, $J \circ S$ is bounded on $B_{L^\infty(\Omega)}$. Because $J: L^2(\Omega)\to \R$ is even
continuously differentiable, we obtain that the right hand side of \eqref{eq:elliptic_adjoint}
is bounded in $L^2(\Omega)$ and in turn $p$ is bounded in $W^{1,q}(\Omega)$ and $C(\bar{\Omega})$
too. Together with the assumptions on $B^*$, we obtain that $\Rgrad j$ is bounded
in $L^\infty(\Omega)$ and $C(\bar{\Omega})$.

Combining these considerations, we obtain the following proposition, which proves
that \cref{ass:general_var} is satisfied
as well as the prerequisites
of \cref{prp:boundedness_simple} and \cref{prp:boundedness_involved}, implying that \cref{ass:algorithm}
is satisfied for the execution of \cref{alg:trm}.
\begin{proposition}
Let $S : L^1(\Omega) \to L^2(\Omega)$ be the
solution operator of \eqref{eq:elliptic} and $J : L^2(\Omega) \to \R$ be continuously differentiable.
Let $j$ in \eqref{eq:p_abstract} be $j = J \circ S$. 
Then $j = J \circ S$ is continuously differentiable as a mapping 
$j: L^1(\Omega) \to \R$ and $j$ and $\Rgrad j$ are bounded on the 
unit ball of $L^\infty(\Omega)$.
Moreover, $\Rgrad j(w) \in C(\bar{\Omega})$
for all $w \in B_{L^\infty(\Omega)}$.
\end{proposition}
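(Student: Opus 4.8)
The plan is to exploit that the control enters \eqref{eq:elliptic} affinely, so that $S$ is a bounded affine map into a space that embeds into both $L^2(\Omega)$ and $C(\bar{\Omega})$, and then to combine the chain rule with an adjoint computation. Throughout I write $w$ for the control (the argument of $j$) and $S_0$ for the solution operator $g \mapsto u$ of $-\nu\Delta u = g$ with homogeneous Dirichlet data, so that $S(w) = S_0(Bw + f)$ and $S'(w) = S_0 B$ is independent of $w$.

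First I would record the mapping properties of $S$. The Gr\"oger--Simader estimate is, at its core, the bound $\|S_0 g\|_{W^{1,p}(\Omega)} \lesssim \|g\|_{W^{-1,p}(\Omega)}$ for some $p > 2 = d$ (recall $\Omega = (0,1)^2$). Together with the embedding chain $L^1(\Omega) \hookrightarrow \calM(\Omega) \hookrightarrow W^{-1,p}(\Omega)$ established above and the boundedness of $B$ on $L^1(\Omega)$, this shows that $w \mapsto S(w)$ is bounded and affine from $L^1(\Omega)$ into $W^{1,p}(\Omega)$. Since $p > d$ gives $W^{1,p}(\Omega) \hookrightarrow C(\bar{\Omega}) \hookrightarrow L^2(\Omega)$, the map $S : L^1(\Omega) \to L^2(\Omega)$ is bounded and affine, hence $C^1$ with constant derivative $S'(w) = S_0 B$.

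Next I would apply the chain rule. Because $J : L^2(\Omega) \to \R$ is $C^1$ and $S : L^1(\Omega) \to L^2(\Omega)$ is $C^1$, the composition $j = J \circ S$ is Fr\'echet differentiable on $L^1(\Omega)$ with $j'(w) = J'(S(w)) \circ S'(w)$. Continuity of $w \mapsto j'(w)$ follows from continuity of $S : L^1(\Omega) \to L^2(\Omega)$, continuity of $J' : L^2(\Omega) \to (L^2(\Omega))^*$, and the constancy of $S'$; this yields $j \in C^1(L^1(\Omega), \R)$ and in particular verifies \cref{ass:general_var}. Finally I would identify and regularize the gradient. For $h \in L^1(\Omega)$,
\[ j'(w)[h] = \bigl(J'(S(w)),\, S_0 B h\bigr)_{L^2(\Omega)} = \bigl(B^* S_0^{*} J'(S(w)),\, h\bigr)_{L^2(\Omega)} = (B^* p,\, h)_{L^2(\Omega)}, \]
where $p := S_0^{*} J'(S(w))$ is the adjoint state; since the Dirichlet Laplacian is self-adjoint, $S_0^{*} = S_0$ and $p$ is characterized (up to the sign convention) by \eqref{eq:elliptic_adjoint}. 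As $J'(S(w)) \in (L^2(\Omega))^* = L^2(\Omega) \subset L^1(\Omega) \hookrightarrow W^{-1,p}(\Omega)$, the same regularity estimate applied to \eqref{eq:elliptic_adjoint} gives $p \in W^{1,p}(\Omega) \hookrightarrow C(\bar{\Omega})$. The hypothesis that $B^{*} : C(\bar{\Omega}) \to C(\bar{\Omega})$ is bounded then yields $\nabla j(w) = B^{*} p \in C(\bar{\Omega})$, the asserted representative of $j'(w)$.

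The computations are otherwise routine; the only genuine point to watch is that the adjoint right-hand side $J'(S(w))$, which lives a priori only in $L^2(\Omega)$, must be placed in $W^{-1,p}(\Omega)$ so that the $W^{1,p}$-regularity estimate, and hence the embedding into $C(\bar{\Omega})$, applies. This is exactly where the preparatory embedding $L^1(\Omega) \hookrightarrow \calM(\Omega) \hookrightarrow W^{-1,p}(\Omega)$ and the self-adjointness of $S_0$ enter; everything else is the standard affine-composition chain rule.
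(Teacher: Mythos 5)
Your proposal is correct and follows essentially the same route as the paper, which proves this proposition implicitly through the preceding discussion: the Gr\"oger--Simader $W^{1,p}$-estimate with $p>2$, the embedding chain $L^1(\Omega) \hookrightarrow \calM(\Omega) \hookrightarrow W^{-1,p}(\Omega)$ and $W^{1,p}(\Omega) \hookrightarrow C(\bar{\Omega})$, the affine structure of $S$, and the adjoint representation $\nabla j = B^* p$ with $p$ solving \eqref{eq:elliptic_adjoint}. Your additional care in placing the adjoint right-hand side $J'(S(w)) \in L^2(\Omega)$ into $W^{-1,p}(\Omega)$ and in justifying the duality pairing via $p \in C(\bar{\Omega})$ is exactly the intended argument.
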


Consequently, we satisfy all assumptions a priori except the constraint qualification-type assumption that any
limit point $\bar{w}$ produced by the algorithm satisfies that $\bar{w}^{-1}(\{1\})$ admits an extension $A$
outside of $\Omega$, that is to $\R^d$, such that $\partial^* A$ is a compact, smooth hypersurface
and $\Ha^{d-1}(\partial^* A \cap \partial^*\Omega) = 0$.

\subsection{Numerical experiments for the source control problem}\label{sec:application_elliptic_control}
We provide an example in \cref{sec:source_exact_solution}, where a 
global minimizer of the problem \eqref{eq:p} is known exactly.
We briefly describe the discretization of the state and adjoint
PDE in \cref{sec:source_discretizations}. Our algorithmic setup
and the experiments are described in \cref{sec:source_setup}.
We describe the results in \cref{sec:source_results}.

\subsubsection{Exact solution}\label{sec:source_exact_solution}
We construct a globally optimal solution $(u^*, w^*)$ to the relaxed problem
\begin{gather}\label{eq:source_relaxed}
\begin{aligned}
\min_{u,w}\ & \frac{1}{2}\|u - u_d\|_{\Ltwo}^2 + \gamma C_0 P_{\Omega}(w^{-1}(\{1\})) \\
\st\ &-\nu \Delta u + u = w + f \text{ in } \Omega,\\
& u = 0 \text{ on } \partial\Omega, \\
&w(x) \in \R \text{ for a.e.\ } x \in \Omega
\end{aligned}
\end{gather}
such that $w^*$ is $\{0,1\}$-valued and $\partial^* {w^*}^{-1}(\{1\})$ is a compact, smooth hypersurface
that is completely contained in $\Omega$. This implies that $u^*$ is also a solution to \eqref{eq:p}
with the choices $J(u) \coloneqq \frac{1}{2}\|u - u_d\|_{\Ltwo}^2$ and $S(w) \coloneqq (-\nu \Delta + I)^{-1}(w + f)$
and satisfies the additional constraint qualification-type assumption of \cref{thm:gamma_to_stat}.
Regarding the constants, we choose $\nu = 10^{-2}$ and $\gamma = 10^{-2}C_0^{-1}$ and 
note that $C_0 = \int_0^1 \sqrt{2 s(1 - s)}\dd s = \tfrac{\pi}{4\sqrt{2}}$
for our choice $E_\varepsilon$, which can be deduced by following the analysis in \cite[p.\,132]{modica1987gradient}
and observing that choosing $W(x) = 2 x (1 - x)$ therein corresponds to our setting (after a transformation
of $\tfrac{\varepsilon}{2}$ to $\sqrt{\varepsilon}$). We follow the ideas of \cite{schiemann2024regularization}
and construct $u_d$, $f$, $u^*$, and $p^*$ such that $w^* = \chi_{B_{0.5}((1,1)^T)}$, $u^*$, and $p^*$ 
satisfy the following optimality system of \eqref{eq:source_relaxed},
see also Theorem 3 in \cite{casas2019analysis} and Theorem
2.6 in \cite{bredies2024extremal} for the derivation of optimality conditions,
\begin{subequations}
\begin{align}
-\nu \Delta u + u &= w + f \text{ in } \Omega,\ \ u = 0 \text{ on } \partial \Omega, \label{eq:u_match} \\
-\nu \Delta p + p &= u - u_d \text{ in } \Omega,\ \ p = 0 \text{ on } \partial \Omega, \label{eq:p_match} \\
\int_\Omega p \dd x &= 0  \label{eq:grad_equality} \\
(-p, w)_{L^2} &= \gamma C_0 \TV(w), \label{eq:tv_match} \\
(-p, v)_{L^2} &\le \gamma C_0 \TV(v) \text{ for all } v \in \BV(\Omega). \label{eq:tv_inequality}
\end{align}
\end{subequations}
Following \cite{schiemann2024regularization}, we first construct a function $\psi \in C^3_c((0,1))$
that satisfies $\psi(0.5) = 1$ and $0 \le \psi(s) \le 1$ for all $s \in (0,1)$. This then allows to
define
\[ \phi(x) = \left\{
\begin{aligned}
- \psi(\|x - (1,1)^T)\|)\tfrac{x - (1,1)^T}{\|x - (1,1)^T\|} &\ \text{ if } \|x - (1,1)^T\| \in \supp \psi, \\
0 &\ \text{ else.}
\end{aligned}
\right.
\]
In order to meet all conditions, we solve a linear system for the coefficients of a degree 8 polynomial
ansatz for $\psi$. Then we choose $p^* = \gamma C_0 \dvg \phi$, which allows to verify \eqref{eq:tv_match} and
\eqref{eq:tv_inequality} for the choice $w = w^*$. Next, we choose $u^*\in C_0^2(\Omega)$ as
$u^*(x) = -2 x_1^2(2 - x_1)^2 x_2^2(2 - x_2)^2$ and in turn
$f = -\nu \Delta u^* + u^* - w^*$ to satisfy \eqref{eq:u_match}
and $u_d = u^* - (-\nu \Delta p^* + p^*)$ to satisfy \eqref{eq:p_match}.
Finally, \eqref{eq:grad_equality} holds by means of Gau\ss--Green formula and the fact that $\phi$
has compact support.

\subsubsection{Discretization of the state and adjoint problems}\label{sec:source_discretizations}
We discretize $\Omega$ into $64 \times 64$ squares, which are subdivided into two triangles each.
We choose CG1 ansatz functions for $u$, $p$, and $w$. We use the \texttt{FEniCSx} library 
\cite{ScroggsEtal2022,BasixJoss,AlnaesEtal2015} to assemble and solve the PDE \eqref{eq:u_match}
and its adjoint \eqref{eq:p_match}. In order to provide (approximations of) $u_d$ and $f$, we implement
the exact solution from \cref{sec:source_exact_solution} using \texttt{sympy} \cite{sympy} and interpolate
it into CG1 functions.

\subsubsection{Setup}\label{sec:source_setup}
Regarding the implementation of \cref{alg:trm}, we only solve 
instances of the convex subproblems \eqref{eq:tr_cvx} in order to 
keep the implementation and computational effort
manageable, where we note that solving quadratic programs with
one negative eigenvalue is already
NP-hard in general \cite{pardalos1991quadratic}. We execute our algorithm on a laptop computer
with an Intel(R) Core(TM) i7-11850H CPU (2.50\,GHz) and 64\,GB RAM. We solve the discretized
subproblems with Gurobi 10.0.0 \cite{Gurobi}.
We initialize the algorithm with the choices $\Delta^0 = 1$,
$\rho = 10^{-4}$, $\kappa^0 = 10^{-8}$, $\underline{\Delta}^0 = \num{2.44e-4}$, and $r = 5$.
We initialize the homotopy with the different values
$\varepsilon^0 \in \{25, 5, 1, \num{2e-1}, \num{4e-2}, \num{8e-3}\}$
and for each of these six values for $\varepsilon^0$ with $21$ different initial controls $w^0$.
Specifically, we choose eleven constant initial controls
$w^0(x) = i \times 10^{-1}$ for $i \in \{0,\ldots,10\}$ and
$w^0(x) = \sin(0.25 i \pi x_1)^2\sin(0.25 i \pi  x_2)^2$ for $i \in \{1,\ldots,10\}$.
This gives a total of 126 runs of \cref{alg:trm}.

Our fixed CG1 ansatz limits the gradient of $w$ and, consequently, $E_\varepsilon$
cannot be expected to be reliable for very small values of $\varepsilon^n$.
In particular, $\|\nabla w^n\|_{L^2}^2$ cannot become larger and $\int_\Omega \Psi(w^n)$ cannot
become smaller if a phase transition from $0$ to $1$ already happens inside one grid cell.
We have observed such effects when $\varepsilon^n$ was reduced below $\num{8e-3}$ and
therefore always consider the last accepted step for $\varepsilon^n = \num{8e-3}$ as the
last iterate of the homotopy but note that in all cases, the algorithm did accept at most one further
iteration for the next smaller value $\varepsilon^n = \num{1.6e-3}$ with the $L^2$-norm
of the step always being below $10^{-10}$.

\subsubsection{Results}\label{sec:source_results}
All 126 runs produced sequences of iterates so that the
nodal values in the CG1 coefficient vector were almost binary.
The largest non-binarity of the coefficients over all choices of $w^0$
and for $\varepsilon^0 \in \{25, 5, 1, \num{2e-1}, \num{4e-2} \}$
was $\num{3.37e-10}$ while the largest final non-binarity for
$\varepsilon^0 =\num{8e-2}$ was $\num{1.08e-7}$.

For the choices $\varepsilon^0 \in \{25, 5, 1, \num{2e-1}, \num{4e-2} \}$,
all	sequences of produced iterates for the corresponding 105 runs have 
approximately the same final iterate $w_f$ and corresponding state and
adjoint $u_f$ and $p_f$, which are close to the CG1 interpolations of the
exact solution $\mathcal{I}w^*$ and corresponding
$\mathcal{I}u^*$ and $\mathcal{I}p^*$. Specifically,
we obtain for all cases
\[ \|w_f - \mathcal{I} w^*\|_{\Ltwo} = \num{3.12e-2},\quad
\|u_f - \mathcal{I} u^*\|_{\Ltwo} = \num{3.68e-3},\quad
\|p_f - \mathcal{I} p^*\|_{\Ltwo} = \num{2.21e-3}
\]
after rounding to three significant digits. In line with the fact that
$\mathcal{I} w^*$ is not optimal for the discretized problems, we observe 
that the objective value $J(S(w_f)) + \gamma E_{8 \cdot 10^{-3}}(w_f)$ is always slightly
smaller than $J(S(\mathcal{I} w^*)) + \gamma E_{8 \cdot 10^{-3}}(\mathcal{I} w^*)$.
We have validated for two runs that these errors decrease
when choosing a finer grid.
To give a visual impression of this global convergence behavior, we pick the choice
$w^0(x) = \sin(2.5 \pi x_1)^2\sin(2.5 \pi  x_2)^2$ and plot the $L^2$-distance
$\|w^n - \mathcal{I}w^*\|_{L^2}$ over the course of the iterations $n$ of \cref{alg:trm}
for the choices $\varepsilon^0 \in \{25, 5, 1, \num{2e-1}, \num{4e-2}\}$ in \Cref{fig:global_convergence}.
\begin{figure}
	\centering
	\includegraphics[width=.5\linewidth]{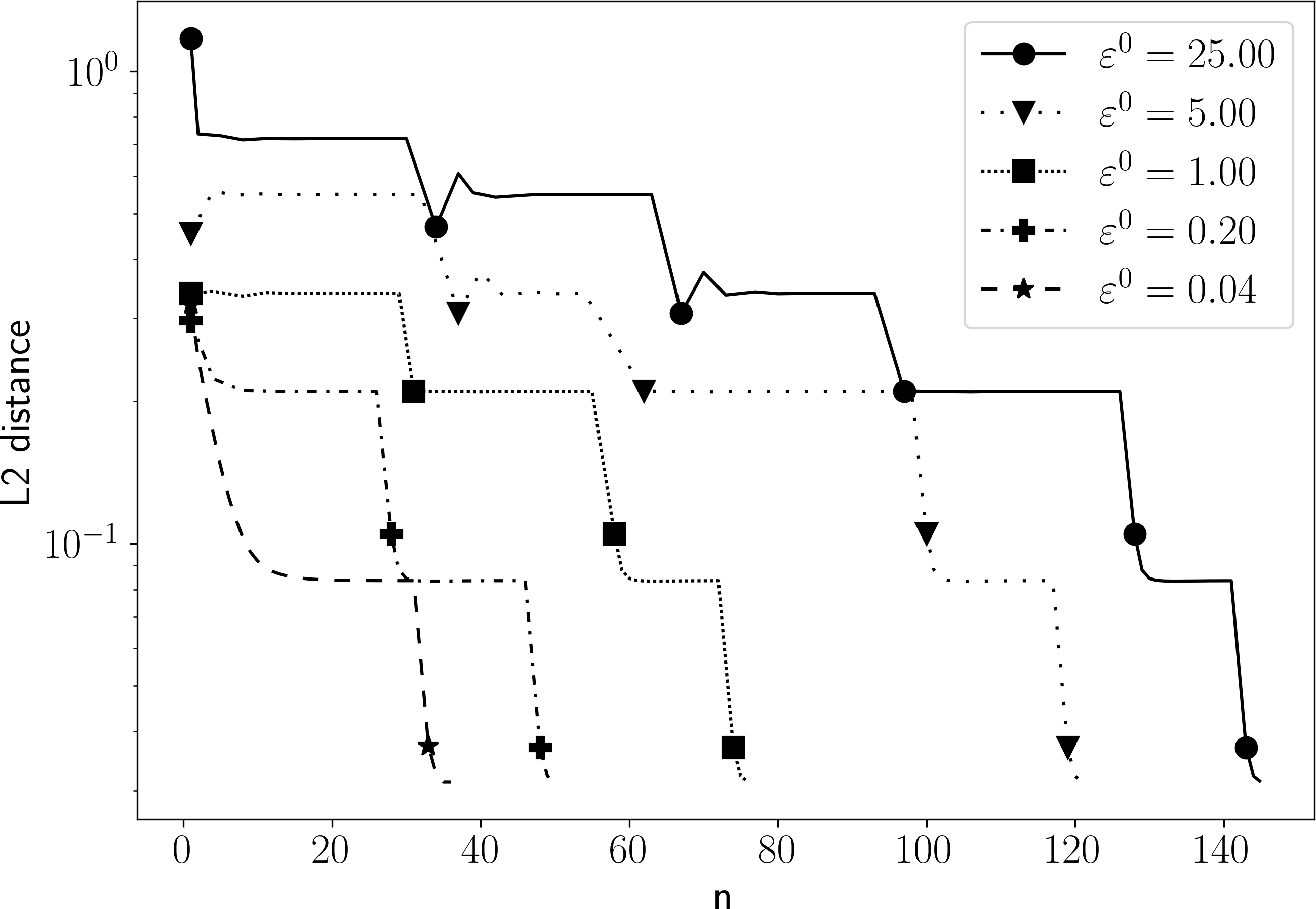}
	\caption{Global convergence of the iterates produced by \cref{alg:trm}
		indicated by the distance $\|w^n - \mathcal{I}w^*\|_{L^2(\Omega)}$ for the different
		values of $\varepsilon^0$. The locations of the marker symbols indicate the
		first iteration of an accepted iterate for a new (reduced) value of $\varepsilon^n$.}
	\label{fig:global_convergence}
\end{figure}

The 21 runs for the choice $\varepsilon^0 = \num{8.0e-3}$ produced
only very few iterations (between 1 and 8) and did not yield final
iterates close to $\mathcal{I}w^*$ but generally a binary-valued function
that was close to the initialization. While it is difficult to
come to a definite conclusion about the reasons here since discretization
and numerical precision effects may also be involved here, we believe that
this can mainly be attributed the fact that we only use the convex trust-region 
subproblems, which imply a high penalty proportionally to $\tfrac{1}{\varepsilon}$ on the change of a nodal value
that is already close to $1$ to $0$ or vice versa, see \eqref{eq:tr_cvx} 
although the difference may be very small or even zero in the non-convex 
formulation.
\begin{table}[h]
	\centering	
	\caption{Average and median number of iterations and average run time of the executions of \cref{alg:trm} over the different initializations
		$w^0$ for the different values of $\varepsilon^0$.}
	\label{tbl:avg_run_times}	
		\begin{tabular}{c|ccccc}
			\toprule
			$\varepsilon^0$ & $25$ & $5$ & $1$ & $\num{2e-1}$ & $\num{4e-2}$ \\
			\midrule
			Average iteration number
			& \num{146.76} & \num{109.81} & \num{76.81} & \num{52} & \num{37.24} \\
			Median iteration number
			& \num{147} & \num{109} & \num{76} & \num{52} & \num{33} \\			
			Average run time [s] & \num{593} & \num{458} & \num{325} & \num{213} & \num{147} \\			
			\bottomrule
	\end{tabular}
\end{table}
~\\
\indent For the choices $\varepsilon^0 \in \{25, 5, 1, \num{2e-1}, \num{4e-2}\}$,
where the final iterates were close to the globally optimal solution
of the limit problem and who are thus comparable, the number of iterations
and run times for the execution of \cref{alg:trm} decreased when starting
with a smaller value of $\varepsilon^0$. We have tabulated the mean and
average iteration numbers as well as the
average run times in \Cref{tbl:avg_run_times} for
$\varepsilon^0 \in \{25, 5, 1, \num{2e-1}, \num{4e-2}\}$.

\subsection{Control of the propagation speed in a linear wave equation}\label{sec:application_wave}

Let now $\Omega \subset \R^d$ with $d \in \{1,2,3\}$ be a bounded Lipschitz-regular domain. We next consider the control of the propagation speed in a damped wave equation. This problem falls outside of the developed theoretical framework; however, as shown later, the algorithm nevertheless performs well in numerical tests. More precisely, we consider the following problem:
\begin{equation}  \label{wave_eq} 
\left \{\begin{aligned}
&u_{tt}-\textup{div}(a(w)\nabla u)- b \Delta u_t= f \quad \text{on } \ &&\Omega \times (0,T), \\
&\frac{\partial u}{\partial n} =0 \quad \text{on } \ &&\partial \Omega \times (0,T),\\
&(u, u_t)_{\vert t=0}=(u_0, u_1). &&
\end{aligned} \right.
\end{equation}
In the context of acoustics, the wave equation given above describes propagation of sound waves through thermoviscous media. Here $u=u(x,t)$ represents the acoustic pressure, $a(w)$ corresponds to the speed of sound squared (controlled by $w$), and $b>0$ is the so-called sound diffusivity.
We assume that
\[
a(w)= c^2(1+w(x)),
\]
where $c>0$ is the reference value for the speed of sound in the medium and
\begin{equation} \label{def_Wad}
\begin{aligned}
w \in  \Wad= \{w \in \Linf: \ \uw \leq w \leq \ow \quad \textup{a.e.}  \text{ in } \Omega\}
\end{aligned}
\end{equation}
for some $\ow$, $\uw$, such that $1+ \uw > 0$. This setting is similar to the one of~\cite{clason2021optimal}, which studied coefficient-control of the undamped wave equation (i.e., with $b=0$ in \eqref{wave_eq}). The objective of the optimization process is to reach the desired pressure distribution in a focal area of interest $D \subset \Omega$:
\begin{equation*}
J(u)= \dfrac12 \int_0^T \int_D (u-u_\textup{d})^2 \, \textup{d}x \textup{d}s
\end{equation*}
for a given $u_\textup{d}\in L^2(0,T; \Ltwo)$. The admissible space for $u$ is
\begin{equation} \label{def_U}
\begin{aligned}
\calU = \{u \in L^\infty(0,T; H^1(\Omega)):& \ u_t \in  L^\infty(0,T; L^2(\Omega)) \cap L^2(0,T; H^1(\Omega)), \\  &\ u_{tt} \in L^2(0,T; \Hneg)\}.
\end{aligned}
\end{equation}
Control of a strongly damped (nonlinear) wave equation has been considered in~\cite{garcke2022phase} using a phase-field approach. We can can adapt the theoretical results from~\cite{garcke2022phase} to the present simpler setting in a straightforward manner.
\begin{proposition}[Well-posedness of the state problem]\label{Prop:WellPState} 
	Given $T>0$ and $b >0$, let $f \in L^2(0,T; \Hneg)$,  and 
	$(u_0, u_1) \in H^1(\Omega) \times L^2(\Omega)$.
	Then for every $w \in \Wad$,  there is a unique $u \in \mathcal{U}$, which solves
	\begin{equation}\label{weakformorprob}
	\begin{aligned}
	\begin{multlined}[t]
	\intO	 u_{tt}(t) v \dx+\intO (a(w) \nabla u(t)+b \nabla u_t(t)) \cdot \nabla v \dx 
	= \intO fv \dx
	\end{multlined}
	\end{aligned}
	\end{equation}
	a.e.\ in time  for all $v \in H^1(\Omega)$, with $(u, u_t)\vert_{t=0}=(u_0, u_1)$.	Furthermore, this solution satisfies the following energy estimate:
	\begin{equation} \label{energy_est_linear}
	\begin{aligned}
	\hspace{2em}&\hspace{-2em} \|u_t\|^2_{L^\infty(\Ltwo)} +\|\nabla u\|^2_{L^\infty(\Ltwo)}+ \|\nabla u_t\|^2_{L^2(\Ltwo)} \\
	&\leq \,C_1 \exp(C_2T)(\| u_0\|_{\Hone}^2+\|u_1\|^2_{\Ltwo}+\|f\|^2_{L^2(\Hneg)}),
	\end{aligned}
	\end{equation}
	where $C_{1,2}>0$ do not depend on $w$.
\end{proposition}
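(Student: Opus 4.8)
The plan is to construct the solution by a Galerkin approximation and to obtain the a priori bound \eqref{energy_est_linear} by testing with the velocity $\ut$, following and simplifying the strongly-damped-wave analysis of \cite{garcke2022phase}. First I would fix a Galerkin basis $\{\phi_k\}_{k\in\N}$ of $\Hone$ — most naturally the eigenfunctions of the Neumann Laplacian, which are orthogonal in both $\Ltwo$ and $\Hone$ — and seek approximants $u_n(t)=\sum_{k=1}^n d^n_k(t)\phi_k$ in $V_n\coloneqq\operatorname{span}\{\phi_1,\dots,\phi_n\}$ solving \eqref{weakformorprob} for all $v\in V_n$ with suitably projected initial data $(u_0,u_1)$. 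Since $w$ is fixed, the system for the coefficient vector $d^n$ is a linear second-order ODE whose coefficients do not depend on time, so standard linear ODE theory (Picard--Lindel\"{o}f) furnishes a unique $u_n$ on all of $[0,T]$.

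Next I would derive the uniform energy bound. Testing the Galerkin system with $v=u_{n,t}$ and using that $a(w)$ is time-independent yields
\begin{gather*}
\frac{1}{2}\frac{\dd}{\dd t}\left(\|u_{n,t}\|_{\Ltwo}^2 + \intO a(w)|\nabla u_n|^2 \dx\right) + b\|\nabla u_{n,t}\|_{\Ltwo}^2 = \langle f, u_{n,t}\rangle.
\end{gather*}
The bounds $c^2(1+\uw)\le a(w)\le c^2(1+\ow)$, valid for every $w\in\Wad$ precisely because $1+\uw>0$, make $\intO a(w)|\nabla u_n|^2\dx$ equivalent to $\|\nabla u_n\|_{\Ltwo}^2$ with constants depending only on $c,\uw,\ow$. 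I would estimate the right-hand side by $\langle f,u_{n,t}\rangle\le\frac{1}{2b}\|f\|_{\Hneg}^2+\frac{b}{2}\|u_{n,t}\|_{\Hone}^2$, absorb $\frac{b}{2}\|\nabla u_{n,t}\|_{\Ltwo}^2$ into the dissipation term on the left, integrate in time, and close with Gr\"{o}nwall's inequality. This produces \eqref{energy_est_linear} for $u_n$ with constants $C_1,C_2$ independent of $n$ and, crucially, of $w$; the $w$-independence is the entire point and follows immediately from the uniform ellipticity bounds on $a(w)$.

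With these bounds I would extract a subsequence converging weakly-$^*$ in $L^\infty(0,T;\Hone)$ (for $u_n$) and weakly-$^*$ in $L^\infty(0,T;\Ltwo)$ as well as weakly in $L^2(0,T;\Hone)$ (for $u_{n,t}$). A bound on $u_{n,tt}$ in $L^2(0,T;\Hneg)$ is read off the equation, $u_{n,tt}=f+\dvg(a(w)\nabla u_n+b\nabla u_{n,t})$, whose right-hand side is controlled in $L^2(0,T;\Hneg)$ by the energy estimate. Because the problem is linear in $u$ with $w$ frozen, weak convergence suffices to pass to the limit in \eqref{weakformorprob}, with no compactness for nonlinear terms required. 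The limit $u$ then lies in $\calU$, the estimate \eqref{energy_est_linear} survives by weak lower semicontinuity of the norms, and the initial conditions $(u,\ut)\vert_{t=0}=(u_0,u_1)$ are recovered by the standard continuity-in-time argument, the regularity of $\calU$ giving $u\in C([0,T];\Ltwo)$ and $\ut\in C([0,T];\Hneg)$ so that the traces make sense.

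Finally, uniqueness follows from linearity: the difference of two solutions solves the homogeneous problem with zero data, and testing it with its own velocity $\ut\in L^2(0,T;\Hone)$ forces the energy to vanish identically. The main obstacle I anticipate is the rigorous justification of this energy identity on the limit, namely the relation $\langle\utt,\ut\rangle=\tfrac{1}{2}\frac{\dd}{\dd t}\|\ut\|_{\Ltwo}^2$ for a pair $\utt\in L^2(0,T;\Hneg)$, $\ut\in L^2(0,T;\Hone)$, which is not a direct computation but requires the Lions--Magenes integration-by-parts lemma; the same lemma underpins the attainment of the initial data. Everything else is a routine adaptation of \cite{garcke2022phase} to this simpler linear coefficient-control setting.
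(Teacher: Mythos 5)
Your proposal is correct and follows essentially the same route as the paper: the paper simply outsources the Faedo--Galerkin construction to \cite[Theorem 1, Ch.\ 5]{dautray1992evolution} and obtains \eqref{energy_est_linear} by the same test with $v=u_t(t)$, with the $w$-uniformity of $C_1,C_2$ coming, exactly as you say, from the uniform bounds $c^2(1+\uw)\le a(w)\le c^2(1+\ow)$. You have merely written out the details (including the Lions--Magenes justification of the energy identity) that the paper delegates to its references.
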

\begin{proof}
Unique solvability follows, for example, by~\cite[Theorem 1, Ch.\ 5]{dautray1992evolution}. Estimate \eqref{energy_est_linear} follows by testing the problem with $v=u_t(t) \in H^1(\Omega)$ and a straightforward modification of the energy arguments in, e.g.,~\cite[Proposition 3.1]{garcke2022phase}. Note that from \eqref{energy_est_linear} and the PDE, we also have the following bound:
\begin{align*}
\hspace{-2em} \|u_{tt}\|_{L^2(\Hneg)}
&\lesssim \|\nabla u\|_{L^2(\Ltwo)} + \|\nabla u_t\|_{L^2(\Ltwo)}+\|f\|_{L^2(\Hneg)} \\
&\le C_1 \exp(C_2T)\left(\| u_0\|_{\Hone}+\|u_1\|_{\Ltwo}+\|f\|_{L^2(\Hneg)}\right);
\end{align*}
see, e.g.~\cite[Chapter 7]{evans2010partial} for similar arguments.
\end{proof}
Thanks to the above well-posedness result, the operator $S: \Wad \rightarrow \calU$ is well-defined.
The next result established the differentiability of the control-to-state mapping.
\begin{proposition}\label{Prop:Cont_u_w} Under the assumptions of \cref{Prop:WellPState}, let $\tilde{u}$ and $u$ be the solutions of the state problem corresponding to the controls $\tilde{w}$ and $w$, respectively. Then
\begin{equation} \label{continuity_u_w}
\begin{aligned}
\|\tilde{u} -u\|_{\calU} \leq C(u_0, u_1, T) \|\tw -w \|_{\Linf}. 
\end{aligned}
\end{equation}
\end{proposition}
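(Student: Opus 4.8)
The plan is to set $\hat{u} \coloneqq \tilde{u} - u$, subtract the two weak formulations \eqref{weakformorprob} satisfied by $\tilde{u} = S(\tilde{w})$ and $u = S(w)$ (which share the same data $f$, $u_0$, $u_1$), and show that $\hat{u}$ solves a damped wave equation of the same structure, with vanishing initial data and a source term that is linear in $\tilde{w} - w$. The key algebraic step is to rewrite the difference of the stiffness terms as
\[
a(\tilde{w})\nabla\tilde{u} - a(w)\nabla u = a(\tilde{w})\nabla\hat{u} + \left(a(\tilde{w}) - a(w)\right)\nabla u = a(\tilde{w})\nabla\hat{u} + c^2(\tilde{w} - w)\nabla u,
\]
using $a(w) = c^2(1+w)$. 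Hence $\hat{u}$ satisfies, a.e.\ in time and for all $v \in H^1(\Omega)$,
\[
\intO \hat{u}_{tt}(t) v \dx + \intO \left(a(\tilde{w})\nabla\hat{u}(t) + b \nabla\hat{u}_t(t)\right)\cdot\nabla v \dx = -c^2 \intO (\tilde{w} - w)\,\nabla u(t)\cdot\nabla v \dx,
\]
with $(\hat{u}, \hat{u}_t)\vert_{t=0} = (0,0)$. Since $\tilde{w} \in \Wad$ gives $a(\tilde{w}) \ge c^2(1+\uw) > 0$ together with a uniform upper bound, the coefficient of $\hat{u}$ meets the same structural hypotheses as in \cref{Prop:WellPState}, and because the constants $C_{1,2}$ in \eqref{energy_est_linear} are independent of the control, the energy estimate applies verbatim to $\hat{u}$.

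Next I would treat the right-hand side as a source $g(t) \in \Hneg$ defined by the pairing on the right, and estimate it by the Cauchy--Schwarz inequality as $\|g(t)\|_{\Hneg} \le c^2 \|\tilde{w} - w\|_{\Linf}\,\|\nabla u(t)\|_{\Ltwo}$. Integrating in time and using that $u$ itself obeys \eqref{energy_est_linear}, so that $\nabla u \in L^\infty(\Ltwo)$, yields
\[
\|g\|_{L^2(\Hneg)} \le c^2\sqrt{T}\,\|\tilde{w} - w\|_{\Linf}\,\|\nabla u\|_{L^\infty(\Ltwo)} \le C(u_0,u_1,T)\,\|\tilde{w} - w\|_{\Linf}.
\]

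Applying the uniform estimate \eqref{energy_est_linear} to $\hat{u}$ (with zero initial data and right-hand side $g$) then gives
\[
\|\hat{u}_t\|^2_{L^\infty(\Ltwo)} + \|\nabla\hat{u}\|^2_{L^\infty(\Ltwo)} + \|\nabla\hat{u}_t\|^2_{L^2(\Ltwo)} \le C(u_0,u_1,T)^2\,\|\tilde{w} - w\|^2_{\Linf}.
\]
To control the remaining component $\|\hat{u}_{tt}\|_{L^2(\Hneg)}$ of the $\calU$-norm, I would read $\hat{u}_{tt}$ off from its own equation exactly as in the proof of \cref{Prop:WellPState}, obtaining $\|\hat{u}_{tt}(t)\|_{\Hneg} \lesssim \|\nabla\hat{u}(t)\|_{\Ltwo} + \|\nabla\hat{u}_t(t)\|_{\Ltwo} + \|g(t)\|_{\Hneg}$ thanks to the boundedness of $a(\tilde{w})$, and then integrate in time; each term on the right is already dominated by $\|\tilde{w}-w\|_{\Linf}$ (bounding $\|\nabla\hat u\|_{L^2(\Ltwo)} \le \sqrt{T}\,\|\nabla\hat u\|_{L^\infty(\Ltwo)}$). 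Collecting the four contributions gives \eqref{continuity_u_w}.

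I expect the argument to be essentially routine once \cref{Prop:WellPState} is available; there is no genuinely hard step. The one point requiring care is precisely the uniformity of the energy-estimate constants with respect to the control, which is what legitimizes applying \eqref{energy_est_linear} to $\hat{u}$ even though its stiffness coefficient is $a(\tilde{w})$ rather than a fixed coefficient. A secondary point worth noting is that the source $g$ is controlled only in $L^2(\Hneg)$, which is consistent with stating the continuity estimate in the $\calU$-norm and not in a stronger topology.
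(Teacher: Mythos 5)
Your proof is correct and follows essentially the same route as the paper: write the equation for the difference with zero initial data, move the coefficient discrepancy to the right-hand side as a source of order $\|\tilde w - w\|_{\Linf}$, and invoke the control-independent energy estimate of \cref{Prop:WellPState} (the paper performs the energy test with $v=\bar u_t$ directly rather than citing the estimate, and keeps $a(w)$ on the difference with $\nabla\tilde u$ in the source, the mirror image of your decomposition -- an immaterial choice). Your explicit treatment of the $\hat u_{tt}$ component and of the uniformity of the constants is slightly more careful than the paper's one-line argument, but adds nothing essentially new.
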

\begin{proof}
As the difference $\bar{u}=\tilde{u}-u$ solves
	\begin{equation*}
	\begin{aligned}
	\intO	 \bar{u}_{tt}(t) v \dx+\intO (a(w) \nabla \bar{u}(t)+b \nabla \bar{u}_t(t)) \cdot \nabla v \dx =\, - \intO (a(\tw)-a(w)) \nabla \tilde{u} \cdot \nabla v \ds,
	\end{aligned}
	\end{equation*}
for all $v \in H^1(\Omega)$ a.e.\ in time, with zero initial data, the statement follows by testing the above weak form with $v= \bar{u}_t(t) \in H^1(\Omega)$, integrating over time, and using the fact that $\tilde{u} \in \calU$.
\end{proof}

\begin{proposition} \label{Prop:FDifferentiability}
Under the assumptions of \cref{Prop:WellPState},  the control-to-state operator $S: \Wad \rightarrow \mathcal{U}$ is well-defined. Furthermore, it is Fr\'echet differentiable and its directional derivative at $w$ in the direction of $\xi \in L^\infty(\Omega)$ is given by $S'(w) \xi=u^{*, \xi}$, where $u^{*,\xi}$ is the unique solution of
\begin{equation}\label{diff_state_}
\begin{aligned}
\hspace{2em}&\hspace{-2em}
\intO	 u^{*,\xi}_{tt}(t) v \dx+\intO (a(w) \nabla u^{*,\xi}(t)+b \nabla u^{*,\xi}_t(t)) \cdot \nabla v \dx 
= -c^2\intO \xi \nabla S(w)(t) \cdot \nabla v \dx  
\end{aligned}
\end{equation}
a.e.\ in time for all $v \in H^1(\Omega)$, with $(u^{*,\xi}, u^{*,\xi}_t)\vert_{t=0}=(0, 0)$. Furthermore, it holds
\begin{equation} \label{est_u_*_u}
\begin{aligned}
\hspace{2em}&\hspace{-2em}
\|u^{*,\xi}\|_{\calU}  \leq C\left(T \right) \|\xi\|_{\Linf}(\|u_0\|_{\Hone}+\|u_1\|_{\Ltwo}+\|f\|_{L^2(\Hneg)}).
\end{aligned}
\end{equation}
\end{proposition}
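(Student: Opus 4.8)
The plan is to identify the candidate derivative as the solution operator of the linearized problem \eqref{diff_state_} and then control the associated Taylor remainder by the energy method. First I would establish the well-posedness of \eqref{diff_state_}: its principal part coincides with that of the state problem treated in \cref{Prop:WellPState}, while its right-hand side is the functional $v \mapsto -c^2 \intO \xi \nabla S(w)(t) \cdot \nabla v \dx$ on $\Hone$. Since this functional is bounded on $\Hone$ with norm at most $c^2 \|\xi\|_{\Linf} \|\nabla S(w)(t)\|_{\Ltwo}$, it defines a source in $L^2(0,T;\Hneg)$ whose norm is $\lesssim \|\xi\|_{\Linf} \|\nabla S(w)\|_{L^2(\Ltwo)}$. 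Invoking \cref{Prop:WellPState} with this source, and using the energy estimate \eqref{energy_est_linear} applied to $S(w)$ to bound $\|\nabla S(w)\|_{L^2(\Ltwo)}$ by the data, yields a unique $\ustarxi \in \calU$ satisfying \eqref{est_u_*_u}. Linearity of $\xi \mapsto \ustarxi$ is immediate from the linearity of \eqref{diff_state_} in $(\ustarxi,\xi)$ together with the zero initial data, so $\xi \mapsto \ustarxi$ is a bounded linear map $\Linf \to \calU$, the natural candidate for $S'(w)$.

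Next I would set up the remainder equation. For admissible perturbations I write $\tilde{u} = S(w+\xi)$, $u = S(w)$, $\bar{u} = \tilde{u} - u$, and $r \coloneqq \bar{u} - \ustarxi$. From the difference equation in the proof of \cref{Prop:Cont_u_w}, and using $a(w+\xi) - a(w) = c^2 \xi$, the increment $\bar{u}$ solves the linearized operator driven by $v \mapsto -c^2 \intO \xi \nabla \tilde{u}\cdot \nabla v \dx$. Subtracting \eqref{diff_state_}, whose source involves $\nabla u$ rather than $\nabla \tilde u$, I obtain that $r$ solves the same linear damped wave equation with zero initial data and right-hand side $v \mapsto -c^2 \intO \xi \nabla \bar{u}\cdot \nabla v \dx$. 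The crucial point is that this source is now quadratically small, since it carries the increment $\nabla\bar{u}$ instead of the fixed field $\nabla u$.

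Finally I would apply the energy estimate of \cref{Prop:WellPState} to $r$. The source lies in $L^2(0,T;\Hneg)$ with norm $\lesssim \|\xi\|_{\Linf}\|\nabla\bar{u}\|_{L^2(\Ltwo)}$, so
\[
\|r\|_{\calU} \lesssim \|\xi\|_{\Linf}\,\|\nabla\bar{u}\|_{L^2(\Ltwo)} \le \|\xi\|_{\Linf}\,\|\bar{u}\|_{\calU} \lesssim \|\xi\|_{\Linf}^2,
\]
where the last step uses the Lipschitz estimate \eqref{continuity_u_w} from \cref{Prop:Cont_u_w}. Hence $\|S(w+\xi) - S(w) - \ustarxi\|_{\calU} = O(\|\xi\|_{\Linf}^2) = o(\|\xi\|_{\Linf})$, which is exactly Fr\'echet differentiability with $S'(w)\xi = \ustarxi$, while \eqref{est_u_*_u} has already been recorded in the first step.

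The main obstacle is not the algebra but the rigorous justification of the energy estimates for these divergence-form sources: as in \cref{Prop:WellPState}, testing with $v = r_t(t)$ is only formal and must be carried out within a Galerkin (or equivalent) approximation before passing to the limit, while one must check that the functional $v \mapsto -c^2\intO \xi\nabla\bar{u}\cdot\nabla v\dx$ genuinely defines an element of $L^2(0,T;\Hneg)$ so that the well-posedness framework of \cref{Prop:WellPState} applies verbatim. A secondary point is that $S$ is a priori defined only on $\Wad$; Fr\'echet differentiability should therefore be understood for admissible directions, or after extending $S$ to an $\Linf$-neighborhood of $w$, which is harmless here since the a priori estimates depend on $w$ only through the uniform coercivity guaranteed by $1 + \uw > 0$.
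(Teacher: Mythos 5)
Your proposal is correct and follows essentially the same route as the paper: well-posedness of \eqref{diff_state_} via \cref{Prop:WellPState} with the divergence-form source in $L^2(0,T;\Hneg)$, the remainder $r=S(w+\xi)-S(w)-\ustarxi$ solving the same linear damped wave equation with the quadratically small source $v\mapsto -c^2\intO \xi\,\nabla(S(w+\xi)-S(w))\cdot\nabla v\dx$, and the energy estimate combined with the Lipschitz bound \eqref{continuity_u_w} to conclude $\|r\|_{\calU}=O(\|\xi\|_{\Linf}^2)$. Your closing remarks on the formal nature of testing with $r_t$ and on the domain of definition of $S$ are sensible caveats that the paper handles implicitly by reference to \cite{garcke2022phase}.
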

\begin{proof}
The proof of the statement follows similarly to \cite[Proposition 4.2 and Theorem 4.1]{garcke2022phase}; the main difference is that we are in a lower-order setting compared to \cite{garcke2022phase} in terms of the regularity of data (on the other hand, we are working with a simpler equation). We provide some details here as they are relevant for discussing this problem in the context of the theoretical framework of the paper. Given $\xi \in L^\infty(\Omega)$, let \[r=S(w+\xi)-S(w)-u^{*, \xi}.\]
Note that the well-posedness of \eqref{diff_state_} follows by \cref{Prop:WellPState} with the right-hand side set to $f= \textup{div}(c^2 \xi \nabla u) \in L^2(0,T; \Hneg)$.  We can see $r$ as the solution of the following problem:	
\begin{equation}\label{rv}
\begin{aligned}
& \intO	 r_{tt} v \dx+\intO (a(w) \nabla r+b \nabla r_t) \cdot \nabla v \dx 
=\, - \intO c^2 \xi \left(S(w+\xi)-S(w)\right)\cdot \nabla v \dx
\end{aligned}
\end{equation}
for all test functions $v \in H^{1}(\Omega)$, a.e.\ in time, supplemented by zero initial conditions. To show that
\[\|r\|_X =o(\|\xi\|_{\Linf}) \quad \text{ as } \|\xi\|_{\Linf} \rightarrow 0.\]
we choose $v=r_t(t) \in \Hone$ in \eqref{rv} and estimate the resulting right-hand side as follows:
\begin{equation} \label{est_rhs_diff}
\begin{aligned}
&-\intO c^2 \xi \nabla \left(S(w+\xi)-S(w)\right)\cdot \nabla r_t \dx \\
&\lesssim \|\xi\|_{\Linf} \|\nabla( S(w+\xi)-S(w))\|_{\Ltwo} \|\nabla r_t\|_{\Ltwo}  
\end{aligned}
\end{equation}
a.e.\ in time. The statement then follows by the continuity of the mapping S established in Proposition~\ref{Prop:Cont_u_w}.
\end{proof}

Let $w \in \Wad$ and let $u=S(w) \in \mathcal{U}$ be the corresponding state. On account of
\cref{Prop:WellPState}, we can introduce the reduced functional $j(w)=J(S(w))$.
\begin{proposition}
Under the assumptions of of \cref{Prop:WellPState}, the reduced functional $j:  \Wad \rightarrow \R$ is Fr\'echet differentiable.
\end{proposition}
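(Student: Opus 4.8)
The plan is to realize $j$ as a composition of Fréchet-differentiable maps and then invoke the chain rule. Concretely, I would factor
\[ j = J \circ \iota \circ S, \]
where $S : \Wad \to \calU$ is the control-to-state operator from \cref{Prop:WellPState}, $\iota : \calU \hookrightarrow \LtwoLtwo$ is the embedding arising from $\calU \subset L^\infty(0,T;\Hone) \hookrightarrow \LtwoLtwo$, and $J : \LtwoLtwo \to \R$, $J(u) = \tfrac12\int_0^T\int_D (u-\ud)^2\dxs$, is the tracking functional (which is finite on $\LtwoLtwo$ since $D \subset \Omega$).

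First I would record the Fréchet differentiability of each factor. \Cref{Prop:FDifferentiability} already yields that $S$ is Fréchet differentiable with $S'(w)\xi = \ustarxi$, the solution of the linearized problem \eqref{diff_state_}. The map $\iota$ is bounded and linear, hence Fréchet differentiable with $\iota'(u) = \iota$. For $J$, its quadratic structure gives, for $u,\delta u \in \LtwoLtwo$,
\[ J(u+\delta u) - J(u) = \int_0^T\!\int_D (u-\ud)\,\delta u \dxs + \tfrac12\int_0^T\!\int_D (\delta u)^2 \dxs, \]
where the final term is bounded by $\tfrac12\|\delta u\|_{\LtwoLtwo}^2 = o(\|\delta u\|_{\LtwoLtwo})$ and the linear term $\delta u \mapsto \int_0^T\int_D (u-\ud)\,\delta u\dxs$ is bounded on $\LtwoLtwo$ by Cauchy--Schwarz. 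Hence $J$ is Fréchet differentiable with $J'(u)[\delta u] = \int_0^T\int_D (u-\ud)\,\delta u\dxs$.

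The chain rule for Fréchet derivatives then applies: since $S$ is Fréchet differentiable at $w$, $\iota$ everywhere, and $J$ at $\iota(S(w))$, the composition $j = J\circ\iota\circ S$ is Fréchet differentiable, with
\[ j'(w)\xi = J'(S(w))\big[\ustarxi\big] = \int_0^T\!\int_D (S(w)-\ud)\,\ustarxi \dxs. \]

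The only point requiring care — and the (mild) main obstacle — is the compatibility of the spaces in the chain rule: $S'(w)$ takes values in $\calU$ whereas $J$ is differentiable on $\LtwoLtwo$, so one must route the composition through the continuous embedding $\iota$ and verify that $J$ is genuinely Fréchet (not merely Gâteaux) differentiable on $\LtwoLtwo$, which the quadratic remainder estimate above supplies. A secondary subtlety is that $\Wad$ is a closed convex subset of $\Linf$ rather than an open set, so Fréchet differentiability of $j$ is understood in the same (relative) sense already adopted for $S$ in \cref{Prop:FDifferentiability}; the estimate \eqref{est_u_*_u} then guarantees that $j'(w)$ is a bounded linear functional of the direction $\xi$.
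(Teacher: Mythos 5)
Your proposal is correct and is precisely the standard chain-rule argument that the paper itself omits, deferring instead to \cite[Proposition 5.2]{garcke2022phase}: Fr\'echet differentiability of $S$ from \cref{Prop:FDifferentiability}, the continuous embedding $\calU \hookrightarrow \LtwoLtwo$, and the quadratic remainder estimate for the tracking functional combine to give $j'(w)\xi = \int_0^T\int_D (S(w)-\ud)\,\ustarxi\dxs$, matching the paper's stated derivative (up to the paper's own $D$ versus $\Omega$ notation). Your remarks on the relative sense of differentiability on the non-open set $\Wad$ and on the boundedness of $j'(w)$ via \eqref{est_u_*_u} are consistent with how the paper treats $S$.
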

\begin{proof}
The proof follows similarly to \cite[Proposition 5.2]{garcke2022phase} and we thus again omit the details.
\end{proof}
Note that we have
\begin{equation}
\begin{aligned}
j'(w) \xi =&\, \int_0^T \int_\Omega (u-u_{\textup{d}}) u^{*, \xi} \dxs
\end{aligned}
\end{equation}
with $u^{*, \xi}$ given by \eqref{diff_state_}. The adjoint problem is  given by
\begin{equation} \label{adjoint}
\begin{aligned}
\begin{multlined}[t]
\intO p_{tt}{v} \dx +\intO (a(w) \nabla p(t)-b \nabla p_t) \cdot \nabla v \dx = \int_\Omega (u-u_\textup{d})v \dxs ,
\end{multlined}
\end{aligned}
\end{equation}
for all $v \in H^1(\Omega)$ a.e.\ in time, with $(p, p_t)\vert_{t=T}=(0,0)$. After time reversal
$t \mapsto p(T - t)$, its unique solvability follows by \cref{Prop:WellPState} together with the bound
\begin{equation*}
\begin{aligned}
\|p\|_{\mathcal{U}} \leq C(T) \|u-\ud\|_{L^2(L^2(\Omega))}.  
\end{aligned}
\end{equation*}
Using the adjoint problem, we then find
\begin{equation} \label{reduced_der}
\begin{aligned}
j'(w) \xi =  \begin{multlined}[t]-\int_0^T \intO a'(w)\xi \nabla u \cdot \nabla p \dxs.  \end{multlined}
\end{aligned}
\end{equation}

Looking at \eqref{est_rhs_diff}, it does not seem feasible in this setting to have an estimate involving only $\xi \in L^q(\Omega)$ for some $q < \infty$ so as to come closer to satisfying \cref{ass:general_var}. Thus this acoustic problem does not fit into our set of assumptions for the algorithmic framework so far. As a possible
alleviation, one may want to introduce a regularization of the input by means of a
mollification as is proposed in \cite{bociu2022input}. However, the continuity
with respect to $L^\infty$ is not enough to pass to the limit in the mollification.
We intend to use non-standard elliptic regularity theory to improve on the properties
of $j'$ in the future.
Nevertheless, we demonstrate in the next section that the proposed algorithm still 
shows a sensible output when applied to this setting.

\subsection{Numerical experiments for the acoustic field}\label{sec:numerical_experiments}

We next wish to provide a qualitative assessment of the algorithm in practice.
To this end, we first describe the used discretization in \cref{sec:state_equation} as well as the setup and our implementation of \cref{alg:trm}
in \cref{sec:trm_setup}. The numerical results are provided in~\cref{sec:results}.

\subsubsection{Discretization of the state and adjoint problems}\label{sec:state_equation}

We follow the numerical approach of~\cite{clason2021optimal} and adapt the implementation used there\footnote{\url{https://github.com/clason/tvwavecontrol}} to incorporate strong damping in the wave equation. 
More precisely, we employ continuous piecewise linear finite elements in space and time to discretize the state and adjoint problems. We briefly discuss the extension of the numerical discretization in~\cite{clason2021optimal} to the strongly damped acoustic setting (that is, having $b>0$ in \eqref{wave_eq}).  \\
\indent Let $\mathcal{T}_h=\{T\}$ be a mesh consisting of triangles or tetrahedra $T$ with the mesh size $h$ and let $D_h \subset H^1(\Omega) \cap C(\overline{\Omega})$ be the corresponding finite element space consisting of continuous piecewise linear functions. We further define the finite element space $D_\tau \subset H^1(0,T) \subset C[0,T]$ of continuous piecewise linear functions associated with the uniform discretization $0=t_0 < t_1 < \ldots <t_{N_{\tau}}=T$, where $\tau= t_{i+1}-t_i$, $i \in \{0, \ldots, N_{\tau}-1\}$. Let $\{e_i\}$ be the basis (hat) functions of $D_\tau$.\\
\indent Let $\nu := (h, \tau)$ and $D_{\nu}:=D_h \otimes D_{\tau}$. We look for the approximate solution $u_{\nu} \in D_{\nu}$, which satisfies
\begin{equation} \label{discrete_spacetime_problem}
\begin{aligned}
\begin{multlined}[t]      \int_0^T \intO \Big\{- \partial_t u_{\nu} \partial_t v -(\sigma-\frac16) \tau^2 a(w_h) \nabla \partial_t u_{\nu}\cdot \nabla \partial_t v 
+ a(w_h) \nabla u_{\nu} \cdot \nabla v \\ \hspace*{4cm}+ b \nabla \partial_t u_{\nu} \cdot \nabla v \Big\}\dxs 
= \intO u_1 v(0) \dx + \int_0^T \int_{\Omega} f v \dxs
\end{multlined}
\end{aligned}
\end{equation}
for all $v \in D_{\nu}$ with $v(T)=0$, and $u_\nu(0)= P_0 u_0$, where $P_0$ is the $\Ltwo$ projection operator defined by
\begin{equation}
(P_0 u_0, \varphi)_{\Ltwo} = (u_0, \varphi)_{\Ltwo} \quad \text{for all }\ \varphi \in D_h.
\end{equation}
In \eqref{discrete_spacetime_problem}, $\sigma>0$ denotes the stabilization parameter; cf.~\cite[Def.\ 5.1]{clason2021optimal}. Problem \eqref{discrete_spacetime_problem} can be restated as a time-stepping scheme using the fact that
\begin{equation*}
\begin{aligned}
u_{\nu}(t)= \begin{cases}
\dfrac{t_i-t}{t_{i}-t_{i-1}}u_h(t_{i-1}) +\dfrac{t-t_{i-1}}{t_{i}-t_{i-1}}u_h(t_{i}), \quad  \ t\in [t_{i-1}, t_{i}],\\[2mm]
\dfrac{t_{i+1}-t}{t_{i+1}-t_{i}}u_h(t_{i}) +\dfrac{t-t_{i}}{t_{i+1}-t_{i}}u_h(t_{i+1}), \quad t \in [t_{i}, t_{i+1}], 
\end{cases}  
\end{aligned}
\end{equation*}
and expressing the test function as $v = \displaystyle \sum_{i=0}^{N_{\tau}-1} e_i(t) \phi_i(x)$. Indeed, let $u_h^0= P_0 u_0$. We first compute $u_h^1$ using the fact that it solves
\begin{equation}
\begin{aligned}
\begin{multlined}[t]    \left (\frac{u_h^1-u_h^0}{\tau}, \phi\right)_{\Ltwo}+ \tau (a(w_h) \nabla (\sigma u_h^1+(\frac12-\sigma) u_h^0), \nabla \phi)_{\Ltwo} \\
\hspace*{2.7cm} + \frac{b}{2} (\nabla (u_h^1-u_h^0), \nabla \phi)_{\Ltwo} 
= (u_1, \phi)_{\Ltwo}+ \left(\int_0^{t_1} f e_0, \phi\right)_{L^2(\Omega)}
\end{multlined}  
\end{aligned}    
\end{equation}
for all $\phi \in D_h$. Then given $(u_h^{i-1},u_h^i)$, we can compute $u_h^{i+1}$ since it solves
\begin{equation} \label{time_stepping}
\begin{aligned}
\begin{multlined}[t]    \left(\frac{u^{i+1}_h-2u_h^i+u_h^{i-1}}{\tau}, \phi\right)_{\Ltwo} + \tau \left(a(w_h) \nabla (\sigma u_h^{i+1}+ (1-2\sigma) u_h^i+ \sigma u_h^{i-1}, \nabla \phi \right)_{\Ltwo} \\
+ \frac{b}{2} \left(\nabla (u_{h}^{i+1} - u_h^{i-1}), \nabla \phi\right)_{\Ltwo} = \left(\int_{t_{i-1}}^{t_{i+1}} f e_i\ds, \phi\right)_{L^2(\Omega)}
\end{multlined} 
\end{aligned}
\end{equation}
for $1 \leq i \leq N_{\tau}-1$ and all $\phi \in D_h$. \\
\indent Concerning the adjoint problem, discrete adjoint state $p_{\nu}= \displaystyle \sum_{i=1}^{N_\tau}p_h^i(x) e_i(t) \in D_{\nu}$  satisfies the following problem:
\begin{equation} \label{discrete_spacetime_adjoint_problem}
\begin{aligned}
\begin{multlined}[t]      \int_0^T \intO \Big\{- \partial_t v \partial_t p_{\nu} -(\sigma-\frac16) \tau^2 a(w_h) \nabla \partial_t v \cdot \nabla \partial_t p_{\nu}
+ a(w_h) \nabla v \cdot \nabla p_{\nu} \\ \hspace*{5cm}+ b \nabla \partial_t v \cdot \nabla p_{\nu} \Big\}\dxs 
=  \int_0^T \intO (u_{\nu}-u_{\textup{d}}) v \dxs
\end{multlined}
\end{aligned}
\end{equation}
for all $v \in D_{\nu}$ with $v(0)=0$, and $p_{\nu}(T)=0$. This problem can  be reformulated as a time-stepping scheme analogously to the state equation. Indeed, starting from $p_h^{N_\tau}=0$, we compute $p_h^{N_{\tau-1}}$ using the fact that it solves
\begin{equation}
\begin{aligned}
    \begin{multlined}[t]    -\left (\frac{p_h^{N_\tau}-p_h^{N_\tau-1}}{\tau}, \phi\right)_{\Ltwo}+ \tau (a(w_h) \nabla (\sigma p_h^{N_\tau-1}+(\frac12-\sigma) p_h^{N_\tau}), \nabla \phi)_{\Ltwo} \\
    + \frac{b}{2} (\nabla (p_h^{N_\tau}+p_h^{N_\tau-1}), \nabla \phi)_{\Ltwo} 
    =  \int_{N_\tau-1}^{N_\tau} \intO (u_{\nu}-u_{\textup{d}}) e_{N_\tau} \dxs.
    \end{multlined}  
\end{aligned}    
\end{equation}
Given $(p_h^{i+1}, p_h^i)$, we can compute $p_h^{i-1}$ using the fact that
\begin{equation} \label{time_stepping_adjoint}
    \begin{aligned}
   \begin{multlined}[t]    \left(\frac{p^{i+1}_h-2p_h^i+p_h^{i-1}}{\tau}, \phi\right)_{\Ltwo} + \tau \left(a(w_h) \nabla (\sigma p_h^{i+1}+ (1-2\sigma) p_h^i+ \sigma p_h^{i-1}, \nabla \phi \right)_{\Ltwo} \\
  - \frac{b}{2} \left(\nabla (p_{h}^{i+1} - p_h^{i-1}), \nabla \phi\right)_{\Ltwo} = \left(\int_{t_{i-1}}^{t_{i+1}} (u_\nu-u_{\textup{d}}) e_i\ds, \phi\right)_{L^2(\Omega)}
    \end{multlined} 
    \end{aligned}
\end{equation}
for $1 \leq i \leq N_{\tau}-1$ and all $\phi \in D_h$.
The considerations concerning the properties of the discrete control-to-state operator follow analogously to Section 5.1 in \cite{clason2021optimal}, so we omit them here.

\subsubsection{Setup}\label{sec:trm_setup}

We consider a two-dimensional rectangular spatial domain $\Omega = (-1,1) \times (-1,2)$ that are discretized into $96 \times 96$ squares, which are subdivided
into two triangles each and a time horizon from $0$ to $T = 5$
that is discretized into $256$ intervals. We choose $c^2 = 20$
and $b = 1.25 \cdot 10^{-2}$. 
We choose CG1 elements as ansatz functions for the state vector
as well as the control vector.
We use the \texttt{FEniCSx} library \cite{ScroggsEtal2022,BasixJoss,AlnaesEtal2015}
and the space-time discretization from \cite{clason2021optimal} described in the section above
in order to solve the PDE and its adjoint.
We use a tracking-type functional and choose $\gamma = 7.5 \cdot 10^{-6}$ in
the objective. In order to approximate the $L^1$-norm in the trust-region 
subproblems, we compute the $L^1$-norm of the function that is obtained by 
reflecting the negative node values (our ansatz has a nodal basis) at the
origin. We choose the acoustic source term $f$ as the Ricker wavelet as
in \cite{clason2021optimal} and set $\sigma = 0.25$.

Regarding \cref{alg:trm}, as in \cref{sec:application_elliptic_control},
we only solve instances of the convex subproblems 
\eqref{eq:tr_cvx}. We initialize $\varepsilon^0 = 1$, $\Delta^0 = 1.5$,
$\rho = 10^{-4}$, $\kappa^0 = 10^{-8}$, and
$\underline{\Delta}^0 = 1.14 \cdot 10^{-5}$. The initial control $w^0$
is the constant function with the value $0.5$.

We execute our algorithm on a laptop computer
with an Intel(R) Core(TM) i7-11850H CPU (2.50\,GHz) and 64\,GB RAM.
We solve the discretized subproblems with Gurobi 10.0.0 \cite{Gurobi}.
We stop the algorithm when no futher progress is made, that is $\varepsilon^n$
is reduced without any step having been accepted for the previous value of 
$\varepsilon^n$. 

\subsubsection{Results}\label{sec:results}

The execution of our implementation requires 174 iterations, of which 62 are accepted, which are broken down for the different values of 
$\varepsilon^n$ in \cref{tbl:accepted_iterations_per_varepsilon}. The number of accepted iterations is between 11 and 27 for the first
three values of $\varepsilon^n$ drops to $5$ for the fourth value. For the fifth value $\varepsilon^n = 1.6 \cdot 10^{-3}$,
the trust-region immediately contracts because it is already close to stationarity for $\varepsilon^n$ and no improvement can be made from last
accepted iterate for the previous value of $\varepsilon^n$. This is closely linked to the fact that we only use convex subproblems and the iterate
is already very close to being binary-valued. Hence the objective of the convex subproblem exhibits a high penalty that is proportional to
$\tfrac{1}{\varepsilon}$ on changes of the nodal values from $0$ to $1$ or vice versa. In other words, the objective gradient points to the
outside of the feasible set, implying the stationarity. A further reduction of $\varepsilon^n$ lead to the same result for this test case.
We also note that handling the coefficients in the trust-region subproblem becomes more difficult for smaller values of $\varepsilon^n$
due to the opposed scaling effect of $\varepsilon$ on the two terms in $E_\varepsilon$.
The running time of the execution of the algorithm was 2 hours and 18 minutes.

While it is difficult to measure instationarity with respect
to \eqref{eq:p} directly, we compute the predicted reduction of the solved 
trust-region subproblems \eqref{eq:tr_cvx} for the reset trust-region 
radius $\Delta^0$ for the initial and final iterate for each of the
values of $\varepsilon^n$ in order to have a surrogate and get some 
impression on the behavior of the algorithm. 
We observe a decreasing (but not monotonic) trend for the initial
predicted reduction over the different values of $\varepsilon^n$.
The predicted reduction drops the first to the second value of $\varepsilon$ by almost three orders of magnitude and
then remains relatively constant over the next two reductions of $\varepsilon$ and then drops almost by another order of magnitude.

As one expects from gradient-based optimization algorithms, the value
of the predicted reduction is much smaller for the final iterate for the 
four values of $\varepsilon$ for which iterates are accepted. We tabulate these values in the \cref{tbl:accepted_iterations_per_varepsilon} too.
\begin{table}[ht]
	\caption{Number of accepted iterations and initial predicted reduction
		for the reset trust-region radius $\Delta^0 = 1.5$ for different
		values of $\varepsilon^n$.}\label{tbl:accepted_iterations_per_varepsilon}
	\begin{center}
		\begin{tabular}{r|ccc}
			\toprule 
			&&
			\multicolumn{2}{c}{Predicted reduction for $\Delta = 1.5$} \\
			$\varepsilon^n$
			& Accepted iterates 
			& initial
			& final
			\\ \midrule 
			\num{1.0e+00}
			& \num{19} 
			& \num{1.162e-02}
			& \num{4.10e-08}
			\\
			\num{2.0e-01}
			& \num{11} 
			& \num{7.687e-06}
			& \num{4.99e-09}
			\\
			\num{4.0e-02}
			& \num{27}
			& \num{1.139e-05}
			& \num{2.62e-09}
			\\ 			
			\num{8.0e-03}
			& \num{5} 
			& \num{8.962e-06}			
			& \num{1.27e-08}
			\\ 		
			\num{1.6e-03}
			& -
			& \num{9.023e-07}			
			& \num{9.023e-07}
			\\ 						
			\bottomrule
		\end{tabular}
	\end{center}
\end{table}
Over the course of the iterations, the binarity of the (accepted) iterates
$w^n$ increases significantly and a sharp interface emerges.
We have visualized this in \cref{fig:sharp_interface_emerges}, where
we display the first and last accepted iterates for each of the values
$\varepsilon^n$.
\begin{figure}
	\centering
	\begin{subfigure}{.25\textwidth}
		\centering
		\includegraphics[width=\linewidth]{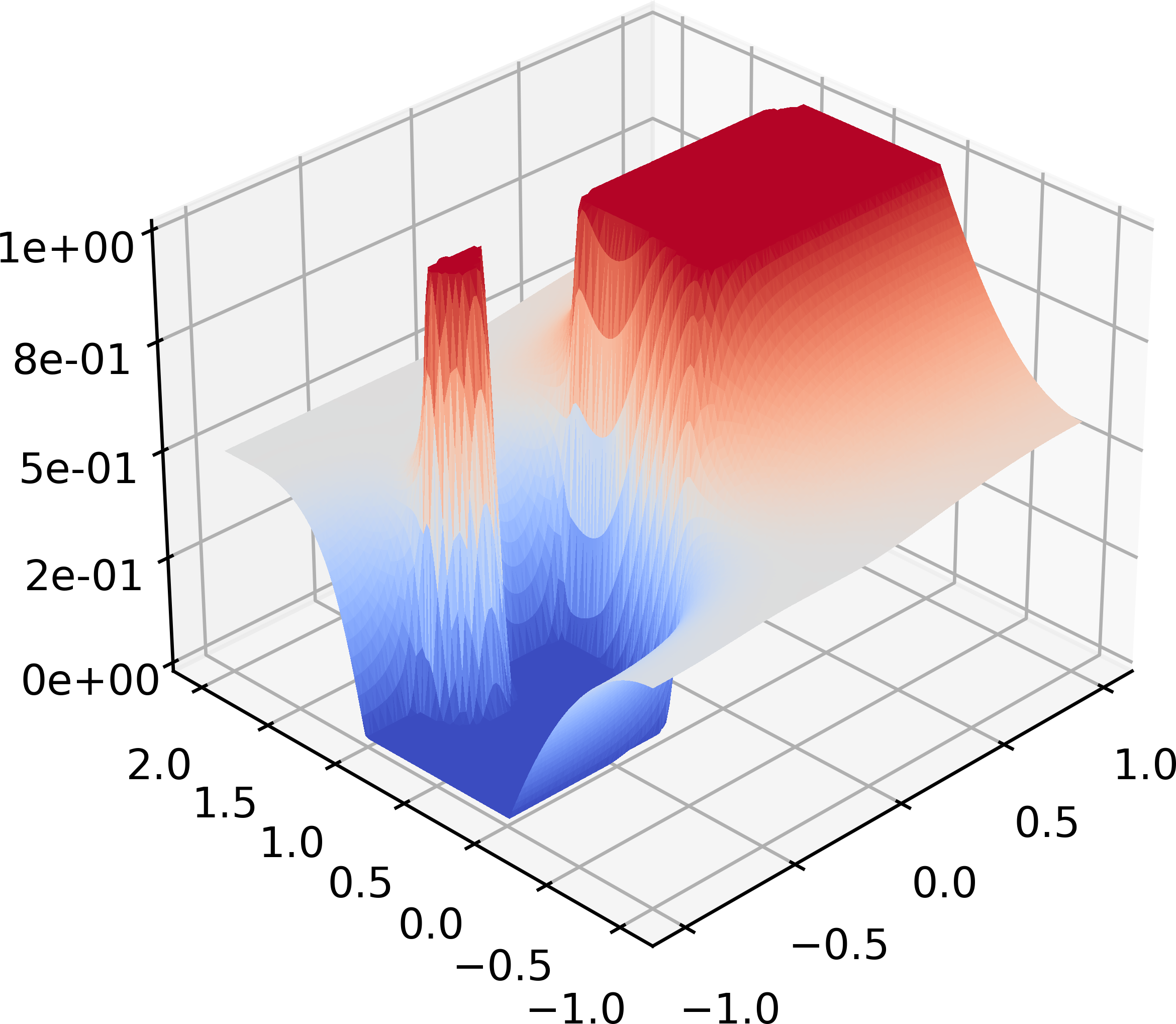}
		\caption{$\varepsilon^n = \num{1.0e+00}$}
	\end{subfigure}%
	\begin{subfigure}{.25\textwidth}
		\centering
		\includegraphics[width=\linewidth]{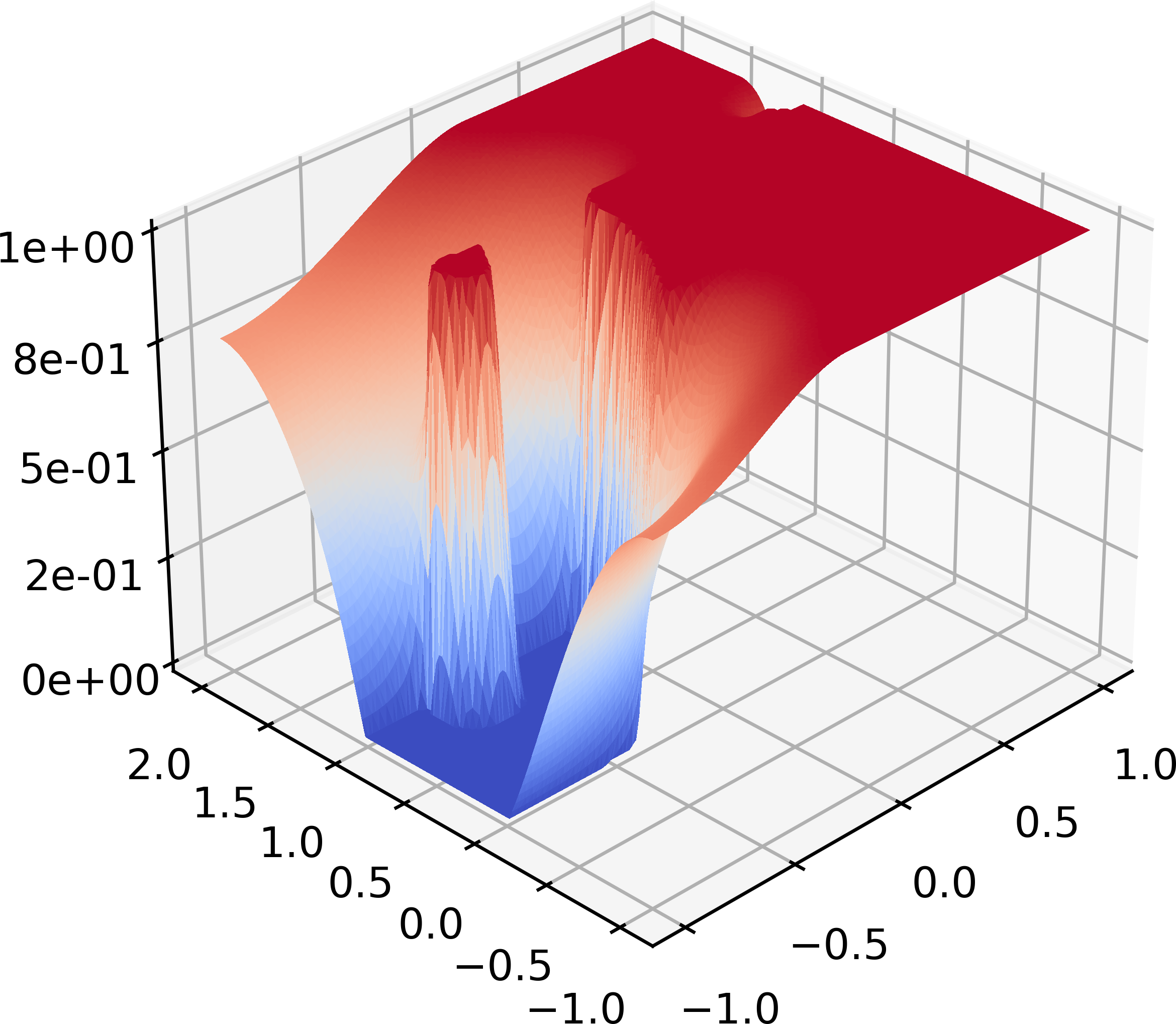}
		\caption{$\varepsilon^n = \num{1.0e+00}$}		
	\end{subfigure}%
	\begin{subfigure}{.25\textwidth}
		\centering
		\includegraphics[width=\linewidth]{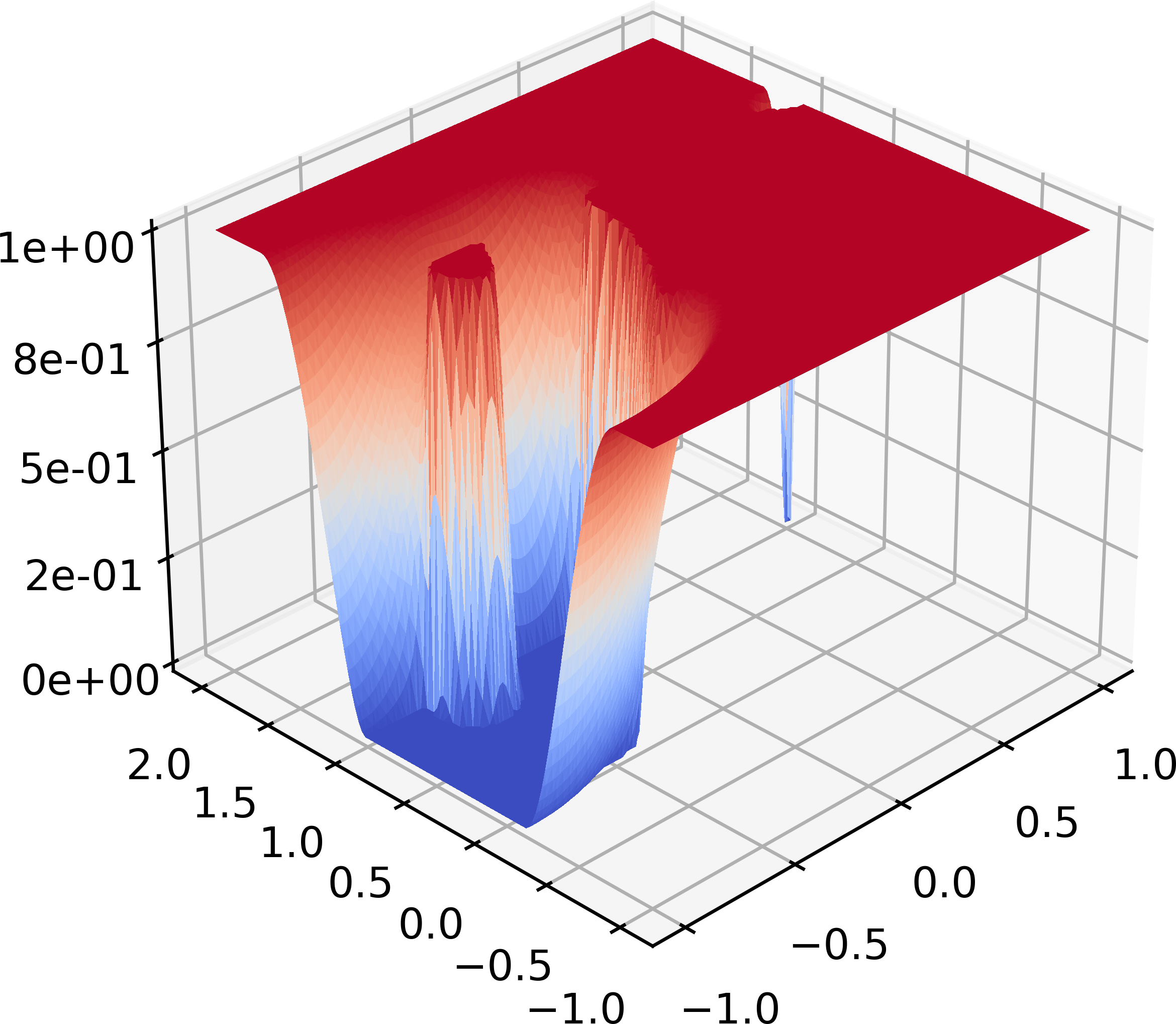}
		\caption{$\varepsilon^n = \num{2.0e-01}$}		
	\end{subfigure}%
	\begin{subfigure}{.25\textwidth}
		\centering
		\includegraphics[width=\linewidth]{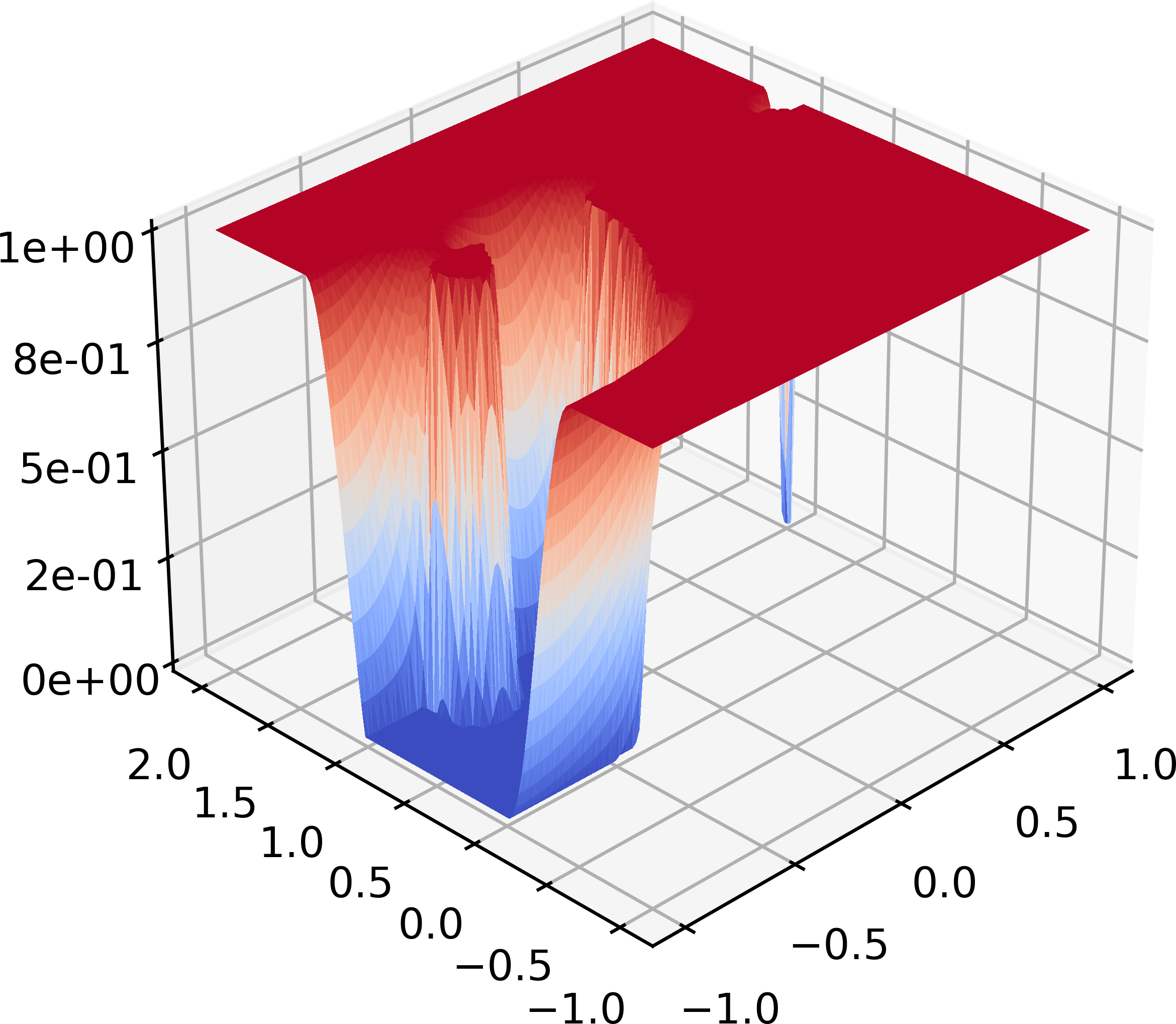}
		\caption{$\varepsilon^n = \num{2.0e-01}$}		
	\end{subfigure}\\
	\begin{subfigure}{.25\textwidth}
		\centering
		\includegraphics[width=\linewidth]{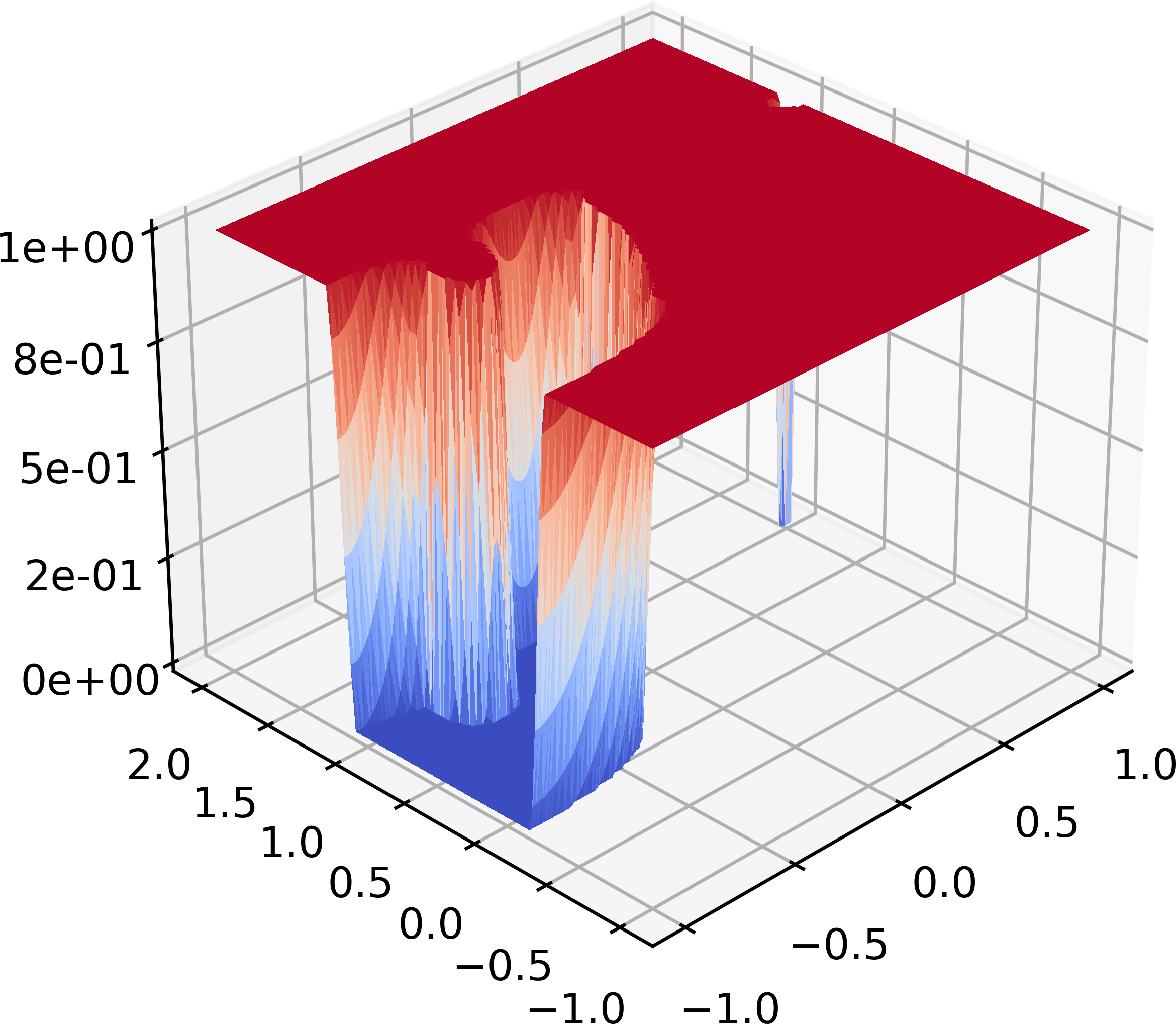}
		\caption{$\varepsilon^n = \num{4.0e-02}$}		
	\end{subfigure}%
	\begin{subfigure}{.25\textwidth}
		\centering
		\includegraphics[width=\linewidth]{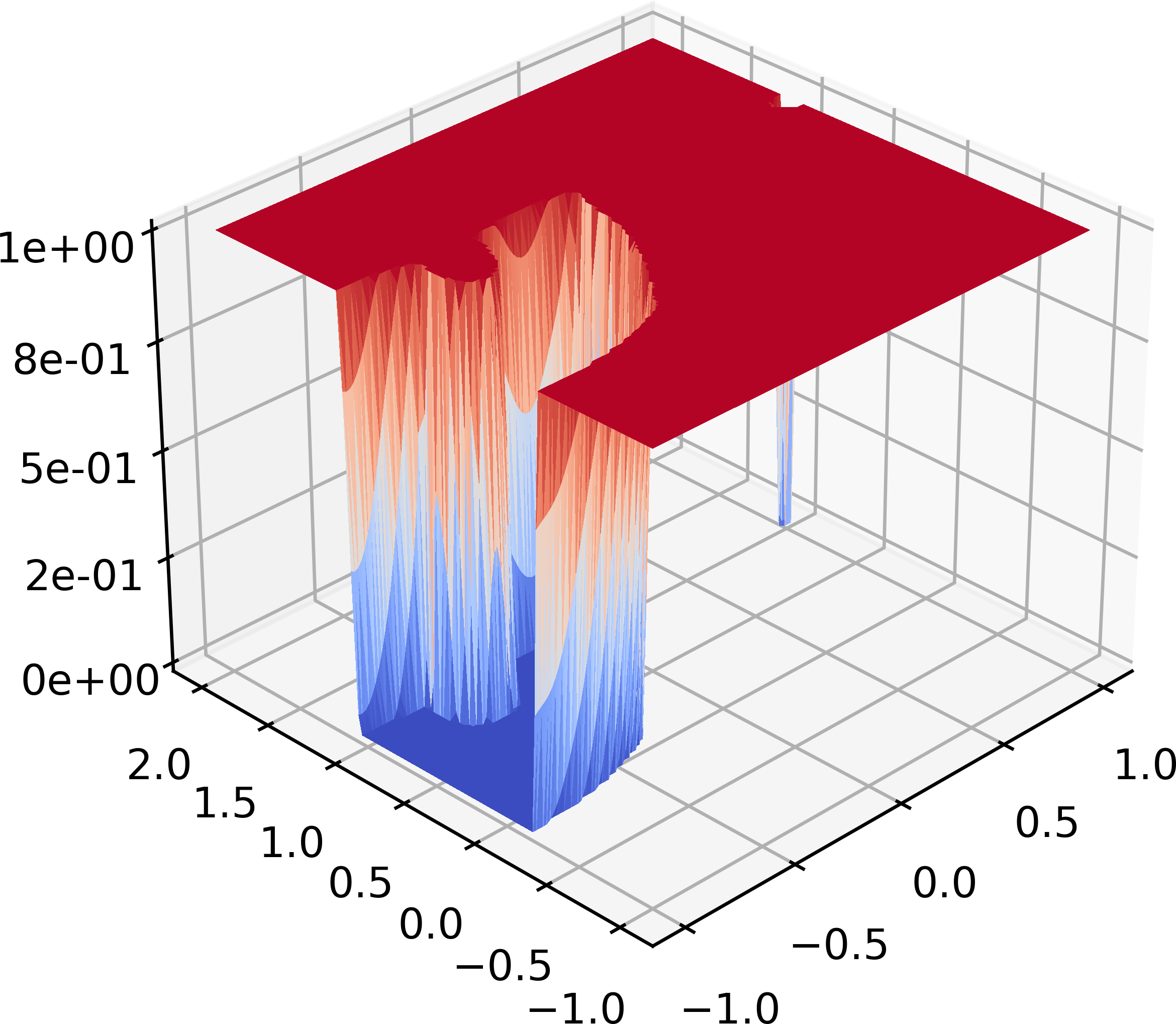}
		\caption{$\varepsilon^n = \num{4.0e-02}$}		
	\end{subfigure}%
	\begin{subfigure}{.25\textwidth}
		\centering
		\includegraphics[width=\linewidth]{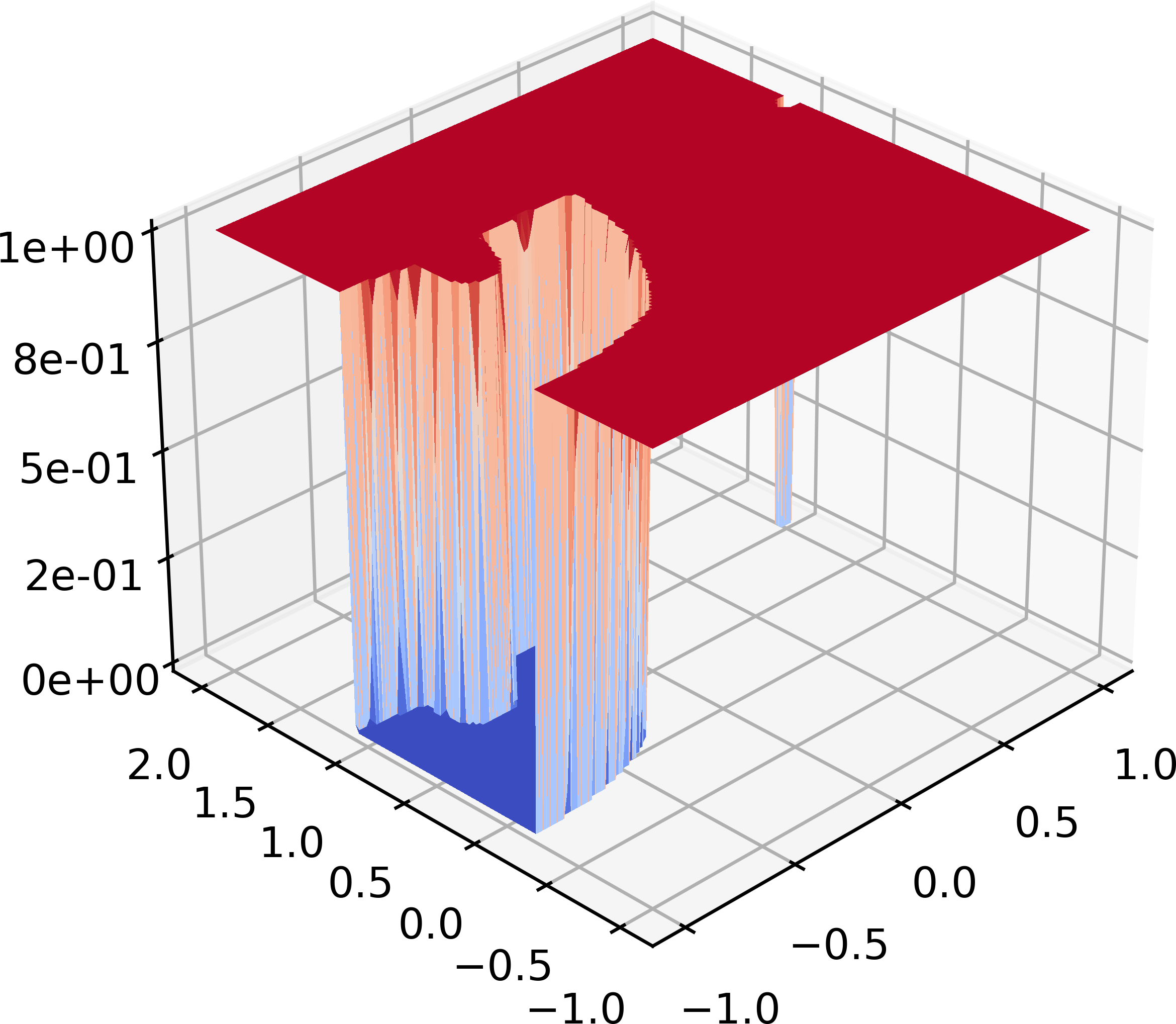}
		\caption{$\varepsilon^n = \num{8.0e-03}$}		
	\end{subfigure}%
	\begin{subfigure}{.25\textwidth}
		\centering
		\includegraphics[width=\linewidth]{iterations/initial_8.00e-03.png}
		\caption{$\varepsilon^n = \num{8.0e-03}$}		
	\end{subfigure}
	\caption{First (a, c, e, g) and last accepted iterate (b, d, f, h) 
		over the decreasing values
		of $\varepsilon^n$.}
	\label{fig:sharp_interface_emerges}
\end{figure}

\section{Conclusion}\label{sec:conclusion}

We have provided an algorithmic phase-field approach to a class of optimal control
problems with a restriction to binary control functions with perimeter regularization.
We have shown that the algorithmic idea of \cite{leyffer2022sequential,manns2022on}
can be  transferred to the phase-field setting and a similar convergence theory
can be obtained, where the L-stationarity of the limits requires an additional geometric 
assumption, see \cref{thm:gamma_to_stat}, which we believe cannot be dropped easily without
replacement and which we interpret as a constraint qualification in an 
analogy to nonlinear programming. Crucially, we have derived a relationship
between the reduction of the interface parameter and the reduction of the lower bound on the trust-region radius that in turn triggers reduction of 
the interface parameter.

As our applications and computational examples demonstrate, this work 
motivates research on several aspects of numerical optimization of
problems of type \eqref{eq:p} by means of the analyzed algorithmic 
framework. We highlight a few of them below.

\Cref{ass:general_var} requires that the reduced objective $j = J \circ S$
is Fr\'{e}chet differentiable as a function from $L^1(\Omega)$ to $\R$.
We cannot satisfy this assumption for our motivating application
and only get differentiability with respect to $L^\infty(\Omega)$ so far,
see \cref{sec:application_wave}.
This is not satisfactory because for any two binary-valued functions
that differ on a set of strictly positive Lebesgue measure,
their $L^\infty(\Omega)$-difference is one. This motivates research
on approximations of the PDE so that the assumption can be satisfied
as well as research on improving the \cref{ass:general_var}
so that differentiability is only required with respect to
some $L^p(\Omega)$, $p > 1$.

A crucial step in our convergence analysis is \cref{thm:tr_gamma_convergence}, in which
we prove $\Gamma$-convergence of the non-convex trust-region subproblems for a fixed
trust-region radius. \Cref{ex:no_gamma_convergence_for_cvx_tr} shows that this result
does not hold in the same way for the convex trust-region subproblems. Therefore, in order
to reduce the  computational burden that is required for convergence guarantees, we find it important to seek alternatives to the statement of \cref{thm:tr_gamma_convergence}
in the convergence analysis that would allow us to work with convex subproblems.
An alternative avenue is of course to develop efficient solution strategies
for the non-convex subproblems. 

\section*{Acknowledgments}
The authors are grateful to Matthias R\"{o}ger (TU Dortmund University) for helpful advice on the construction
in \cref{prp:boundedness_involved}. The authors are also grateful to Annika Schiemann (TU Dortmund University)
for suggesting and guiding the construction and implementation of the exact solution in \cref{sec:source_exact_solution}.
The authors thank two anonymous referees for helpful feedback on the manuscript.

\bibliography{references}{}

\begin{thebibliography}{10}

\bibitem{allaire2021shape}
{\sc G.~Allaire, C.~Dapogny, and F.~Jouve}, {\em Shape and topology
  optimization}, in Handbook of Numerical Analysis, vol.~22, Elsevier, 2021,
  pp.~1--132.

\bibitem{AlnaesEtal2015}
{\sc M.~S. Alnaes, J.~Blechta, J.~Hake, A.~Johansson, B.~Kehlet, A.~Logg,
  C.~Richardson, J.~Ring, M.~E. Rognes, and G.~N. Wells}, {\em The {FEniCS}
  project version 1.5}, Archive of Numerical Software, 3 (2015).

\bibitem{amstutz2022consistent}
{\sc S.~Amstutz, C.~Dapogny, and A.~Ferrer}, {\em A consistent approximation of
  the total perimeter functional for topology optimization algorithms}, ESAIM:
  Control, Optimisation and Calculus of Variations, 28 (2022).

\bibitem{amstutz2012topology}
{\sc S.~Amstutz and N.~Van~Goethem}, {\em Topology optimization methods with
  gradient-free perimeter approximation}, Interfaces and Free Boundaries, 14
  (2012), pp.~401--430.

\bibitem{bartels2012total}
{\sc S.~Bartels}, {\em Total variation minimization with finite elements:
  convergence and iterative solution}, SIAM Journal on Numerical Analysis, 50
  (2012), pp.~1162--1180.

\bibitem{bociu2022input}
{\sc L.~Bociu, P.~Manns, M.~Severitt, and S.~Strikwerda}, {\em Input
  regularization for integer optimal control in bv with applications to control
  of poroelastic and poroviscoelastic systems}, arXiv preprint
  arXiv:2210.09564,  (2022).

\bibitem{bredies2024extremal}
{\sc K.~Bredies, J.~A. Iglesias, and D.~Walter}, {\em On extremal points for
  some vectorial total variation seminorms}, arXiv preprint arXiv:2404.12831,
  (2024).

\bibitem{burger2012exact}
{\sc M.~Burger, Y.~Dong, and M.~Hinterm{\"u}ller}, {\em Exact relaxation for
  classes of minimization problems with binary constraints}, arXiv preprint
  arXiv:1210.7507,  (2012).

\bibitem{caillaud2022error}
{\sc C.~Caillaud and A.~Chambolle}, {\em Error estimates for finite differences
  approximations of the total variation}, IMA Journal of Numerical Analysis,
  (2022).

\bibitem{casas2019analysis}
{\sc E.~Casas and K.~Kunisch}, {\em Analysis of optimal control problems of
  semilinear elliptic equations by bv-functions}, Set-Valued and Variational
  Analysis, 27 (2019), pp.~355--379.

\bibitem{chambolle2021approximating}
{\sc A.~Chambolle and T.~Pock}, {\em Approximating the total variation with
  finite differences or finite elements}, in Handbook of Numerical Analysis,
  vol.~22, Elsevier, 2021, pp.~383--417.

\bibitem{clason2021optimal}
{\sc C.~Clason, K.~Kunisch, and P.~Trautmann}, {\em Optimal control of the
  principal coefficient in a scalar wave equation}, Applied Mathematics \&
  Optimization, 84 (2021), pp.~2889--2921.

\bibitem{dautray1992evolution}
{\sc R.~Dautray and J.-L. Lions}, {\em Evolution problems {I}, volume 5 of
  mathematical analysis and numerical methods for science and technology},
  1992.

\bibitem{evans2010partial}
{\sc L.~C. Evans}, {\em Partial Differential Equations}, vol.~2, Graduate
  Studies in Mathematics, AMS, 2010.

\bibitem{fonseca1989gradient}
{\sc I.~Fonseca and L.~Tartar}, {\em The gradient theory of phase transitions
  for systems with two potential wells}, Proceedings of the Royal Society of
  Edinburgh Section A: Mathematics, 111 (1989), pp.~89--102.

\bibitem{garcke2022phase}
{\sc H.~Garcke, S.~Mitra, and V.~Nikoli\'c}, {\em A phase-field approach to
  shape and topology optimization of acoustic waves in dissipative media}, SIAM
  Journal on Control and Optimization, 60 (2022), pp.~2297--2319.

\bibitem{groger1989aw}
{\sc K.~Gr{\"o}ger}, {\em A ${W}^{1,p}$-estimate for solutions to mixed
  boundary value problems for second order elliptic differential equations},
  Mathematische Annalen, 283 (1989), pp.~679--687.

\bibitem{Gurobi}
{\sc {Gurobi Optimization, LLC}}, {\em {Gurobi Optimizer Reference Manual}},
  2023.

\bibitem{hu2022adaptive}
{\sc X.~Hu, Z.~Li, R.~Bao, W.~Chen, and H.~Wang}, {\em An adaptive method of
  moving asymptotes for topology optimization based on the trust region},
  Computer Methods in Applied Mechanics and Engineering, 393 (2022), p.~114202.

\bibitem{kennedy2005high}
{\sc J.~E. Kennedy}, {\em High-intensity focused ultrasound in the treatment of
  solid tumours}, Nature reviews cancer, 5 (2005), pp.~321--327.

\bibitem{leyffer2022sequential}
{\sc S.~Leyffer and P.~Manns}, {\em Sequential linear integer programming for
  integer optimal control with total variation regularization}, ESAIM: Control,
  Optimisation and Calculus of Variations, 28 (2022), p.~66.

\bibitem{maggi2012sets}
{\sc F.~Maggi}, {\em Sets of finite perimeter and geometric variational
  problems: an introduction to Geometric Measure Theory}, no.~135, Cambridge
  University Press, 2012.

\bibitem{manns2022on}
{\sc P.~Manns and A.~Schiemann}, {\em On integer optimal control with total
  variation regularization on multi-dimensional domains}, SIAM Journal on
  Control and Optimization, 61 (2023).

\bibitem{manzoni2021optimal}
{\sc A.~Manzoni, A.~Quarteroni, and S.~Salsa}, {\em Optimal control of partial
  differential equations}, Springer, 2021.

\bibitem{marko2022integer}
{\sc J.~Marko and G.~Wachsmuth}, {\em Integer optimal control problems with
  total variation regularization: Optimality conditions and fast solution of
  subproblems}, ESAIM: Control, Optimisation and Calculus of Variations, 29
  (2023), p.~81.

\bibitem{sympy}
{\sc A.~Meurer, C.~P. Smith, M.~Paprocki, O.~\v{C}ert\'{i}k, S.~B. Kirpichev,
  M.~Rocklin, A.~Kumar, S.~Ivanov, J.~K. Moore, S.~Singh, T.~Rathnayake,
  S.~Vig, B.~E. Granger, R.~P. Muller, F.~Bonazzi, H.~Gupta, S.~Vats,
  F.~Johansson, F.~Pedregosa, M.~J. Curry, A.~R. Terrel, v.~Rou\v{c}ka,
  A.~Saboo, I.~Fernando, S.~Kulal, R.~Cimrman, and A.~Scopatz}, {\em Sympy:
  symbolic computing in python}, PeerJ Computer Science, 3 (2017), p.~e103.

\bibitem{schiemann2024regularization}
{\sc C.~Meyer and A.~Schiemann}, {\em Dual regularization and outer
  approximation of optimal control problems in {BV}}, submitted,  (2024).

\bibitem{modica1987gradient}
{\sc L.~Modica}, {\em The gradient theory of phase transitions and the minimal
  interface criterion}, Archive for Rational Mechanics and Analysis, 98 (1987),
  pp.~123--142.

\bibitem{modica5esempio}
{\sc L.~Modica and S.~Mortola}, {\em Un esempio di {$\Gamma$}-convergenca},
  Bull. Un. Mat. Ital.(5),  (1977), pp.~285--299.

\bibitem{pardalos1991quadratic}
{\sc P.~M. Pardalos and S.~A. Vavasis}, {\em Quadratic programming with one
  negative eigenvalue is np-hard}, Journal of Global optimization, 1 (1991),
  pp.~15--22.

\bibitem{BasixJoss}
{\sc M.~W. Scroggs, I.~A. Baratta, C.~N. Richardson, and G.~N. Wells}, {\em
  Basix: a runtime finite element basis evaluation library}, Journal of Open
  Source Software, 7 (2022), p.~3982.

\bibitem{ScroggsEtal2022}
{\sc M.~W. Scroggs, J.~S. Dokken, C.~N. Richardson, and G.~N. Wells}, {\em
  Construction of arbitrary order finite element degree-of-freedom maps on
  polygonal and polyhedral cell meshes}, ACM Transactions on Mathematical
  Software, 48 (2022), pp.~{18:1--18:23}.

\bibitem{simader1972dirichlet}
{\sc C.~G. Simader}, {\em On Dirichlet's Boundary Value Problem: $L^P$-Theory
  Based on a Generalization of Garding's Inequality}, vol.~268 of Lecture Notes
  in Mathematics, Springer, 1972.

\bibitem{szabo2004diagnostic}
{\sc T.~L. Szabo}, {\em Diagnostic ultrasound imaging: inside out}, Academic
  press, 2004.

\bibitem{tran2017shape}
{\sc Q.~D. Tran, G.-W. Jang, H.-S. Kwon, and W.-H. Cho}, {\em Shape and
  topology optimization of acoustic lens system using phase field method},
  Structural and Multidisciplinary Optimization, 56 (2017), pp.~713--729.

\bibitem{yan2021discrete}
{\sc X.~Y. Yan, Y.~Liang, and G.~D. Cheng}, {\em Discrete variable topology
  optimization for simplified convective heat transfer via sequential
  approximate integer programming with trust-region}, International Journal for
  Numerical Methods in Engineering, 122 (2021), pp.~5844--5872.

\end{thebibliography}
\bibliographystyle{siam} 
\end{document}